\documentclass[11pt,a4paper]{article}

\usepackage[T1]{fontenc} 
\usepackage[ocgcolorlinks,colorlinks=true,urlcolor=blue]{hyperref}
\usepackage[usenames,dvipsnames]{xcolor}
\usepackage[a4paper]{geometry}
\usepackage{amsmath,amssymb,amsthm,mathrsfs}
\usepackage{clrscode3e}
\usepackage{bbm}
\usepackage{times}
\usepackage{paralist}
\usepackage[numbers,sort&compress]{natbib}
\usepackage{graphicx}
\usepackage{framed}

\graphicspath{{Fig}{.}}

\DeclareGraphicsRule{.pdftex}{pdf}{.pdftex}{}


\newcommand{\red}{\color{red}}

\usepackage[normalem]{ulem}

\DeclareSymbolFont{bbold}{U}{bbold}{m}{n}
\DeclareSymbolFontAlphabet{\mathbbold}{bbold}

\newcommand{\pth}[1]{\ensuremath{\left(#1\right)}}
\newcommand{\sites}{\mathbf{X}}
\newcommand{\ppp}{\mathbf{\Phi}}
\newcommand{\DT}{\operatorname{DT}}

\newcommand{\reals}{\mathbb{R}}
\newcommand{\neighbours}{\mathbf{N}}

\newcommand{\cone}{\operatorname{Cone}}
\newcommand{\disc}{\operatorname{Disc}}
\newcommand{\elli}{\operatorname{Ell}}
\newcommand{\prob}{\mathbb{P}}
\newcommand{\expected}{\mathbb{E}}

\newcommand{\Po}{\operatorname{Po}}
\newcommand{\I}[1]{\mathbf{1}_{\{#1\}}}
\newcommand{\bX}{\mathbf X}
 
\newcommand{\cD}{\mathcal D} 
\newcommand{\cF}{\mathcal F}

\newcommand{\cQ}{\mathcal Q}

\newcommand{\cS}{\mathcal S}
\newcommand{\cA}{\mathcal A}
\newcommand{\cR}{\mathcal R}

\newcommand{\sC}{\mathscr C} 
\newcommand{\bbR}{\mathbb R}
\newcommand{\p}{\prob}
\newcommand{\e}{\expected}
\newcommand{\var}{\mathbb{V}}

\newcommand{\Wo}{\ensuremath W^\odot}
\newcommand{\expo}[1]{\exp\left(#1\right)}

\newtheorem{theorem}{Theorem}
\newtheorem{lemma}[theorem]{Lemma}

\newtheorem{proposition}[theorem]{Proposition}
\newtheorem{corollary}[theorem]{Corollary}

\theoremstyle{remark}
\newtheorem{remark}{Remark}

\author{
    Nicolas Broutin
        \thanks{Projet RAP, INRIA Paris - Rocquencourt}
    \and Olivier Devillers
        \thanks{Projet Geometrica, INRIA Sophia Antipolis - Méditerranée}
    \and Ross Hemsley
        \thanks{Projet Geometrica, INRIA Sophia Antipolis - Méditerranée}
}

\title{
Efficiently Navigating a Random\\ Delaunay Triangulation
\thanks{The work in this paper has been partially supported by ANR blanc 
        PRESAGE (ANR-11-BS02-003)}
}

\date{\today}

\begin{document}

\maketitle
\begin{abstract}
  Planar graph navigation is an important problem with significant implications
  to both point location in geometric data structures and routing in networks.
  However, whilst a number of algorithms and existence proofs have been
  proposed, very little analysis is available for the properties of the paths
  generated and the computational resources required to generate them under a
  random distribution hypothesis for the input. In this paper we analyse a
  new deterministic planar navigation algorithm with constant competitiveness
  which follows vertex adjacencies in the Delaunay triangulation. We call this
  strategy \emph{cone walk}. We prove that given $n$ uniform points in a smooth
  convex domain of unit area, and for any start point $z$ and query point $q$;
  cone walk applied to $z$ and $q$ will access at most $O(|zq|\sqrt{n} +\log^7
  n)$ sites with complexity $O(|zq|\sqrt{n} \log \log n + \log^7 n)$ with
  probability tending to 1 as $n$ goes to infinity. We additionally show that in
  this model, cone walk is $(\log ^{3+\xi} n)$-memoryless with high probability
  for any pair of start and query point in the domain, for any positive $\xi$.
  We take special care throughout to ensure our bounds are valid even when the
  query points are arbitrarily close to the border.
\end{abstract}




\section{Introduction}\label{sec:intro}
Given a planar embedding of a graph $G=(V,E)$, a source node $z\in V$ and a
destination point $q\in \reals^2$, we consider the \emph{planar graph
navigation} problem of finding a route in $G$ from $z$ to the nearest neighbour
of $q$ in $V$. In particular, we assume that any vertex $v\in V$ may access its
coordinates in $\reals^2$ with a constant time query. The importance of this
problem is two-fold. On the one hand, finding a short path between two nodes in
a network is currently a very active area of research in the context of routing
in networks~\cite{zg-wsnip-04,bb-sgwn-09, MiWoMi2009a}. On the other, the
problem of locating a face containing a point in a convex subdivision (point
location) is an important sub-routine in many algorithms manipulating geometric
data structures~\cite{l-scsi-77,dpt-wt-02,geometrica-7322i,Mucke:1996aa}. A
number of algorithms have been proposed within each of these fields, many of
which are in fact equivalent. It seems the majority of the literature in these
areas is concerned with the existence of algorithms which always succeed under
different types of constraints, such as the \emph{competitiveness} of the
algorithm, or the class of network. Apart from worst-case bounds, very little is
known concerning the properties of the path lengths and running times for these
algorithms under random distribution hypotheses for the input vertices. In this
paper we aim to bridge the gap between these two fields by giving and analysing
an algorithm that is provably efficient within both of these contexts when
the underlying graph is the Delaunay triangulation.

\subsection{Definitions}
In the following, we define the \emph{competitiveness} of an algorithm to be the
worst case ratio between the length of the path generated by the algorithm and
the Euclidean distance between the source and the destination. Thus
competitiveness may depend on the class of graphs one allows, but not the pair
of source and destination. Let $\neighbours_d(v)$ denote the set of neighbours
of $v$ within $d$ hops of $v$. We shall sometimes refer to the $d$'th
neighbourhood of a set $X$, to denote the set of all sites that can be accessed
from a site in $X$ with fewer than $d$ hops. We call an algorithm
\emph{$c$-memoryless} if at each step in the navigation, it only has access to
the destination $q$, the current vertex $v$ and $\neighbours_c(v)$. Some authors
use the term \emph{online} to refer to an algorithm that only has access to $q$,
the current vertex $v$, $\neighbours_1(v)$ and $O(1)$ words of memory which may
be used to store information about the history of the navigation. Finally, an
algorithm may be either deterministic or randomised. We define a randomised
algorithm to be an algorithm that has access to a random oracle at each step.

\subsection{Previous results}
\paragraph{Graph Navigation for Point location.}
The problem of point location is most often studied in the context of
triangulations and the algorithms are referred to as \emph{walking
algorithms}~\cite{dpt-wt-02}. A walking algorithm may work by following edges
or by following incidences between neighbouring faces, which is equivalent to a
navigation in the dual graph. There are three main algorithms that have
received attention in the literature: \emph{straight walk}, which is a walk
that visits all triangles crossed by the line segment $zq$; \emph{greedy vertex
walk}, which always chooses the vertex in $\neighbours_1(v)$ which is closest
to $q$ and \emph{visibility walk} which walks to an adjacent triangle if and
only if it shares the same half-space as $q$ relative to the shared edge. It is
known that these algorithms always terminate if the underlying triangulation is
Delaunay~\cite{dpt-wt-02}.

The aim is generally to analyse the expected number of steps that the algorithm
requires to reach the destination under a given distribution hypothesis. Such
an analysis has only been provided by \citet{dlm-etadp-04} who succeeded in
showing that straight walk reaches the destination after $O(\|zq\|\sqrt{n}\,)$
steps in expectation, for $n$ random points in the unit square (here and from
now on we shall use $\|\cdot \|$ to denote the Euclidean distance).\footnote{
    Zhu also provides a tentative $O\left(\sqrt{n\log n}\,\right)$ bound for
    visibility walk~\cite{z-lowrd-03}. This is a proof by induction for ``a
    random edge at distance $d$''. It considers the next edge in the walk and
    applies an induction hypothesis to try and bound the progress.
    Unfortunately, the new edge cannot be considered as random: each edge that
    is visited has been chosen by the algorithm and the edges do not all have
    the same probability to be chosen at each step in the walk. Restarting the
    walk from a given edge is not possible either (as done in
    \cite{DeMuZh1998a}), since the knowledge that an edge is a Delaunay edge
    influences the local point distribution.
}
The analysis in this case is facilitated since it is possible to compute the
probability that a triangle is part of the walk without looking at the other
vertices. Straight walk is online, but not memoryless since at every step the
algorithm must know the location of the source point, $z$. It is also rarely
used in practice since it is usually outperformed empirically by one of the
remaining two algorithms, \emph{visibility walk} or \emph{greedy vertex} walk,
which are both $1$-memoryless~\cite{dpt-wt-02}. The complex dependence between
the steps of the algorithm in these cases makes the analysis difficult, and it
remains an important open question to provide an analysis for either of these
two algorithms.

\paragraph{Graph Navigation for Routing.}
In the context of packet routing in a network, each vertex represents a node
which knows its approximate location and can communicate with a selected set of
neighbouring nodes. One example is in wireless networks where a node
communicates with all devices within its communication range. In such cases, it
is often convenient for the nodes to agree on a communication protocol such
that the graph of directly communicating nodes is planar, since this can make
routing more efficient. Triangulations have been used in this context due to
their ability to act as spanners (the length of shortest paths in the graph,
seen as curves in $\bbR^2$, do not exceed the Euclidean distance by more than a
constant factor), and methods exist to locally construct the full Delaunay
triangulation, given some conditions on the point
distribution~\cite{Wang:2007:EDL:1269960.1269963,Li:2003:LDT:2328757.2328820,
Kozma04geometricallyaware}.

Commonly referenced algorithms in this field are: \emph{greedy routing}, which
is the same algorithm as \emph{greedy vertex walk}, given in the context of
point location; \emph{compass routing} which is similar to greedy, except that
instead of choosing the point in $\neighbours_1(v)$ minimising the distance to
$q$, it chooses the point in $x\in \neighbours_1(v)$ minimising the angle
$\angle q,v,x$, and also \emph{face routing} which is a generalisation of
straight walk that can be applied to any planar graph. In this context, overall
computation time is usually considered less important than trying to construct
algorithms that find short paths in a given network topology under certain
memory constraints. We give a brief overview of results
relating to this work.

\citet{BoBrCaDe2002a} demonstrated that it is not possible to construct a
deterministic memoryless algorithm that finds a path with constant
competitiveness in an arbitrary triangulation. They also demonstrated by
counter example that neither greedy routing, nor compass routing is
$O(1)$-competitive on the Delaunay triangulation~\cite{BoMo2004a}.
\citet{DBLP:journals/tcs/BoseM04} went on to show that there does, however,
exist an online $c$-competitive algorithm that works on any graph satisfying a
property they refer to as the `diamond property', which is satisfied by Delaunay
triangulations. They show this by providing an algorithm which is essentially a
modified version of the straight walk. Bose and Morin also show that there is no
algorithm that is competitive for the Delaunay triangulation under the link
length (the link length is the number of edges visited by the
algorithm)~\cite{BoBrCaDe2002a}.

In terms of time analysis, it appears the only relevant results are those by
\citet{dlm-etadp-04}, where their results correspond with that of the straight
walk, and those by \citet{Chen2012178}, who show that no routing algorithm is
asymptotically better than a random walk when the underlying graph is an
arbitrary convex subdivision.

\paragraph{Navigation in the Plane.} We briefly remark that for the related
problem of navigation in the plane, several probabilistic results exist; for
example \cite{BoMa2011a} and \cite{Bordenave2008}. In this context, the input
is a set of vertices in the plane along with an oracle that can compute the
next step given the current step and the destination in $O(1)$ time. Although
the steps are also dependent in these cases, the case of Delaunay
triangulations we treat here is more delicate because of the geometry of the
region of dependence implied by the Delaunay property.

\subsection{Contributions}
In this paper we give a new deterministic planar graph navigation algorithm
which we call \emph{cone walk} that succeeds on any Delaunay triangulation and
produces a path which is $3.7$-competitive. We briefly underline the fact that
our algorithm has been designed for theoretical demonstration, and we do not
claim that it would be faster in a practical sense than, for example,
\emph{greedy routing} or \emph{face routing}. On the other hand, direct
comparisons would perhaps be unfair, since \emph{greedy routing} is not
$O(1)$-competitive on the Delaunay triangulation~\cite{BoMo2004a} whereas we
prove that cone walk is; and \emph{face routing} is not memoryless in any sense,
whilst cone walk is localised in the sense given by Theorem~\ref{thm:local}. In
the theorems that follow, we characterise the asymptotic properties of the cone
walk algorithm applied to a random input.

Let $\cD$ be a smooth convex domain of the plane with area $1$, and write
$\cD_n=\sqrt{n}\cD$ for its scaling to area $n$. For $x,y\in \cD$, let $\|xy\|$
denote the Euclidean distance between $x$ and $y$. Under the hypothesis that the
input is the Delaunay triangulation of $n$ points uniformly distributed in a
convex domain of unit area, we prove that, for any $\varepsilon>0$, our
algorithm is $O(\log^{1+\varepsilon} n)$-memoryless with probability tending to
one. In the case of cone walk, this is equivalent to bounding the number of
neighbourhoods that might be accessed during a step, which we deal with in the
following theorem.

\begin{theorem}\label{thm:local}
Let $\bX_n:=\{X_1,X_2,\dots, X_n\}$ be a collection of $n$ independent uniformly
random points in $\cD_n$. For $z\in \bX_n$ and $q\in \cD_n$, let $M(z,q)$ be the
maximum number of neighbourhoods needed to compute every step of the walk. Then,
for every $\epsilon>0$, $$\p\Big(\exists z \in \bX_n, q\in
\cD_n: M(z,q) > \log^{3+\epsilon} n \Big) \le \frac 1 n.$$ In particular, as
$n\to\infty$, $\e[\sup_{z\in \bX_n, q\in \cD_n} M(z,q)] = O(\log^{3+\epsilon}
n)$, for every $\epsilon>0$.

\end{theorem}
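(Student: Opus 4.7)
The plan is to reduce the bound on $M(z,q)$ to two quantities that can be controlled uniformly with high probability: the Euclidean radius $\rho(v)$ of the region that a single cone walk step at vertex $v$ must inspect, and the number of Delaunay hops from $v$ required to reach every site in that region.

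First I would argue that there is an absolute constant $C_0$ such that every site of $\bX_n$ used in computing the step at $v$ lies in the ball $B(v,C_0\rho(v))$, where $\rho(v)$ can be taken to be the maximum radius of an empty Delaunay disc having $v$ on its boundary. This follows directly from the definition of cone walk: the next vertex is picked from the Delaunay structure inside a fixed family of cones of apex $v$ directed toward $q$, so only the Delaunay neighbours of $v$ inside those cones, together with the empty discs certifying them, enter the computation. In particular, every Delaunay edge incident to $v$ has length at most $2\rho(v)$.

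Second I would give a uniform high-probability upper bound on $\rho(v)$: the probability that a fixed disc of radius $r$ in $\cD_n$ contains no site is at most $\expo{-\pi r^2}$, so choosing $\pi r^2=(3+\epsilon/2)\log n$ and union-bounding over an $O(n^2)$-sized grid of candidate disc centres, with a boundary correction coming from the smoothness of $\partial\cD$, gives $\p(\max_{v\in\bX_n}\rho(v)>r)=o(1/n)$. Combined with the first step, this shows that on the same event every step at every vertex $v$ can be computed from the sites of $\bX_n$ inside a Euclidean ball of radius $O(\sqrt{\log n})$ around $v$, and a Chernoff bound implies that this ball contains $O(\log n)$ sites, uniformly in $v$.

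Finally I would convert these Euclidean estimates into a bound on the smallest $d$ for which $\neighbours_d(v)$ suffices. Using that each Delaunay edge incident to $v$ has length at most $O(\sqrt{\log n})$, together with the $t$-spanner property of the Delaunay triangulation, every site of the relevant ball is reachable from $v$ by a Delaunay path whose vertices all lie in a slightly enlarged ball containing $O(\log n)$ sites, so a polylogarithmic number of hops suffices per step. A final union bound over the $O(|zq|\sqrt n+\log^7 n)=O(n)$ steps of the walk and over a sufficiently fine discretisation of $\cD_n$ to handle the supremum over $q$ yields $\p(\sup_{z,q} M(z,q)>\log^{3+\epsilon}n)\le 1/n$, and the expectation bound follows from the deterministic estimate $M(z,q)\le n$. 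The main obstacle is actually the first step: although cone walk is deterministic, the set of sites consulted at each vertex depends intricately on the cone geometry and the local Delaunay structure, and dominating it by a single random variable tractable via Poisson empty-disc methods is delicate; once this encoding is in place, the remaining arguments are routine empty-ball estimates, Chernoff bounds, and a planar graph diameter argument.
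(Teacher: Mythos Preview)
Your first step rests on a misreading of the algorithm. A cone walk step at $Z_i$ does \emph{not} only consult the Delaunay neighbours of $Z_i$ inside a cone: the search disc $\disc(Z_i,q,R_i)$ is grown until the \emph{cone} first contains a site, and along the way the algorithm picks up all intermediate vertices in the disc and their Delaunay neighbours. The intermediate vertices need not be Delaunay neighbours of $Z_i$, and their neighbours may lie well outside any ball controlled by $\rho(Z_i)$. Moreover, the radius $R_i$ is fixed by the emptiness of the cone, which does not give you an empty disc \emph{through} $Z_i$, so the inequality $R_i\le C_0\rho(Z_i)$ is not available from the definition. You flag this step as ``the main obstacle'' and then move on; as written, it is a genuine gap, not a technicality.

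The paper avoids this encoding problem entirely. It bounds $M_{\max}$ crudely by $\tau_{\max}\cdot\Delta_{\ppp}$, where $\tau_{\max}$ is the maximum number of sites in any search disc over all steps of all walks, and $\Delta_{\ppp}$ is the maximum Delaunay degree. The bound on $\tau_{\max}$ comes from two \emph{global} events that do not depend on $(z,q)$: a large search cone forces a large empty ball in $\cD_n$ (Lemma~\ref{lem:largest_empty_disc}), and no ball of polylog radius contains too many sites. The bound $\Delta_{\ppp}\le \log^{2+\xi}n$ is proved separately (Appendix), and it is here that the boundary of $\cD_n$ bites: near $\partial\cD_n$ the degree can be much larger than the $\Theta(\log n/\log\log n)$ one gets in the full plane, which is precisely why the final exponent is $3+\epsilon$ rather than the $1+\epsilon$ you are implicitly aiming at. Your sketch ignores these border effects.

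Two further points. Your step~4 (spanner argument) is unnecessary: the path-finding lemma (Lemma~\ref{lem:intermediate_steps}) already shows that the intermediate vertices at step $i$ form a Delaunay-connected set containing $Z_i$, so the hop count is at most $\tau_i+1$ directly. And your step~5 is misplaced: because $\tau_{\max}$ and $\Delta_{\ppp}$ are controlled by events on the point process that are independent of the walk, no union bound over destinations is needed for this theorem; the discretisation machinery (the invariance lemma and the cell-area bounds) is required for Theorem~\ref{th:main-th}, not here.
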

Also with probability close to one, we show that the path length, the number
of edges and the number of vertices accessed are $O(\,\|zq\|+\log^6
n\,)$ for any pair of points in the domain. We formalise these properties in the
following theorem.
\begin{theorem}\label{th:main-th}
Let $\bX_n:=\{X_1,X_2,\dots, X_n\}$ be a collection of $n$ independent uniformly
random points in $\cD_n$. Let $\Gamma(z,q)$ denote either the Euclidean length
of the path generated by the cone walk from $z\in \bX_n$ to $q\in \cD_n$, its
number of edges, or the number of vertices accessed by the algorithm when
generating it. Then there exist constants $C_{\Gamma,\cD}$ depending only on
$\Gamma$ and on the shape of $\cD$ such that, for all $n$ large enough,
\begin{align*}
\p\bigg(\exists z\in \bX_n,\, q\in \cD_n~:~\Gamma(z,q) 
>
C_{\Gamma,\cD} \cdot \|zq\| + 4(1+\sqrt{\|zq\|})\log^6 n\,\bigg)
&\le \frac 1 n.
\end{align*}
In particular, as $n\to\infty$,
$$
\e\bigg[\sup_{z\in \bX_n,\, q\in \cD_n} \Gamma(z,q)\bigg] = O( \sqrt{n}\,).
$$
\end{theorem}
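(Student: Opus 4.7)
The plan is to reduce all three quantities to a bound on the number of cone-walk steps $N(z,q)$. For the Euclidean length, the $3.7$-competitiveness of cone walk stated in the introduction yields $\mathrm{length} \le 3.7\,\|zq\|$ deterministically once we add the at most $O(\log n)$ distance from $q$ to its nearest neighbour (w.h.p.\ in a Poisson-like sample on $\cD_n$), which is absorbed into the additive $\log^6 n$ term. For the number of vertices accessed, I would multiply $N(z,q)$ by the localisation bound of Theorem~\ref{thm:local}, which gives at most $\log^{3+\epsilon} n$ neighbourhoods per step with probability $\ge 1-1/n$; combined with the edge bound below this produces the stated $\log^6 n$ term for a suitable choice of $\epsilon$.

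To control $N(z,q)$ I would analyse the one-step progress $\Delta_i$, defined as the reduction of the distance to $q$ at step $i$. In the bulk of $\cD_n$, the next vertex chosen by cone walk depends only on the point process inside a forward cone anchored at the current vertex, so conditionally on the past the law of $\Delta_i$ has strictly positive mean $c_\cD > 0$ and sub-exponential tails (typical Delaunay edges have constant length, with $O(\log n)$ holding w.h.p.). Viewing the sum $\sum_i \Delta_i$ as a martingale with respect to the natural filtration and applying a Bernstein-type inequality then yields that after $k$ steps the total progress exceeds $c_\cD k - C\sqrt{k}\,\log^3 n$ with probability $1-n^{-\omega(1)}$. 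Inverting this gives $N(z,q) \le C'\|zq\| + C''\sqrt{\|zq\|}\,\log^3 n$ on the same event, which combined with Theorem~\ref{thm:local} recovers the announced shape of the bound.

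The main obstacle is the treatment of $\partial \cD_n$: near the boundary the forward cone may be clipped by the domain, destroying both the stationarity and the positive-drift estimate used above. I would quarantine a boundary strip of width $O(\log^2 n)$ and argue that the walk spends at most $\operatorname{polylog}(n)$ consecutive steps inside it before either terminating (if $q$ lies near the boundary) or re-entering the bulk, contributing an additive cost absorbed in $\log^6 n$. This is where the smoothness of $\cD$ enters, providing a local half-plane approximation in which one can still prove a positive drift away from $\partial \cD_n$. Uniformity over all pairs $(z,q)$ is then obtained by discretising $\cD_n$ on a grid of constant spacing, taking a union bound over the $O(n^2)$ pairs of grid points against the super-polynomial tail, and approximating a general $q$ by the nearest grid point at additive polylog cost.

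Finally, the expectation bound $\e[\sup\Gamma] = O(\sqrt n)$ follows from the high-probability estimate together with $\operatorname{diam}(\cD_n) = O(\sqrt n)$: on the good event the supremum is $O(\sqrt n + \sqrt{\sqrt n}\,\log^6 n) = O(\sqrt n)$, while on the complementary event of probability $\le 1/n$ one uses the trivial worst-case bound $\Gamma \le O(n)$ (no vertex is visited twice in a finite Delaunay triangulation) to contribute $O(1)$.
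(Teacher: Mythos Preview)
Your martingale/Bernstein approach to bounding the number of steps is in the same spirit as the paper's (Proposition~\ref{pro:number_steps}), and the use of competitiveness for the length is fine for \textsc{Competitive-Path}. But the argument has a genuine gap at the discretisation step.

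You propose to control the supremum over $q\in\cD_n$ by a union bound over a constant-spacing grid and then claim that a general $q$ is approximated by the nearest grid point ``at additive polylog cost''. This fails: $\Gamma(z,q)$ is \emph{not} stable under small perturbations of $q$. The stopper chosen at each step depends on the direction $Z_iq$, and moving $q$ by an arbitrarily small amount can change the stopper at step $1$, hence the entire subsequent sequence of stoppers. So bounding $\Gamma$ at grid points says nothing about $\Gamma$ between them. The paper confronts exactly this issue and resolves it with the Invariance Lemma (Lemma~\ref{lem:invariance}): there is an arrangement of $O(n^2)$ rays, with at most $2n^4$ cells, such that the full sequence of stoppers is \emph{identical} for all $q$ in a cell. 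One then needs to hit every cell with a sampled query point, which in turn requires a lower bound on the area of the smallest cell (Proposition~\ref{pro:area_smallest_cell}); this is nontrivial and occupies all of Section~\ref{sec:area}. With that in hand one samples $n^{2k}$ i.i.d.\ uniform queries (the paper takes $k=40$) and applies the union bound against tails of order $\exp(-\omega_n^{3/2})$.

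A secondary point: your boundary treatment is more elaborate than needed. The paper does not quarantine a strip; it simply observes that for a smooth convex $\cD$, once all search cones have radius $O(\omega_n)$ (which holds on the good event $G_k$), at least half of each $\cone(z,q,r)$ lies inside $\cD_n$, so the same concentration arguments go through with worse constants. Your ``positive drift away from $\partial\cD_n$'' argument is not needed and would be harder to make rigorous.
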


Finally, we bound the computational complexity of the algorithm, $T(z,q)$.
\begin{theorem}\label{th:main-th}
Let $\bX_n:=\{X_1,X_2,\dots, X_n\}$ be a collection of $n$ independent uniformly
random points in $\cD_n$. Then in the RAM model of computation, there exists a
constant $C$ depending only on the shape of $\cD$ and the particular
implementation of the algorithm such that for all $n$ large enough,
\begin{align*}
\p\bigg(\exists z\in \bX_n,\, q\in \cD_n~:~T(z,q) 
>
C \cdot \|zq\|\log\log n + (1+\sqrt{\|zq\|})\log^6 n\,\bigg)
&\le \frac 1 n.
\end{align*}
In particular, as $n\to\infty$,
$$
\e\bigg[\sup_{z\in \bX_n,\, q\in \cD_n} T(z,q)\bigg] = O( \sqrt{n}\log\log n\,).
$$
\end{theorem}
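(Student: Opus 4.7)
The plan is to derive the computational complexity bound from Theorem~\ref{th:main-th} (the previous statement, which controls the number of steps and edges of the walk) by carefully bounding the cost of implementing a single step.

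First, I would describe an implementation in which, at each site $v\in\bX_n$, the Delaunay neighbours of $v$ are stored in a cyclically sorted array indexed by the angle they make around $v$. A step of cone walk at $v$ needs to identify the neighbours of $v$ lying inside a cone with apex $v$ oriented towards $q$. Because the neighbours are already sorted angularly, locating the two extreme neighbours delimiting this arc reduces to two predecessor queries, each executable in $O(\log d_v)$ time, where $d_v := |\neighbours_1(v)|$. The remaining per-step work (selecting the correct candidate according to the cone walk rule and updating the current position) is proportional to the number of candidates examined in that step, which by Theorem~\ref{thm:local} is at most $O(\log^{3+\epsilon} n)$ uniformly with high probability and $O(1)$ in expectation for a typical step.

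Next, I would combine this per-step implementation with a standard probabilistic estimate on the maximum Delaunay degree: with probability $1 - o(1/n)$, every vertex of $\bX_n$ has degree $O(\log n)$, which follows from an exponential tail on the degree of a single site combined with the $n$-fold union bound. Hence $\log d_v = O(\log\log n)$ uniformly on this event. On the intersection of this event with the good events of Theorem~\ref{thm:local} and Theorem~\ref{th:main-th} (each of failure probability at most $1/n$ up to constants), the total running time is bounded by the number of steps $O(\|zq\| + \log^6 n)$ times the per-step search cost $O(\log\log n)$, plus the total candidate-examination cost which is subdominant. This yields $O(\|zq\|\log\log n + \log^6 n \log\log n)$, and absorbing the extra $\log\log n$ on the additive term into the constant in front of $\log^6 n$ (or, equivalently, into the $\sqrt{\|zq\|}\log^6 n$ slack of the statement) gives the claimed bound. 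A final union bound over the three underlying bad events, whose constants can be tuned so that each has probability strictly less than $1/(3n)$, yields the $1/n$ probability bound.

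The main obstacle is the clean separation of the two regimes: the dominant term $C\|zq\|\log\log n$ must come from the generic $O(\log\log n)$ per-step cost applied to $O(\|zq\|)$ typical steps, whereas the additive $\log^6 n$ term must simultaneously absorb the small number of atypical steps and any local blow-up in per-step work (from Theorem~\ref{thm:local}) without acquiring an extra $\log\log n$ multiplier. A secondary, more routine, subtlety is that each failure probability in the union bound must be kept at $o(1/n)$, which is achievable but requires choosing the constants in the max-degree and candidate-count estimates slightly generously.
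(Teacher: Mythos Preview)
There is a genuine gap: your implementation sketch does not match what a step of cone walk actually does. The stopper $Z_{i+1}$ is \emph{not} in general a Delaunay neighbour of $Z_i$; it is the first point of $\ppp$ hit by the growing region $\cone(Z_i,q,r)$, and the algorithm reaches it by repeatedly discovering \emph{intermediate} vertices in $\disc(Z_i,q,r)\setminus\cone(Z_i,q,r)$ and adding their neighbours to a candidate list (see the pseudocode and Lemma~\ref{lem:intermediate_steps}). A binary search among the angularly sorted neighbours of $Z_i$ therefore does not locate the stopper, and it does not help with the repeated task that actually dominates the cost of a step: selecting, among all current candidates, the one with smallest associated disc radius. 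So the sentence ``the remaining per-step work \dots\ is proportional to the number of candidates examined'' hides exactly the operation that produces the logarithmic overhead.

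In the paper the $\log\log n$ factor has a different origin. With a priority queue on the candidate list, the cost of step $i$ is $O(\tau_i\log M_i)$ (Section~\ref{sec:complexity}), where $\tau_i$ is the number of intermediate vertices and $M_i$ the number of sites accessed in that step. Proposition~\ref{prop:max_sites_in_one_step} gives $M_i\le M_{\max}\le \omega_n^{3+\xi}$ uniformly with the required probability, so $\log M_i=O(\log\omega_n)=O(\log\log n)$; Proposition~\ref{pro:bound_substeps} gives $\sum_i\tau_i=O(L_0+\omega_n^3)$. Combining these yields Proposition~\ref{prop:complexity}, and the passage to the worst pair $(z,q)$ and then to i.i.d.\ points is done exactly as for the other parameters in Section~\ref{sec:relax_query}. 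If you want to salvage your outline, replace the binary-search story by this priority-queue accounting; the quantity that needs to be polylogarithmic is $M_{\max}$, not the maximum degree. (Incidentally, your claim that the maximum Delaunay degree is $O(\log n)$ is not correct near $\partial\cD_n$; the paper only establishes $O(\log^{2+\xi}n)$ there, see Proposition~\ref{prop:max_degree}. This would not hurt the final bound, since its logarithm is still $O(\log\log n)$, but the justification you give does not cover the boundary.)
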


\begin{remark}
The choice of the initial vertex is never discussed. However, previous results
show that choosing this point carefully can result in an expected asymptotic
speed up for any graph navigation algorithm~\cite{Mucke:1996aa}.
\end{remark}

\subsection{Layout of the paper}
In Section~\ref{sec:algo}, we give a precise
definition of the cone walk algorithm and prove some important geometric
properties. In Section~\ref{sec:analysis}, 
we begin the analysis for the cone walk algorithm applied to a homogeneous
planar Poisson process.
To avoid problems when the walk goes
close to the boundary, we provide an initial analysis which assumes that the
points are sampled from a disc with the query
point at its centre. 
This analysis is then extended to arbitrary query points
in the disc and also to other convex domains in Section~\ref{sec:relax_query}.
In Section~\ref{sec:area}, we prove estimates about an auxiliary line
arrangement which are crucial to proving the worst-case probabilistic bounds in
Theorems~\ref{thm:local} and~\ref{th:main-th}. Finally, we compare our findings
with computer simulations in Section~\ref{sec:simulations}.

\begin{figure}   [t]
   \begin{center}
       \includegraphics[scale=1]{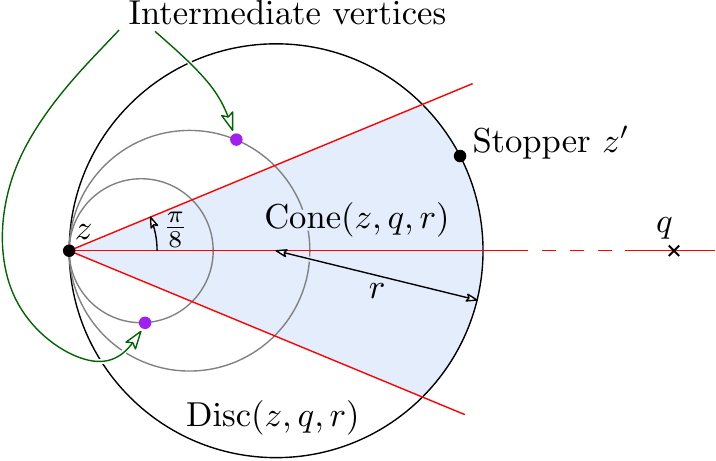}
   \end{center}
   \caption{Choosing the next vertex. \label{fig:next}}
\end{figure}

\section{Algorithm and geometric properties}\label{sec:algo}
We consider the finite set of sites in general position (so that no
three points of the domain are co-linear, and no four points are co-circular),
$\sites \subset \reals^2$
contained within a compact convex domain $\cD \subset \reals^2$. Let
$\DT(\sites)$ be the Delaunay triangulation of $\sites$, which is the graph in
which three sites $x,y,z\in \sites$ form a triangle if and only if the disc
with $x,y$ and $z$ on its boundary does not contain any site in $\sites$. Given
two points $z,q\in\reals^2$ and a number $r \in \reals$ we define
$\disc(z,q,r)$ to be the closed disc
whose diameter spans $z$ and the point at a distance $2r$ from $z$ on the ray
$zq$. Finally, we define $\cone(z,q,r)$ to be the sub-region of $\disc(z,q,r)$
contained within a closed cone of apex $z$, axis $zq$ and half angle
$\frac{\pi}{8}$ (see Figure~\ref{fig:next}).

\subsection{The cone walk algorithm}
Given a site $z\in\sites$ and a destination point $q\in\cD$, we define one
\emph{step} of the cone walk algorithm by growing the region $\cone(z,q,r)$
anchored at $z$ from $r=0$ until the first point $z'\in\sites$ is found such
that the region is non-empty. Once $z'$ has been determined, we refer to it as
the \emph{stopper}. We call the region $\cone(z,q,r)$ for the given $r$ a
\emph{search cone}, and we call the associated disc $\disc(z,q,r)$ the
\emph{search disc} (see Figure~\ref{fig:next}). The point $z'$ is then selected
as the anchor of a new search cone $\cone(z',q, \cdot)$ and the next step of
the walk begins. See Figure~\ref{fig:walk} for an example run of the algorithm.

To find the stopper using only neighbour incidences in the Delaunay
triangulation, we need only access vertices in a well-defined local
neighbourhood of the search disc. Define the points $\sites \cap \disc(z,q,r)
\setminus \{z,z'\}$ to be the \emph{intermediate vertices}. The algorithm finds
the stopper at each step by gradually growing a disc anchored at $z$ in the
direction of the destination, adding the neighbours of all vertices in $\sites$
intersected along the way. This is achieved in practice by maintaining a series
of candidate vertices initialised to the neighbours of $z$ and selecting amongst
them the vertex defining the smallest search disc at each iteration. Each time
we find a new vertex intersecting this disc, we check to see if it is contained
within $\cone(z,q,\infty)$. If it is, this point is the next stopper and this
step is finished. Otherwise the point must be an intermediate vertex and we add
its neighbours to the list of candidate vertices. This procedure works because
the intermediate vertex defining the next largest disc is always a neighbour of
one of the intermediate vertices that we have already visited during the current
step (see Lemma~\ref{lem:intermediate_steps}).

We terminate the algorithm when the destination $q$ is contained within the
current search disc for a given step. At this point we know that one of the
points contained within $\disc(z,q,r)$ is a Delaunay neighbour of $q$ in
$\DT(\sites \cup \{q\})$. We can further compute the triangle of $\DT(\sites)$
containing the query point $q$ (point location) or find the nearest neighbour
of $q$ in $\DT(\sites)$ by simulating the insertion of the point $q$ into
$\DT(\sites)$ and performing an exhaustive search on the neighbours of $q$ in
$\DT(\sites \cup \{q\})$.

We will sometimes distinguish between the \emph{visited} vertices, which we take
to be the set of all sites contained within the search discs for every step and
the \emph{accessed} vertices, which we define to be the set of all vertices
accessed by the cone-walk algorithm. Thus the accessed vertices are the visited
vertices along with their 1-hop neighbourhood.

The pseudo-code below gives a detailed algorithmic description of the
$\proc{Cone-Walk}$ algorithm. We take as input some $z\in\sites$, $q\in \cD$
and return a Delaunay neighbour of $q$ in $\DT(\sites \cup \{q\})$. Recalling
that $\neighbours_1(v)$ refers to the Delaunay neighbours of $v\in\DT(\sites)$
and additionally defining $\proc{Next-Vertex}(S, z, q)$ to be the procedure
that returns the vertex in $S$ with the smallest $r$ such that $\disc(z, q, r)$
touches a vertex in $S$ and $\proc{In-Cone}(z,q,y)$ to be \texttt{true} when
$y\in \cone(z,q,\infty)$.

\begin{footnotesize}
    \begin{codebox}
        \Procname{$\proc{Cone-Walk}(z,q)$}
        \li $Substeps    \gets \{z\}$
        \li $Candidates  \gets \neighbours_1(z)$
        \li \While \texttt{true} \Do 
            \label{code_main_loop}
        \li     $y \gets \proc{Next-Vertex}(Candidates \cup \{q\}, z,q)$
                \label{code_next_vertex}
        \li     \If $\proc{In-Cone}(z,q,y)$ \Do
        \li         \If $y=q$ \hspace*{5mm} \Do
                        \label{code_start_path}
        \li             \Comment{Destination reached.}
        \li             $\Return\; \proc{Next-Vertex}(Substeps,q,z)$
                    \End
        \li         \Comment{$y$ is a stopper}
        \li         $z\gets y$
        \li         $Substeps \gets \{z\}$
        \li         $Candidates \gets \neighbours_1(z)$
        \li     \Else 
        \li         \Comment{$y$ is an intermediate vertex.}
        \li         $Substeps\gets Substeps \cup \{y\}$
        \li         $Candidates \gets Candidates \cup
                    \neighbours_1(y)\setminus Substeps$
                \End
            \End
    \end{codebox}
\end{footnotesize}

\subsection{Path Generation} We note that the order in which the vertices are
discovered during the walk does not necessarily define a path in $\DT(\sites)$.
If we only wish to find a point of the triangulation that is close to the
destination (for example, in point location), this is not a problem. However,
in the case of routing, a path in the triangulation is required to provide a
route for data packets. To this end, we provide two options that we shall refer
to as $\proc{Simple-Path}$ and $\proc{Competitive-Path}$. $\proc{Simple-Path}$
is a simple heuristic that can quickly generate a path that is provably short
on average. We conjecture that $\proc{Simple-Path}$ is indeed competitive,
however we were unable to prove this.
$\proc{Competitive-Path}$ is slightly more complex from an implementation point
of view, however we show that for any possible input the algorithm will always
generate a path of constant competitiveness whilst still maintaining the same
asymptotic behaviour under the point distribution hypotheses explored in
Section~\ref{sec:analysis}.

\paragraph{$\proc{Simple-Path}$}
A simple way to generate a valid path is to keep a predecessor table for each
vertex. We start with an empty table at the beginning of each step, and then
every time we access a new vertex, we store it in the table along with the
vertex that we accessed it from. To trace a path back, we simply follow the
predecessors.

\paragraph{$\proc{Competitive-Path}$}
Let $Z_i$ for $i>0$, be the $i$th stopper in the walk thus, $Z_{i}$ is the
stopper found at step $i$) and $Z_0 := z$. For a path to be competitive, it
should at least be
\emph{locally} competitive: for each step, there should be a bound on the
length of the path generated between $Z_i$ and $Z_{i+1}$, which does not depend
on the points in the search disc. To construct a path verifying this property,
we use the fact that the \emph{stretch factor} of the Delaunay triangulation is
bounded above by a constant, $\lambda$. This means that for any two sites $x,y$,
there exists a path from $x$ to $y$ in the Delaunay triangulation for which the
sum of the lengths of the edges is at most $\lambda \|xy\|$. Currently the
literature gives us that the stretch factor is in $[1.5932,
1.998]$~\cite{Xia2011a,DBLP:conf/cccg/XiaZ11}. Clearly this implies that there
exists a path between $Z_i$ and $Z_{i+1}$ with total length at most
$\lambda
\|Z_iZ_{i+1}\|$, and this path cannot exit the ellipse
$\elli(Z_i,Z_{i+1}):=\{x\in\sites : \|xZ_i\|+ \|xZ_{i+1} \| <
\lambda\|Z_iZ_{i+1}\| \}$. We use Dijkstra's algorithm to find the shortest
path between $Z_i$ and $Z_{i+1}$ which uses only vertices within
$\elli(Z_i,Z_{i+1})$. The resulting path implicitly has stretch bounded by
$\lambda$. We show in Lemma~\ref{lem:competitiveness} that this algorithm
results in a bound for the competitiveness for the full path.

\begin{lemma}
\label{lem:competitiveness} $\proc{Cone-Walk}$ is $3.7$-competitive when the
{\sc Competitive-Path} algorithm is used to generate the path in $\DT(\sites)$
\end{lemma}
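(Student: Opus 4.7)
The plan is to split the path produced by $\proc{Competitive-Path}$ into its successive Dijkstra sub-paths between consecutive stoppers $z=Z_0,Z_1,\ldots,Z_k$ (plus a final piece from $Z_k$ to the returned vertex $v$), bound each sub-path via the Delaunay stretch factor, and then bound the sum of stopper-to-stopper distances using the $\pi/8$ cone geometry.

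First, for each $i$ the sub-path from $Z_i$ to $Z_{i+1}$ is the Delaunay shortest path restricted to $\elli(Z_i,Z_{i+1})$. Since the Delaunay triangulation is a $\lambda$-spanner with $\lambda\le 1.998$, a Delaunay path from $x$ to $y$ of Euclidean length at most $\lambda\|xy\|$ exists and is automatically contained in $\elli(x,y)$; hence the Dijkstra path has Euclidean length at most $\lambda\|Z_iZ_{i+1}\|$. The same bound applies to the final piece from $Z_k$ to $v$: since $v$ is a substep of the last step and the search disc of step $k$ has radius $r_k=\|Z_kq\|/2$ (the smallest $r$ for which $q$ enters the disc), we have $\|Z_kv\|\le \|Z_kq\|=:D_k$.

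Second, I would bound $\sum_{i<k}\|Z_iZ_{i+1}\|$ via the cone geometry. Write $D_i=\|Z_iq\|$, $d_i=\|Z_iZ_{i+1}\|$ and $\theta_i=\angle qZ_iZ_{i+1}$. The algorithm accepts $Z_{i+1}$ as a stopper only before $q$ enters the search disc, which forces $d_i\le 2r_i<D_i$; the cone constraint gives $|\theta_i|\le\pi/8$. The law of cosines in $\triangle Z_iZ_{i+1}q$ rearranges to
\[
D_i-D_{i+1} \;=\; \frac{d_i\bigl(2D_i\cos\theta_i-d_i\bigr)}{D_i+D_{i+1}},
\]
and a direct trigonometric optimisation over $|\theta_i|\le\pi/8$ and $0<d_i<D_i$ (whose infimum is approached as $d_i/D_i\to 1$ and $|\theta_i|=\pi/8$) yields $D_i-D_{i+1}\ge\alpha\,d_i$ with $\alpha=1-2\sin(\pi/16)\approx 0.61$. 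Telescoping then gives $\sum_{i<k}d_i\le(\|zq\|-D_k)/\alpha$.

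Combining the two bounds, the total Euclidean length of the path is at most
\[
\lambda\Bigl(\tfrac{\|zq\|-D_k}{\alpha}+D_k\Bigr) \;=\; \tfrac{\lambda}{\alpha}\|zq\|+\lambda\,D_k\Bigl(1-\tfrac{1}{\alpha}\Bigr),
\]
and since $\alpha<1$ the coefficient of $D_k$ is negative, so the right-hand side is maximised at $D_k=0$ and gives a competitive ratio of at most $\lambda/\alpha$. Plugging in $\lambda\le 1.998$ and $\alpha=1-2\sin(\pi/16)$ yields a ratio strictly below $3.7$. The main obstacle is the trigonometric optimisation isolating the sharpest admissible $\alpha$ under the two constraints $|\theta_i|\le\pi/8$ and $d_i<D_i$; once it is in hand the result follows by combining the $\lambda$-spanner inequality with the telescoping sum of distances to $q$.
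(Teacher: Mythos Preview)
Your approach is essentially the same as the paper's: bound each Dijkstra sub-path by $\lambda\|Z_iZ_{i+1}\|$ via the spanner property, then control $\sum_i\|Z_iZ_{i+1}\|$ by a telescoping inequality of the form $\|Z_iZ_{i+1}\|\le C\,(\|Z_iq\|-\|Z_{i+1}q\|)$ and conclude with competitiveness $\le \lambda C$. The paper reads off $C=2\cos(\pi/8)\approx 1.848$ directly from the step geometry (their Figure ``dist-progress''), yielding $2\lambda\cos(\pi/8)\le 3.7$; you instead optimise the law-of-cosines identity under the looser constraints $|\theta_i|\le\pi/8$, $d_i<D_i$ and obtain $C=1/(1-2\sin(\pi/16))\approx 1.64$, hence a ratio $\approx 3.28$. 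So your argument is correct and in fact gives a sharper constant than the paper's, at the cost of an explicit (but routine) optimisation step; your more careful handling of the final hop to the returned vertex $v$ is also sound, though the paper sidesteps it by treating $q$ as the terminal stopper so that the telescoping sum collapses exactly to $\|zq\|$.
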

\begin{proof}
Let $Z_i$, $Z_{i+1}$ be the stoppers of two consecutive steps defined by
the algorithm. The stretch factor bound guarantees that the path generated
between $Z_i$ and $Z_{i+1}$ has length bounded by $\lambda \|Z_i Z_{i+1}\|$,
meaning that the longest path can have stretch at most $\lambda
\sum_{i=0}^{\tau-1}\|Z_i Z_{i+1}\|/\|zq\|$ where $\tau$ is the number of steps
in the walk. We bound this sum by observing that $\|Z_i Z_{i+1}\| \leq 2 \cos
\tfrac{\pi}{8} \cdot(\|Z_i q\| - \|Z_{i+1}q\|) $, which follows from
Figure~\ref{fig:dist-progress}. Finally, no path defined by the algorithm can be
longer than
\begin{align*}
      \lambda \sum_{i=0}^{\tau-1}\|Z_i Z_{i+1}\|
      \leq 2 \lambda \cos \tfrac{\pi}{8} 
          \sum_{i=0}^{\tau-1}\big(\|Z_i q\| - \|Z_{i+1}q\|\big)
      \leq 2 \lambda \cos \tfrac{\pi}{8} \cdot \|z q\|
\end{align*}
Thus the path is $c$-competitive for 
$c := 2 \lambda \cos \tfrac{\pi}{8}  \le 4 \cos \tfrac{\pi}{8} \le  3.7$.
\end{proof}

\subsection{Complexity}
\label{sec:complexity}
In this section we give deterministic bounds on the number of operations
required to compute $\proc{Cone-Walk}(z,q)$ within the RAM model of computation.
In this model, accessing, comparing and performing arithmetic on points is
treated as atomic. We will use these deterministic bounds to extract
probabilistic bounds under certain distribution assumptions in
Section~\ref{sec:algorithmic_complexity}. For now, we focus on a single step of
the walk starting from $y\in\sites$, and resulting in a disc with radius $r$.
Let $k$ be the number of points intersecting the disc $\disc(y,q,r)$ and $m$ be
the number of edges in $\DT(\sites)$ intersecting $\partial\disc(y,q,r)$
(where we use the notation $\partial A$ to denote the boundary of $A$) .

We note that every intermediate vertex will add its neighbours to the list of
$Candidates$ when visited. Each of these insertions can be associated with a
single edge of $\DT(X)$ intersecting $\disc(y,q,r)$ (with multiplicity two
for each `internal' edge, since they are accessed from both sides). By the
Euler relation, the total number of such insertions for one step is thus at most
$3(m+2k)$.
In addition, we observe that when moving from one intermediate vertex
to the next, a search in the list of $Candidates$ is required. A simple linear
search requires $O(m+k)$ operations for each intermediate vertex. Combining this
with the above, we achieve a bound of $O(k(m+k))$ operations for one step. This
bound may be improved by replacing $Candidates$ with a priority queue keyed on
the associated search-disc radius of each candidate, which yields a simple
improvement to $O(k\log(m+k))$.

For the path generation algorithms, we observe that $\proc{Simple-Path}$ only
requires a constant amount of processing per vertex accessed to generate the
predecessor table and $O(k)$ time to output the path at the end of each step,
so the asymptotic running time is not affected by its inclusion.
$\proc{Competitive-Path}$ is slightly more complicated since it accesses all
points within an ellipse enclosing each search disc. Let $k'$ be the number of
points in this ellipse along with their neighbours. The path is found by
applying Dijkstra's algorithm to $k'$ points, applying Euler's relation gives
us an updated bound for a single step of $O(k'\log k')$.
\begin{figure} [t]
       \begin{center}
        \includegraphics[scale=0.8]{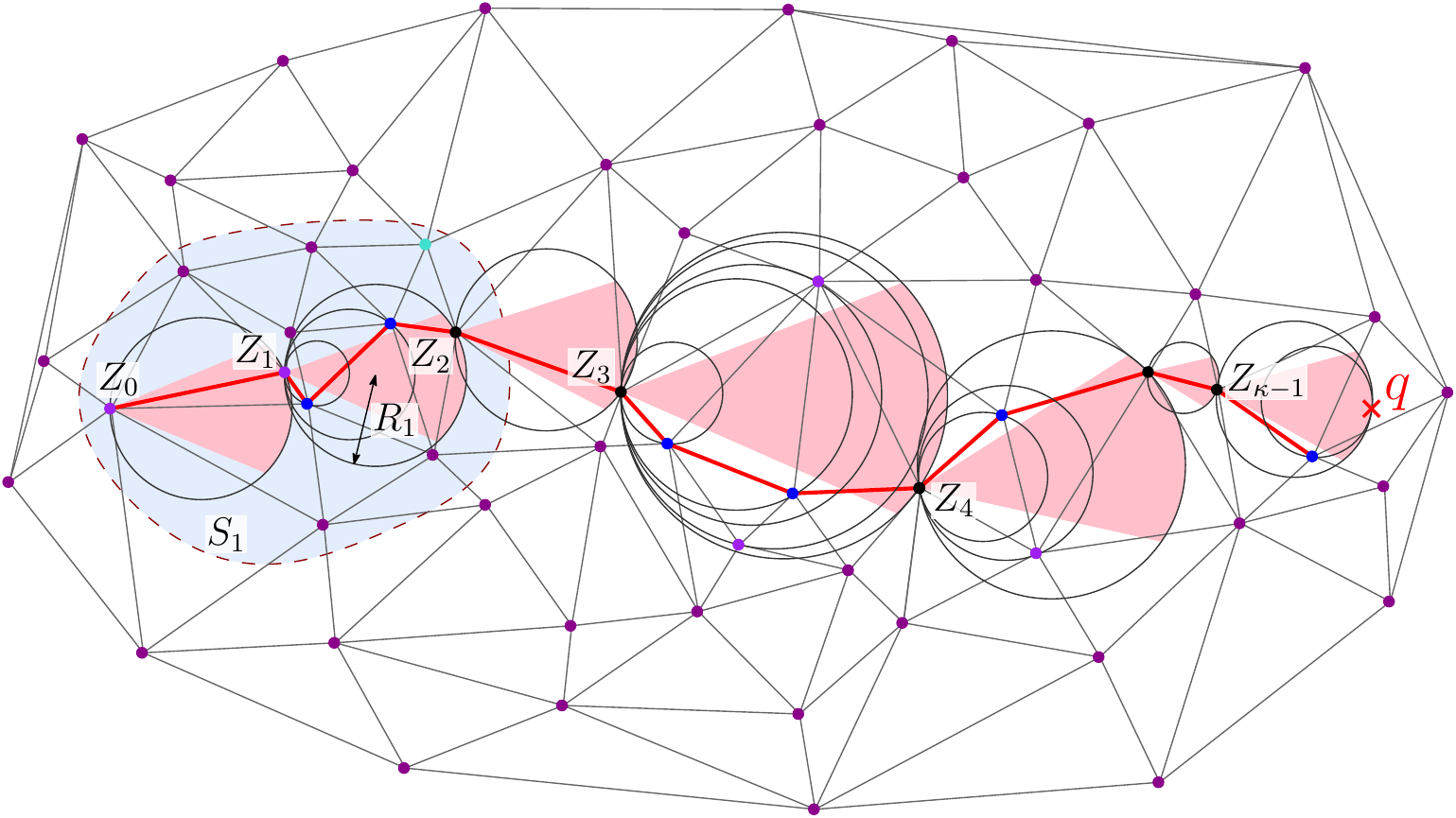}
    \end{center}
    \caption{
       An example of cone  walk. The points $(Z_i)_{i>0}$ are the
       \emph{stoppers} (or sometimes, the \emph{steps}) and the points within
       each circle are the \emph{intermediate vertices}.
       The shaded conic regions are the \emph{cones}, and the set of outer
       circles for each of the steps in the diagram is referred to as the
       \emph{discs}.
       \label{fig:walk}
    }
\end{figure}

\subsection{Geometric properties}\label{sec:geometry}
We now prove a series of geometric lemmata giving properties of steps in
the walk. We begin with a small lemma that will guarantee that we never get
`stuck' when performing a search for the next step, thus demonstrating
correctness of the algorithm. The following two `overlapping' lemmata allow us to
establish which regions may be considered independent in a probabilistic sense
and will be important in Section~\ref{sec:analysis}. Finally we provide a
`stability' result, which will help us to bound the region in which a
destination point may be moved without changing the sequence of steps taken by
the algorithm. This will be important when we enumerate the number of different
walks possible for a given set of input points.

\subsubsection{Finding a Delaunay path within the discs}
\begin{lemma}[Path finding lemma]\label{lem:intermediate_steps}
Let $q\in \cD$, $z\in \sites$ and $y' \in \sites$ with associated disc
$\disc(z,q,r')$. Suppose there exists an $r>0$ such that $\left(\disc(z,q,r')
\setminus \disc(z,q,r)\right) \cap \sites= \{y'\}$. Then there exists a point in
$\disc(z,q,r)$ that is a Delaunay neighbour of $y'$.
\end{lemma}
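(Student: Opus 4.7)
The plan is to exhibit a Delaunay neighbour of $y'$ explicitly by the standard empty-disc characterization, using a one-parameter family of discs shrinking from $\disc(z,q,r')$ down to the single point $y'$.

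First I would define, for each $t \in [0,r']$, the disc $D_t$ of radius $t$ that is internally tangent to $\partial\disc(z,q,r')$ at the point $y'$. Concretely, $D_t$ is the disc of radius $t$ whose centre lies on the segment from $y'$ to the centre of $\disc(z,q,r')$, at distance $t$ from $y'$. Then $D_t$ is contained in $\disc(z,q,r')$, its boundary passes through $y'$ for every $t$, $D_0=\{y'\}$, and $D_{r'}=\disc(z,q,r')$.

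Next I would consider $t^* := \inf\{t \in (0,r'] : D_t \cap (\sites\setminus\{y'\}) \ne \emptyset\}$. Because $\disc(z,q,r')$ contains $z \ne y'$, the set on the right is nonempty at $t=r'$, so $t^*$ is well-defined and at most $r'$. By continuity of the family $D_t$ and the general position assumption (no four co-circular sites), at $t^*$ the open disc $\mathrm{int}(D_{t^*})$ contains no site of $\sites$, while its boundary contains exactly one additional site $y'' \in \sites \setminus \{y'\}$. Since $\partial D_{t^*}$ carries both $y'$ and $y''$ and the open interior is empty of sites, the segment $y'y''$ is a Delaunay edge of $\DT(\sites)$ by the empty-circle criterion, so $y''$ is a Delaunay neighbour of $y'$.

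It only remains to locate $y''$. By construction $D_{t^*} \subseteq \disc(z,q,r')$, so $y'' \in \disc(z,q,r')$. The hypothesis says that the only site of $\sites$ in the annular region $\disc(z,q,r') \setminus \disc(z,q,r)$ is $y'$ itself, and $y'' \ne y'$, hence $y'' \in \disc(z,q,r)$, which is exactly what we needed to show. There is essentially no hard step here; the only point requiring mild care is the reliance on general position to ensure that the first contact of the shrinking family with $\sites\setminus\{y'\}$ is a single site rather than two or more points simultaneously, but this is guaranteed by the standing assumption on $\sites$.
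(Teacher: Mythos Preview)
Your proof is correct and is essentially identical to the paper's argument: the paper grows the family $\disc(y',\gamma',\rho)$, where $\gamma'$ is the centre of $\disc(z,q,r')$, which is precisely your family $D_t$ of discs internally tangent at $y'$, and then stops at the first site encountered and invokes the empty-circle criterion. Your write-up is somewhat more detailed (explicit use of the infimum $t^*$ and the general-position assumption), but the underlying idea and the one-parameter family of discs are the same.
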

\begin{figure}
    [t]
    \begin{center}
        \includegraphics[scale=1]{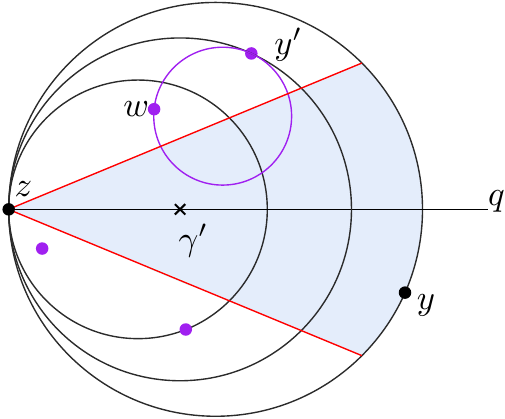}
    \end{center}
    \caption{
        We observe that $y'$ has always a Delaunay neighbour 
        in $\disc(z,q,r)$, where $r$ is the
        radius ensuring $y\in \partial\! \disc(z,q,r)$.
        \label{fig:algorithm_step}
    }
\end{figure}
\begin{proof}
    Let $\gamma'$ be the centre of $\disc(z,q,r')$. We grow 
    $\disc(y', \gamma',\rho) \subset \disc(z,q,r')$ 
    until we hit a point $w$ in $\sites$. The
    point $w$ is always contained within $\disc(z,q,r)$ because $z$ is on the
    border of $\disc(z,q,r)$. Since the interior of the
    $\disc(y',\gamma',\rho)$ is empty, $w$ is a Delaunay neighbour of $y'$. See
    Figure~\ref{fig:algorithm_step}.
\end{proof}

\begin{corollary}
    Let $q\in \cD$, $z\in \sites$ with $y\in \sites$ its 
    associated stopper satisfying
    $y\in\partial\!\cone(z,q,r)$.  Then there is a path of edges
    of \, $\DT(\sites)$ between $z$ and $y$  contained within $\disc(z,q,r)$.
\end{corollary}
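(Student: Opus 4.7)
The plan is to apply Lemma~\ref{lem:intermediate_steps} iteratively, peeling off one site at a time from the outside of the search disc inward until we reach $z$.

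First, I would enumerate the sites contained in $\disc(z,q,r)$. Since $\sites$ is finite and in general position, we can list the sites of $\sites\cap\disc(z,q,r)$ as $v_0,v_1,\dots,v_k$, where $v_i$ is the site lying on $\partial\!\disc(z,q,r_i)$ for the increasing sequence of radii $0=r_0<r_1<\dots<r_k=r$. By definition of the cone walk step, $v_0=z$ and $v_k=y$, since $y$ is the stopper lying on the cone boundary at radius $r$.

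Next, I would construct the path backward from $y$ to $z$ by a simple induction. Suppose we have reached some vertex $v_j$ (starting with $j=k$, i.e.\ $v_j=y$). Apply Lemma~\ref{lem:intermediate_steps} with the pair of radii $r'=r_j$ and $r=r_{j-1}$: by construction of the enumeration, the annular region $\disc(z,q,r_j)\setminus\disc(z,q,r_{j-1})$ contains exactly the single site $v_j$. The lemma then guarantees that $v_j$ has a Delaunay neighbour $w$ inside $\disc(z,q,r_{j-1})$; this neighbour is necessarily one of $v_0,\dots,v_{j-1}$, say $w=v_{j'}$ with $j'<j$. The Delaunay edge $v_jv_{j'}$ has both endpoints in $\disc(z,q,r)$, hence lies entirely inside this convex region. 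Setting $j\leftarrow j'$ and iterating, the index $j$ strictly decreases at each step, so after finitely many iterations we reach $v_0=z$, yielding the required path.

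The only delicate point is verifying that Lemma~\ref{lem:intermediate_steps} is indeed applicable at each iteration, i.e.\ that the annulus contains only the one site $v_j$; but this is immediate from the ordering of the $r_i$ and the fact that $\disc(z,q,\cdot)$ is a continuous family of nested closed discs. Correctness of the enumeration relies on general position (no two sites appear at the same radius simultaneously on the growing disc's boundary), which is part of our standing hypothesis on $\sites$. Thus no further geometric work is needed beyond a clean invocation of the path finding lemma.
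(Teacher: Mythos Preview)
Your proof is correct and is exactly the argument the paper intends: the corollary is stated without proof immediately after Lemma~\ref{lem:intermediate_steps}, and the implied derivation is precisely this iterative peeling of sites from the outside of the search disc inward, using the lemma at each step to produce a Delaunay edge to a strictly smaller radius. Your handling of the enumeration and the general-position caveat is appropriate.
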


\subsubsection{Independence of the search cones}
When growing a new search cone, it is important to observe that it does not
overlap any of the previous search cones, except at the very end of the walk.
This is formalised by the following lemma.

\begin{figure}[t]
  \begin{center}
       \includegraphics[scale=1]{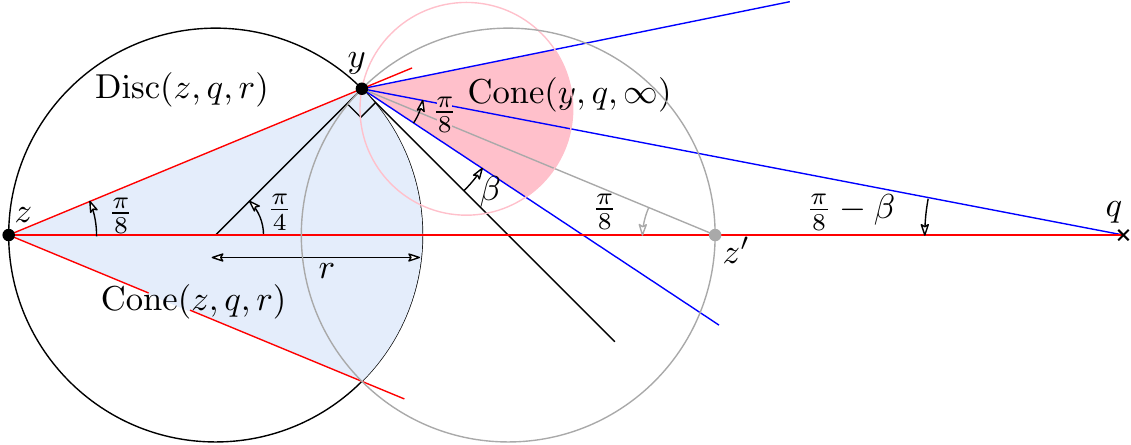}
   \end{center}
   \caption{
       For the proof of Lemma~\ref{lem:independent-cones}.
       \label{fig:independent-cones}
   }
\end{figure}

\begin{lemma}[Non-overlapping lemma]\label{lem:independent-cones}     
    Let $z$ and $y$ be two points of\, $\sites$ and $r>0$ such that
    $\cone(z,q,r)$ has $y$ on its boundary.
    If $\| zq \| > (2+\sqrt{2}\,) r$
    { then $\disc(z,q,r)$ does not intersect }
    the search cone $\cone(y,q, \infty)$ issued from $y$ 
    nor any other search cone for any subsequent step of the walk.
\end{lemma}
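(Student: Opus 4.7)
The plan is to produce an explicit separating line for $\disc(z,q,r)$ and $\cone(y,q,\infty)$, and then to extend the conclusion to every subsequent search cone by an induction. First I would place $z=(0,0)$ and $q=(d,0)$ with $d=\|zq\|$, and use symmetry to assume that $y$ lies in the closed upper half-plane. Since $y$ is the first site met as one grows $\cone(z,q,r)$, it must sit on the circular arc of $\partial\disc(z,q,r)$; writing $\theta\in[0,\pi/8]$ for its polar angle from $z$, one has $y=(r(1+\cos 2\theta),r\sin 2\theta)$ with disc centre $\gamma=(r,0)$. The natural separating candidate is the tangent line $T$ to $\partial\disc(z,q,r)$ at $y$: it passes through $y$ perpendicular to $\vect{\gamma y}$ and bounds the open half-plane $H$ opposite to $\gamma$. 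Since $\disc(z,q,r)$ sits on the $\gamma$-side of $T$, it is enough to show that both $\cone(y,q,\infty)$ and every subsequent search cone lie in $\overline{H}$.

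For $\cone(y,q,\infty)$ itself, since its apex $y$ lies on $T$, I would reduce to the two boundary rays emanating from $y$ at angles $\psi\pm\pi/8$, where $\psi=\arg(\vect{yq})\le 0$. Each such ray must make angle at most $\pi/2$ with the outward normal $\vect{\gamma y}$, whose angle is $2\theta$. The tighter of the two conditions is the lower ray, namely $|\psi|\le 3\pi/8-2\theta$. Substituting $\tan|\psi|=r\sin 2\theta/(d-r(1+\cos 2\theta))$, a monotonicity check shows that the worst case occurs at $\theta=\pi/8$. There the identity $\sin(\pi/4)/\tan(\pi/8)=2\cos^2(\pi/8)=1+\cos(\pi/4)$ collapses the inequality to $d\ge 2r(1+\cos(\pi/4))=(2+\sqrt{2})\,r$, so the strict hypothesis of the lemma forces strict separation of $\disc(z,q,r)$ from $\cone(y,q,\infty)$.

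For the subsequent search cones, write $Z_1=y,Z_2,\dots$ for the consecutive stoppers, with step-$i$ radius $r_i\le\|Z_iq\|/2$ (otherwise $q$ already lies in the current search disc and the walk terminates). I would prove by induction on $i\ge 1$ that $\cone(Z_i,q,r_i)\subseteq\cone(y,q,\infty)$, which together with the previous paragraph yields disjointness with $\disc(z,q,r)$. The inductive step reduces to the auxiliary containment claim: if $a\in\cone(b,q,\infty)$ and $0\le r'\le\|aq\|/2$, then $\cone(a,q,r')\subseteq\cone(b,q,\infty)$. To establish this, place $b=0$ with $q$ on the positive $x$-axis; a direct computation (solving for where the lower boundary ray of $\cone(a,q,\infty)$ meets the line $y=-x\tan(\pi/8)$) shows that that ray can only leave $\cone(b,q,\infty)$ at arc-length at least $2\|aq\|\cos(\pi/8)$ from $a$, with equality precisely when $a$ sits on the upper boundary of $\cone(b,q,\infty)$. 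Since the finite cone $\cone(a,q,r')$ extends along that ray by only $2r'\cos(\pi/8)\le\|aq\|\cos(\pi/8)$, the ray stays inside $\cone(b,q,\infty)$; the upper boundary ray is handled analogously (and is automatic from $\psi\le 0$ in the interior range), and the circular arc boundary of the sector is then handled by convexity.

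The main obstacle I anticipate is the auxiliary claim: the infinite cone $\cone(a,q,\infty)$ can genuinely leave $\cone(b,q,\infty)$ when $a$ lies close to $q$, so the argument crucially uses the algorithmic bound $r'\le\|aq\|/2$ to keep the child cone bounded---a purely ``infinite'' sub-cone containment is false. By comparison, the trigonometric reduction that produces the constant $2+\sqrt{2}$ is routine once one identifies the worst case $\theta=\pi/8$ and the collapse $\sin(\pi/4)/\tan(\pi/8)=1+\cos(\pi/4)$.
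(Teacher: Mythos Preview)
Your approach mirrors the paper's almost exactly: the separating tangent line at $y$, the identification of the corner $\theta=\pi/8$ as the worst case, and the reduction to $d\ge(2+\sqrt 2)r$ are all what the paper does (the paper phrases it in terms of the angle $\beta$ between the tangent and the cone's boundary ray, but it is the same computation). For the subsequent search cones the paper simply \emph{asserts} that they all remain inside $\cone(y,q,\infty)$, so you are actually supplying a justification the paper omits.

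There is, however, a genuine gap in your auxiliary containment claim as stated. The assertion ``if $a\in\cone(b,q,\infty)$ and $r'\le\|aq\|/2$ then $\cone(a,q,r')\subseteq\cone(b,q,\infty)$'' is false without an additional hypothesis: take $b=0$, $q=(1,0)$ and $a=(100,0)$, so that $a\in\cone(b,q,\infty)$ and $r'=49$; then $\cone(a,q,r')$ points back toward the origin with half-angle $\pi/8$ and plainly leaves $\cone(b,q,\infty)$. Your computed exit distance $2\|aq\|\cos(\pi/8)$ for the lower ray is only correct when $a$ lies in the disc $\{\,p:\|pq\|\le\|bq\|\,\}$; outside this disc the ray can exit much sooner. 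The fix is immediate in your induction: the distance $\|Z_iq\|$ is strictly decreasing along the walk (this is used elsewhere in the paper, e.g.\ in the competitiveness bound), so $\|Z_iq\|<\|yq\|$ for every $i\ge 2$, and you should add the hypothesis $\|aq\|\le\|bq\|$ to the auxiliary claim. With that hypothesis the extremal case is indeed $a$ on the upper boundary with $\|aq\|=\|bq\|$, where the lower corner of $\cone(a,q,\|aq\|/2)$ lands exactly on the upper boundary of $\cone(b,q,\infty)$ (in fact the lower ray then passes through $b$ itself), and the containment holds. Your sketches for the upper ray (``handled analogously'') and the arc (``by convexity'') should also be tightened: the upper ray can exit through the \emph{lower} boundary of $\cone(b,q,\infty)$, and the arc is an extreme set of the convex region $\cone(a,q,r')$, so checking only the two corners does not suffice---but a direct check (or noting that in the extremal configuration the whole arc lies below the tangent chord through $a$ and the lower corner) finishes the job.
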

\begin{proof}
    Assume without loss of generality that $y$ lies to the left of line $zq$
    and consider the construction given in 
    Figure~\ref{fig:independent-cones}.
    Let $\beta$ denote the angle between the tangent to $\disc(z,q,r)$ at $y$
    and the ray bordering $\cone(y,q,\infty)$.
    $\cone(y,q,\infty)$ and  $\disc(z,q,r)$ do not intersect provided that
    $\beta\geq 0$.
    Placing $y$ at the corner of $\cone(z,q,r)$ maximizes $\beta$,
    in which case we have $\beta>0$ if
    and only if $q$ is to the right of $z'$, the point symmetrical to $z$
    with respect to the line through $y$ perpendicular to $zq$.
    Elementary computations then yield the result.
    Since the whole sequence of search cones following
    the one issued from $y$ remains in { $\cone(y,q,\infty)$, }    
    $\disc(z,q,r)$ does not intersect any of these
    search cones, and the result follows.
\end{proof}

\subsubsection{Independence of the search discs}
When growing the search disc region, the new search disc may overlap previous
search discs but only in their cone parts. This is formalised by the following
lemma:

\begin{lemma}[Overlapping lemma]\label{lem:independent-discs}     
    Let $z$ and $y$ be two  points such that $\cone(z,q,r)$ has $y$ on
    its boundary.
    Then if the search disc $\disc(y,q,\rho)$ issued from $y$ does not
    contain $q$, it
    does not intersect  $\disc(z,q,r)\setminus \cone(z,q,r)$.
\end{lemma}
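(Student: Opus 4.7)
The plan is to reduce to a stronger statement and then prove it by a boundary-and-convexity argument. Since $q\notin\disc(y,q,\rho)$ is equivalent to $2\rho<\|yq\|$, the disc $\disc(y,q,\rho)$ is contained in the disc $D_{y,q}$ with diameter $yq$ (the two are internally tangent at $y$ and $D_{y,q}$ is the larger). It therefore suffices to prove $D_{y,q}\cap\disc(z,q,r) \subset \cone(z,q,r)$. Placing $z$ at the origin and $q$ on the positive $x$-axis, and taking $y$ in the closed upper half-plane by symmetry, the relevant case for the algorithm is $y\in\partial\disc(z,q,r)$ with angle $\theta_y\in[0,\pi/8]$ from the axis $zq$ (when $y$ is a stopper, it is reached as the arc of $\disc(z,q,r)$ sweeps over it), and I work under this hypothesis.

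The sets $L:=D_{y,q}\cap\disc(z,q,r)$ and $\cone(z,q,r)$ are both convex, as intersections of convex sets, so the inclusion $L\subset\cone(z,q,r)$ will follow from $\partial L\subset\cone(z,q,r)$. The boundary $\partial L$ consists of two circular arcs joining the two intersection points of the circles $\partial D_{y,q}$ and $\partial\disc(z,q,r)$: one of them is $y$; call the other $y^\star$. Using the inscribed-angle characterization $p\in\partial D_{y,q}\iff \angle ypq=\pi/2$ together with $y^\star\in\partial\disc(z,q,r)$, a sum-to-product computation in the angular parameter of $\partial\disc(z,q,r)$ gives $\theta^\star\in[-\theta_y,0]$, so in particular $y^\star$ lies on the cone arc. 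The arc of $\partial\disc(z,q,r)$ contained in $D_{y,q}$ is then the short arc from $y$ to $y^\star$ (because $(2r,0)\in D_{y,q}$ whenever $q$ lies outside $\disc(z,q,r)$), and this short arc stays in the angular sector $|\theta|\le\pi/8$, hence in $\cone(z,q,r)$.

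The other boundary arc---the arc of $\partial D_{y,q}$ from $y$ to $y^\star$ lying inside $\disc(z,q,r)$---also has to be shown to lie in $\cone(z,q,r)$. Since its endpoints are on the cone arc, it can leave $\cone(z,q,r)$ only by crossing one of the two straight segments bounding the cone, so it suffices to check that each such segment, from $z$ to a corner $c_\pm$ of the cone, is disjoint from $D_{y,q}$. Using $p\in D_{y,q}\iff (p-y)\cdot(p-q)\le 0$, this amounts to showing that the convex quadratic $t\mapsto (p(t)-y)\cdot(p(t)-q)$ along each cone ray stays non-negative on the segment: it is positive at $z$ (because $y\cdot q>0$), non-negative at $c_\pm$ by a sum-to-product identity valid for $q_1\ge 2r$, and its minimum is attained beyond the segment, so it is non-negative throughout. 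This is the main obstacle of the proof and is precisely where the half-angle $\pi/8$ enters: it is tuned so that the corners $c_\pm$ stay outside $D_{y,q}$ for every $y$ on the cone arc and every $q$ outside $\disc(z,q,r)$. Combining gives $\partial L\subset\cone(z,q,r)$, and by convexity $L\subset\cone(z,q,r)$, whence the lemma.
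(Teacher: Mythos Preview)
Your approach is sound and genuinely different from the paper's. The paper gives essentially a one-line symmetry argument: it observes that the reflected point $y'$ (the mirror image of $y$ across the line $zq$) enters $\disc(y,q,\rho)$ only once the centre of that disc has reached $q$, and since the algorithm stops before that, the new search disc cannot ``reach around'' to the opposite lune of $\disc(z,q,r)\setminus\cone(z,q,r)$. This is concise but leaves the reader to fill in why touching $y'$ is the controlling event. Your route---reducing to the extremal disc $D_{y,q}$ with diameter $yq$, then proving $D_{y,q}\cap\disc(z,q,r)\subset\cone(z,q,r)$ via a boundary-inclusion argument---is more explicit and makes the role of the half-angle $\pi/8$ visible. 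Your computation of $\theta^\star$ is correct: one finds $\tan\theta^\star=\frac{2r-q_1}{q_1}\tan\theta_y\in(-\tan\theta_y,0]$ for $q_1\ge 2r$, so indeed $y^\star$ lies on the cone arc.

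There is, however, a small gap in the last step. Your assertion that ``its minimum is attained beyond the segment'' is correct for the upper edge $[z,c_+]$ but fails for the lower edge $[z,c_-]$ when $q_1$ is close to $2r$. For instance, with $\theta_y=\pi/8$ and $q_1=2r$ one computes the vertex of the quadratic along $[z,c_-]$ at $t^\star=(3+2\sqrt 2)/(4+2\sqrt 2)\approx 0.85$, which is inside the segment. The conclusion still holds, but for a different reason: along $[z,c_-]$ both factors $\sin(-\pi/8-\theta_y)$ and $-2r\cos(\pi/8)\sin\theta_y+q_1\sin(\theta_y-\pi/8)$ are non-positive for $\theta_y\in[0,\pi/8]$, so the product form of $(p-y)\cdot(p-q)$ you used to verify the value at $c_-$ shows it is non-negative; combined with non-negativity at the endpoints and convexity, a direct check (or a discriminant computation) gives positivity throughout. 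So the argument is easily patched---treat $c_+$ and $c_-$ separately, or note that the quadratic along $[z,c_-]$ has negative discriminant---but as written the uniform claim about the vertex location is not quite right.
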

\begin{figure}[t]
 \begin{center}
      \includegraphics[scale=1]{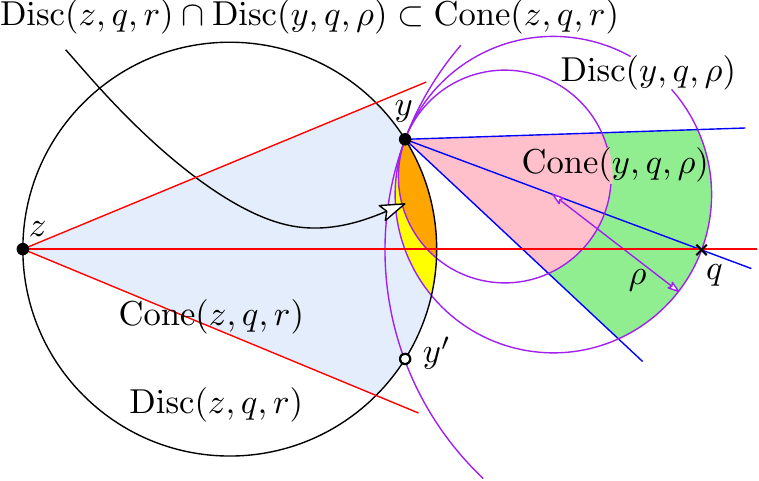}
  \end{center}
  \caption{
      For the proof of Lemma~\ref{lem:independent-discs}. 
      \label{fig:independent-discs}}
\end{figure}
\begin{proof}
    By symmetry we observe that $\disc(y,q,\rho)$ only intersects the point
    $y'$, the point $y$ reflected through the line $zq$, when the centre of
    $\disc(y,q,\rho)$ coincides with $q$ 
    Figure~\ref{fig:independent-discs}). Since the algorithm terminates as soon
    as the current search disc touches $q$, $q$ is never contained within
    $\disc(z,q, \rho)$ and thus this can never happen.
\end{proof}

\subsubsection{Stability of the walk}\label{sec:stability}
In the following lemma we are interested in the stability of the sequence of
steps to reach $q$.

\begin{lemma}[Invariance lemma]\label{lem:invariance} 
For an $n$-set $\bX\subseteq \bbR^2$, there exists an arrangement of half-lines
$\Xi=\Xi(\bX)$ such that the associated subdivision of the plane has fewer than
$2 n^4$ cells, and such that the sequence of steps used by the cone walk
algorithm from any vertex of $\bX$ does not change when the aim $q$ moves in a
connected component of $\bbR^2\setminus \Xi$.
\end{lemma}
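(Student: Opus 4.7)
The plan is to construct $\Xi(\bX)$ from the boundary events of the first-step map $f_x : q \mapsto$ stopper produced by one step of the cone walk from $x \in \bX$ aimed at $q$. Since the walk from any $z \in \bX$ is just the iteration of $f_{Z_i}$ along its own sequence of stoppers, once every $f_x$ is constant in a cell of $\bbR^2 \setminus \Xi$ and the step at which the walk terminates is fixed in that cell, the full walk sequence from every starting vertex is constant there.

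For a fixed $x$, both $\cone(x,q,r)$ and $\disc(x,q,r)$ scale with $r$ in the direction $\hat q := (q-x)/\|q-x\|$, so $f_x$ depends only on $\hat q$ and its level sets are angular sectors with apex $x$. The boundaries of these sectors arise from two kinds of events as $\hat q$ rotates around $x$: cone entry/exit of some $y \in \bX$, giving two rays per $y$ at angle $\pm\pi/8$ from $\overrightarrow{xy}$; and minimizer swap between two candidates $y_1,y_2$ in the cone, i.e.\ $r_{x,y_1}(\hat q)=r_{x,y_2}(\hat q)$ for $r_{x,y}(\hat q)=\|y-x\|^2/(2(y-x)\cdot\hat q)$. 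Inversion through $x$ turns the growing disc $\disc(x,q,r)$ into a half-plane translating toward $x$, so the first hit corresponds to the inverted candidate of maximum projection onto $\hat q$; as $\hat q$ rotates, the argmax traces the convex hull of the inverted point set, yielding at most $O(n)$ distinct swaps. Combined with the $2(n-1)$ cone events, the number of boundary rays issued from any $x$ is $O(n)$. Moreover, a short algebraic computation shows that one step terminates exactly when $q$ crosses the line through the would-be first stopper $y^*$ perpendicular to $\overrightarrow{xy^*}$: the condition $\|qx\|/2 = \|y^*-x\|^2/(2(y^*-x)\cdot\hat q)$ simplifies to $(y^*-x)\cdot(q-y^*)=0$. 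Adding one such termination line per pair $(x,y^*)$ contributes $O(n^2)$ further half-lines.

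Taking $\Xi$ to be the union of all first-step boundary rays and termination lines yields $O(n^2)$ half-lines; a careful book-keeping of the constants keeps $|\Xi| \le 2n^2$, so the standard line-arrangement bound $\binom{m}{2}+m+1$ gives strictly fewer than $2n^4$ cells. Within each such cell, every $f_x$ is constant and each termination line keeps $q$ on a fixed side, so the walk sequence from any $z \in \bX$ is invariant. The main obstacle is the $O(n)$ bound on first-hit transitions per vertex: a naive enumeration over all triples $(x,y_1,y_2)$ would produce $O(n^3)$ ray events and $O(n^6)$ cells, far exceeding the stated count, so the inversion/convex-hull reduction (or equivalently a Davenport--Schinzel analysis of the lower envelope of the $r_{x,y}$ restricted to their cone-domains) is essential. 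Additional minor care is required to handle configurations where $q$ lies exactly on $\Xi$, which are excluded by definition from the connected components.
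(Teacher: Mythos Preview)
Your approach is correct in spirit but takes a noticeably more elaborate route than the paper, and you misidentify where the difficulty lies.

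The paper's argument is short: for each $z\in\bX$, the set of directions $\hat q$ is partitioned into sectors according to which point of $\bX$ is the stopper; since there are at most $n-1$ possible stoppers, there are at most $n-1$ sectors, hence at most $2(n-1)$ bounding rays issued from $z$, and $2n(n-1)$ rays in total. That is the whole ray count---no decomposition into cone-entry events versus swap events, and no inversion or convex-hull machinery. Your worry that a naive enumeration over triples $(x,y_1,y_2)$ would give $O(n^3)$ rays never arises, because the paper counts sectors (one per realised stopper) rather than all potential swap events. What your inversion/convex-hull argument \emph{actually} supplies is a justification that each stopper occupies a single connected angular interval (so that the sector count really is $\le n-1$); the paper simply asserts this with reference to a figure. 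In that sense your argument is more rigorous, but it is filling a small gap rather than overcoming the obstacle you describe.

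Two further points of difference. First, you add termination lines---one per pair $(x,y^\star)$---to fix the step at which the walk stops; the paper's arrangement consists only of rays from the sites and so only pins down the (potentially infinite) sequence of stoppers, not where it truncates. Your treatment is more careful here. Second, once you add those extra $O(n^2)$ lines on top of the $2n(n-1)$ sector rays, the claim that ``careful book-keeping keeps $|\Xi|\le 2n^2$'' is no longer immediate, and the constant $2$ in front of $n^4$ may slip; the paper hits $2n(n-1)$ exactly and so stays below $2n^4$ cleanly.
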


\begin{figure}[t]
 \begin{center}
      \includegraphics[scale=0.8]{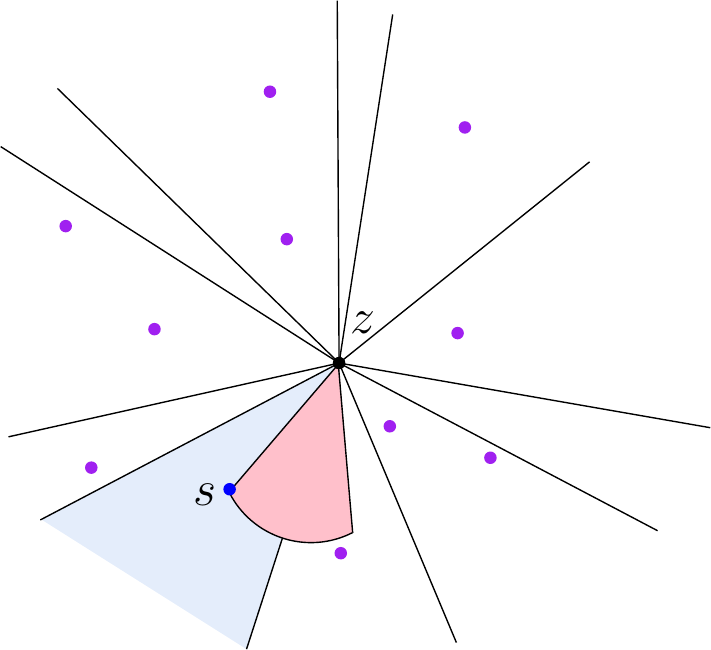}
  \end{center}
  \caption{
      For the proof of Lemma~\ref{lem:invariance}. For a given step $z$,
      moving the destination in the shaded sector will always result in the same
      stopper, $s$, being chosen for the next step.
      \label{fig:stoppers}
  }
\end{figure}
\begin{proof}
Take a point $z\in\sites$ and consider $\mathcal{S}_z$, the set of all possible
stoppers defined by $\cone(z,q,r)$ for some $q\in\cD$ and $r>0$. Each
$s\in\mathcal{S}_z$ defines a unique sector about $z$ such that moving a point
in the given sector does not change the stopper  (see
Figure~\ref{fig:stoppers}). We then create an arrangement by adding a ray on
the border of every sector for each point $z\in\sites$. The resulting
arrangement has the property that moving the destination point $q$ within one
of the cells of the arrangement does not change the stopper of any step for any
possible walk. Clearly, $|\mathcal{S}_z| \leq n-1$ for all $z\in\sites$, and
each sector is bounded by at most two rays, thus there are at most $2 n(n-1)$
rays in the arrangement. Since an arrangement of $m$ lines has at most
$\frac{m^2+m+2}{2}$ cells the result follows { \cite[see, e.g., ][p.
127]{Matousek2002a}}.
\end{proof}


\section{ Cone walk on Poisson Delaunay in a disc}\label{sec:analysis}

Our aim in this section is to prove the main elements towards
Theorem~\ref{th:main-th}, which we go on to complete in
Section~\ref{sec:relax_query}.
Our ultimate goal is to prove bounds on the behaviour of the cone walk for the
\emph{worst} possible pair of starting point and query when the input sites are
generated by a homogeneous Poisson process in a compact convex domain. Achieving
this requires first strong bounds on the probability that the walk behaves badly
for a fixed start point and query. One then proves the worst-case bounds by
showing that to control every possible run of the algorithm, it suffices to
bound the behaviour of the walk for enough pairs of starting points and query;
this relies crucially on the arrangement of Lemma~\ref{lem:invariance}. The tail
bounds required in the second stage of the proof may not be obtained from Markov
or Chebyshev's inequalities together with mean or variance estimates only, and
we thus need to resort to stronger tools.

Our techniques rely on \emph{concentration inequalities}
\cite{DuPa2009,BoLuMa2012a,McDiarmid1998,Janson2004b}. Most of the bounds we
obtain (for the number of steps $\kappa$ and the number of visited sites)
follow from a representation as a sum of random variables in which the
increments can be made independent by a simple and natural conditioning. The
bounds on the complexity of the algorithm \proc{Cone-Walk} are slightly
trickier to derive because there is no way to make the increments independent.

\medskip
For the sake of presentation, we introduce two simplifications which we remove
in Section~\ref{sec:relax_query}. First, we start by studying the walk in the
disc $\cD_n$ of area $n$ where the query is at the centre. These choices for
$\cD_n$ and $q$ ensure that for any $z\in\cD_n$ and any $r\le \sqrt{n/\pi},$ we
have $\disc(z,q,r)\subset\cD_n$. Note that since the distance to the aim is
decreasing, the disc is precisely the \emph{effective} domain where the walk
from $z$ and aiming at $q$ takes place.

Then, we introduce independence between the different regions of the domain by
replacing the collection of independent points $\bX_n$ by a (homogeneous)
Poisson point process $\ppp$ and consider $\DT(\ppp)$. Recall that a Poisson
point process of intensity $1$ is a random collection of points $\ppp \subset
\cD_n$ such that with probability one, all the points are distinct, for any two
Borel sets $R,S\subseteq \cD_n$, the number of points $|\ppp \cap R|$ is
distributed like a Poisson random variable whose mean is the area $\cA(R)$ of
$R$, and if $R\cap S=\varnothing$ then $|\ppp\cap R|$ and $|\ppp \cap S|$ are
independent.

On many occasions, it is convenient to consider $\ppp$ conditioned to have a
point located at $z\in \cD$ and we let $\ppp_z$ be the corresponding random
point set. Classical results on Poisson point processes ensure that $\ppp_z
\setminus \{z\}$ is distributed like $\ppp$, so that one can take $\ppp_z=\ppp
\cup \{z\}$, for $\ppp$ independent of $z$ \cite[see, e.g.,][Section
1.4]{bb-sgwn-09}.

\subsection{Preliminaries}
We establish the following notation (see Figure~\ref{fig:walk}). Let ${\mathbf
Z}=(Z_i, i>0)$ denote the sequence of stoppers visited during the walk with
$Z_0:=z$. Let $L_i=\|Z_iq\|$ denote the distance to the destination $q$. The
distance $L_i$ is strictly decreasing and the point set $\ppp$ is almost surely
finite, thus ensuring that the walk stops after a finite number of steps
$\kappa$, at which point we have $Z_\kappa=q$. For $x>0$, we also let
$\kappa(x)$ be the number of steps required to reach a point within distance $x$
of the query. Therefore $i< \kappa(x)$ if and only if $L_{i}> x$. The important
parameters needed to track the location and progress of the walk are the radius
$R_i$ such that $Z_{i+1}\in\partial\!\cone(Z_i,q,R_i)$, and the angle $\alpha_i$
between $Z_iq$ and $Z_iZ_{i+1}$. $\disc(Z_i,q,R_i)$ may contain several points
of $\ppp$, let $\tau_i$ denote
$|\disc(Z_i,q,R_i)\setminus\{Z_i,Z_{i+1}\}\cap{\ppp}|$ the number of such points
and $N_i$ the number of these points along with their Delaunay neighbours.

In order to compute the walk efficiently, the algorithm presented gathers a lot
of information. In particular, we access all of the points in
$\disc(Z_i,q,R_i)$ and their neighbours. For the analysis, we want to keep the
landscape as concise as possible, and so we define a filtration which only
contains the necessary information for the walk to be a measurable process. Let
$\cF_i$ denote the information consisting of (the $\sigma$-algebra generated
by) the locations of the points of $\ppp$ contained in $\cup_{j=0}^{i}
\disc(Z_j,q,R_j)$. Finally, we shall write $\omega_n$ to denote a sequence
satisfying $\omega_n\ge \log n$.

We often need to condition on the size of the largest empty ball within the
process $\ppp_n$. This is dealt with in the following lemma. 
\begin{lemma} \label{lem:largest_empty_disc}
    Let $b(\mathbf{x}, r)$ denote the closed ball of radius $r$ centred at
    $\mathbf{x}$. Then $\forall c >0$, $\xi>0$,
    $$
      \p\big(
      \exists x\in \cD_n 
        : 
      b(x, c \,\omega_n^{1/2 + \xi}) \cap \ppp_n = \varnothing
      \big)
      \leq \expo{- \omega_n^{1+\xi}}
    $$
    for $n$ sufficiently large.
\end{lemma}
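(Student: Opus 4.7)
The plan is to use a standard discretisation plus union bound argument, combined with the Poisson void probability formula. Set $r:= c\,\omega_n^{1/2+\xi}$. The idea is that the bad event ``some ball of radius $r$ is empty'' can be reduced, up to a loss of a factor $2$ in the radius, to the bad event ``some ball of radius $r/2$ centred at one of a finite discrete set of witness points is empty'', and the latter is amenable to a union bound.

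Concretely, first I would cover $\cD_n$ with a square grid of side length $r/(2\sqrt 2)$ and let $\{p_1,\dots,p_M\}$ be the grid points lying in $\cD_n$. Since $\cD_n$ has area $n$, we have $M=O(n/r^2)$. For any $x\in\cD_n$, the nearest grid point $p_i$ satisfies $\|x-p_i\|\le r/2$, so that $b(p_i,r/2)\subseteq b(x,r)$. Consequently, if some $b(x,r)$ with $x\in\cD_n$ is empty of points of $\ppp_n$, then one of the $b(p_i,r/2)$ is also empty.

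Next, I would bound $\p\bigl(b(p_i,r/2)\cap \ppp_n = \varnothing\bigr)$ using the Poisson void probability formula. The subtle point is a possible boundary effect: for $p_i$ near $\partial\cD_n$, the area $|b(p_i,r/2)\cap\cD_n|$ could a priori be much smaller than $\pi r^2/4$. However, because $\cD$ is smooth convex and $r=o(\sqrt n)$ (since $\omega_n$ is at most polylogarithmic, hopefully; at any rate $r/\sqrt n\to 0$), there is a constant $\alpha=\alpha(\cD)>0$ such that $|b(p_i,r/2)\cap\cD_n|\ge \alpha\,\pi r^2/4$ uniformly in $p_i\in\cD_n$ for $n$ large. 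This yields
\[
\p\bigl(b(p_i,r/2)\cap \ppp_n = \varnothing\bigr)
\;\le\; \expo{-\alpha\pi r^2/4}.
\]

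Union-bounding over the $M=O(n/r^2)$ witness points gives
\[
\p\bigl(\exists x\in\cD_n : b(x,r)\cap\ppp_n=\varnothing\bigr)
\;\le\; \frac{C\,n}{r^2}\,\expo{-\tfrac{\alpha\pi c^2}{4}\omega_n^{1+2\xi}}.
\]
Taking logarithms, it remains to check that $\tfrac{\alpha\pi c^2}{4}\omega_n^{1+2\xi} - \omega_n^{1+\xi} \ge \log n + O(1)$ for $n$ large enough. Since $\omega_n\ge \log n$, we have $\omega_n^{1+2\xi} = \omega_n^{\xi}\cdot\omega_n^{1+\xi} \gg \omega_n^{1+\xi}$ and also $\omega_n^{1+2\xi}\ge (\log n)^{1+2\xi}\gg \log n$, so the inequality holds for all $n$ large enough (depending on $c$ and $\xi$), which completes the argument. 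The only mild obstacle is the boundary argument, which is handled in one line by smoothness of $\cD$; everything else is routine covering + Poisson void.
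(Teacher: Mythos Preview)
The proposal is correct and takes essentially the same approach as the paper: discretise the domain (you use a grid of mesh $r/(2\sqrt 2)$, the paper uses a maximal packing by balls of radius $r/2$), reduce the empty-ball event to emptiness of one of finitely many witness balls, handle the boundary by smoothness of $\cD$, and finish with the Poisson void probability and a union bound. The only cosmetic difference is grid versus packing, and your grid argument is in fact slightly cleaner since it directly guarantees a witness centre within $r/2$ of any $x$.
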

\begin{proof}
    We have
    \begin{align*}
    \p(
        \exists x\in  \cD_n
        :
        b(x,c\,\omega_n^{1/2+\xi})\cap \ppp_n = \varnothing
    )
    &\le 
    \p(
        \exists B\in P: B\cap \ppp_n = \varnothing
        ),
    \end{align*}
    where $P$ is any maximal packing of $\cD$ with balls $B$ of radius 
    $\frac{1}{2} c \,\omega_n^{1/2+\xi}$ centred in $\cD_n$.
    If the radius of curvature of $\cD_n$ is lower bounded by
    $ c \,\omega_n^{1/2+\xi}$ (which happen for $n$ largr enough)
    such a ball $B$ contains a ball of radius  $\frac{1}{4} c
    \,\omega_n^{1/2+\xi}$ entirely inside $\cD_n$,
   For $n$ large
    enough, any such packing contains at most $n$ balls and we have
    \[  
        \p(
            \exists x \in \cD_n
            :
            b(x, c\,\omega_n^{1/2 +\xi})\cap \ppp_n = \varnothing
        )
     \le n \expo{-\pi \tfrac{1}{4^2}} c^2\omega_n^{1+2\xi}
     \le \expo{- \omega_n^{1 + \xi}}.\qedhere
    \]
\end{proof}
\subsubsection{The size of the discs}
\label{sec:size_of_discs}
If the search cone $\cone(Z_i,q,\infty)$ does not intersect any of the previous
discs, the region which determines $R_{i+1}$ is `fresh' and $R_{i+1}$ is
independent of $\cF_i$. Lemma~\ref{lem:independent-cones} provides a condition
which guarantees independence of the search cones. To take advantage of it,
we write $\xi:=2+\sqrt 2$, and for $i\ge 0$, define the event 
\begin{equation}
    G_i:=\{\forall j\le i+1, R_j< \omega_n/ \xi\}, 
\end{equation}
Then if the event $G_i^\star:=G_i\cap \{L_{i}\ge \omega_n\}$ occurs;
for every $j\le i$, the search-cone $\cone(Z_j,q,\infty)$ does not intersect
any of the regions $\disc(Z_k,q,R_k)$, $0\le k<j$, and the corresponding
variables $(R_j,\alpha_j)$, $0\le j\le i+1$ are independent. Although it might
seem like an odd idea, $G_i^\star$ does include some condition on $R_{i+1}$;
this ensures that on $G_i^\star$, we have $L_{i+1}>L_i-2R_{i+1}>0$, so that
$i+1$ is not the last step. 
So for $x>0$ we have
\begin{align}\label{eq:bound_Ri} 
    \p(R_{i+1}> x \,|\, \cF_i,G_{i}^\star) 
    & = \p(\ppp \cap \cone(Z_i,q,x) \setminus \{Z_i\} =\varnothing 
    \,|\, \cF_i, G_{i}^\star)\I{\xi x\le \omega_n} \notag \\
    & =\expo{-Ax^2} \I{\xi x\le \omega_n},
\end{align}
where $A$ denotes the area of $\cone(z,q,1)$ which is the shaded region in
Figure~\ref{fig:next}. Indeed, conditional on $\cF_i$ and $G_i^\star$,
$|\ppp\cap \cone(Z_i,q,x) \setminus \{Z_i\}|$ is a Poisson random variable with
mean $Ax^2$ where
\begin{equation}\label{eq:def_A}
    A := 2\left( \cos \frac{\pi}{8}\sin \frac{\pi}{8} + \frac{\pi}{8} \right)
      =  \frac{\sqrt{2}}{2} + \frac{\pi}{4}.
\end{equation}
We will repeatedly use the conditioning on $G_i$ to introduce independence, and
it is important to verify that $G_i$ indeed occurs with high probability. For
$G_i$ to fail, there must be a first step $j$ for which $R_j\ge \omega_n/\xi$.
Writing $G_i^c$ for the complement of $G_i$ and defining $G_{-1}$ to be a void
conditioning: provided that $i=O(n)$ (which will always be the case in the
following)
\begin{align}\label{eq:bound_good}
    \p(G_i^c) 
    &\le \sum_{0\le j\le i+1} \prob(R_j\ge \omega_n/\xi \, | \, G_{j-1})\notag\\
    &\;\;\le \expo{\log O(n)- A \omega_n^2/\xi^2} \notag\\
    &\;\;\le \expo{-\omega_n^{3/2}}
\end{align}
for all $n$ large enough since $\omega_n\ge \log n$.

\medskip
\noindent\textbf{Remark about the notation.} 
It is convenient to work with an ``ideal'' random variable that is not
constrained by the location of the query or artificially forced to be at most
$\omega_n/\xi$, and we define $\cR$ by $\p(\cR \ge x) = \expo{-A x^2}$ for $x\ge
0$. In the course of the proof, we use multiple other such ideal random
variables, to distinguish them from the ones arising from the actual process,
we use calligraphic letters to denote them.
\subsubsection{The progress for one step}
We now focus on the distribution of the angle $\angle q Z_i Z_{i+1}$
and by extension the progress made during one step in the walk.
Let $\cone_\alpha(z,q,r)$ be the 
cone of half angle $\alpha$
with the same apex and axis as $\cone(z,q,r)$. For $S\subset \mathbb 
R^2$, let $\cA(S)$ denote its area. On the event $G_{i}^\star$, 
$Z_{i+1}\ne q$ and $\alpha_{i+1}$ is truly random and its
distribution is symmetric and given by
(see Figure~\ref{fig:x_progress}):
\begin{align}\label{eq:dist_alpha}
  \p(|\alpha_{i+1}| < x \; | \;R_{i+1}=r, \cF_i, G_i^\star )
  & = \lim_{\varepsilon\to 0}
        \frac{\cA(\cone_x(Z_i,q,r+\varepsilon)\setminus 
        \cone_x(Z_i,q,r))}{\cA(\cone(Z_i,q,r+\varepsilon)\setminus 
        \cone(Z_i,q,r))}\notag\\
  &= \lim_{\varepsilon \rightarrow 0}
            \frac{((r+\varepsilon)^2-r^2)(x + \frac{1}{2}\sin 2x )}
                 {((r+\varepsilon)^2-r^2)(
                     \frac{\pi}{8}+\frac{\sqrt{2}}{4})}\notag\\
  &= \frac{8}{\pi + 2\sqrt{2}} \left(x + \frac{\sin 2x}{2} \right).
\end{align}
So in particular, conditional on $\cF_i$ and $G_i^\star$, $\alpha_{i+1}$ is
independent of $R_{i+1}$. We will write $\alpha$ for the `ideal' angle
distribution given by \eqref{eq:dist_alpha}, and enforce that $\cR$ and
$\alpha$ be independent.

\begin{figure}[t]
\centering
\includegraphics[scale=1]{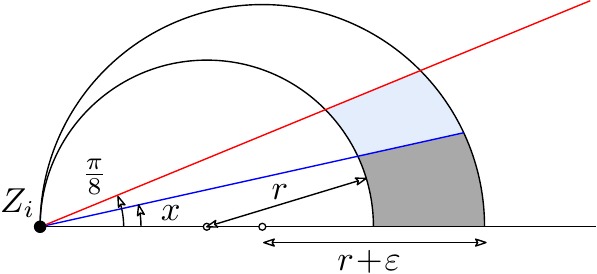}
\caption{\label{fig:x_progress}For the angle to
    be smaller than $x$ given $R_{i+1}\in[r, r+\varepsilon]$, the stopper must
    fall within the dark shaded region}
\end{figure}

\subsection{Geometric and combinatorial parameters}

In this section we will build the elements required to bound the algorithmic
complexity of the $\proc{Cone-Walk}$ algorithm. We begin by bounding the number
of \emph{steps} (or equivalently, the number of stoppers) required by the walk
process to reach the destination. We will then bound the number of vertices
\emph{visited} by the walk process, recalling that this will involve bounding
the number of \emph{intermediary vertices} within the discs $\disc(Z_i,q,R_i)$
at each step. The final part of the proof will be to bound the number of
vertices \emph{accessed} by the $\proc {Cone-Walk}$ algorithm when constructing
the sequence of stoppers and intermediary vertices. The vertices \emph{accessed}
will include all of the vertices visited, and their 1-hop neighbourhood.

\subsubsection{The maximum number of vertices accessed during a step}
\label{sec:locality}

At each step during a walk, we do not a priori access a bounded number of sites
when performing a search for the next stopper. Such a bound is important to
limit the number of neighbourhoods that may be accessed during one step, since
we note that the maximum number of vertices accessed during one step explicitly
provides an upper bound on the number of neighbourhoods accessed. A easy bound
of $\log^{1+\varepsilon} n$, for any $\varepsilon>0$ may be obtained when
considering pairs of start and destination points at least $\sqrt{\log n}$ away
from the border of $\partial\cD$. However, we opt to explicitly take care of
border effects, giving us a slightly weaker bound that can be applied
everywhere.
\begin{proposition}\label{prop:max_sites_in_one_step}  
  Let $M_\text{max}$ be the maximum number of vertices accessed during any step
  in any walk. Then 
  $$
    \p\bigg(  M_\text{max} \ge \omega_n^{3+\xi} \bigg) 
    \leq
    2\expo{-\omega_n^{1+\xi/4}}.
  $$
\end{proposition}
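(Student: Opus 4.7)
The plan is to reduce the bound on $M_\text{max}$ to a uniform count of Poisson points in small discs, using Lemma~\ref{lem:largest_empty_disc} as the lever that simultaneously controls search-disc radii and Delaunay edge lengths.

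First I would set $\rho_n := c\,\omega_n^{1/2 + \xi/4}$ for a suitable constant $c$, and invoke Lemma~\ref{lem:largest_empty_disc} (applied with exponent parameter $\xi/4$) to obtain the event $E_1$ that every closed ball of radius $\rho_n$ centred in $\cD_n$ intersects $\ppp$; the lemma gives $\p(E_1^c) \le \expo{-\omega_n^{1+\xi/4}}$ for $n$ large. On $E_1$, two purely geometric facts control each step. First, $\cone(z,q,R)$ contains an inscribed ball of radius $2R\sin(\pi/8)/(1+\sin(\pi/8))$, which on $E_1$ must meet $\ppp$, forcing $R \le C_1\rho_n$ for a constant depending only on the half-angle $\pi/8$. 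Second, every Delaunay edge $uv$ admits an empty circumscribing disc of radius at least $\|uv\|/2$, so on $E_1$ every edge of $\DT(\ppp)$ satisfies $\|uv\| \le 2\rho_n$. Combining these, the set of vertices accessed during any step originating at $z\in \ppp$ lies inside a ball of radius at most $C_2\rho_n$ centred near $z$, for an absolute constant $C_2$.

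It then remains to bound, uniformly in $z\in\cD_n$, the number of Poisson points in such a ball. I would tile $\cD_n$ by axis-aligned squares of side $\rho_n$, producing $O(n/\rho_n^2)$ squares, and observe that every ball of radius $C_2\rho_n$ meets only $O(1)$ such squares. Each square $Q$ contains a Poisson count with mean $\rho_n^2 = c^2\omega_n^{1+\xi/2}$, and the standard Chernoff tail for the Poisson distribution yields, for an appropriate constant $c_3$,
\begin{equation*}
    \p\bigl(|\ppp \cap Q| \ge c_3\,\omega_n^{3+\xi}\bigr)
    \le
    \expo{-\,c_3\,\omega_n^{3+\xi}\log\omega_n}.
\end{equation*}
A union bound over the $O(n)$ squares defines an event $E_2$ on which no such square holds more than $c_3\omega_n^{3+\xi}$ points, with $\p(E_2^c) \le \expo{-\omega_n^{1+\xi/4}}$ for $n$ large. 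On $E_1 \cap E_2$, the accessed set for every step of every walk is contained in $O(1)$ squares and thus has size at most $\omega_n^{3+\xi}$ after absorbing constants into $c_3$. Summing the failure probabilities of $E_1$ and $E_2$ gives the claimed bound $2\,\expo{-\omega_n^{1+\xi/4}}$.

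The main delicacy is the calibration of polynomial exponents so that the cone's inscribed ball, the Delaunay edges, and the tile scale all align at $\rho_n = c\,\omega_n^{1/2+\xi/4}$ with enough slack for the Poisson tail to beat the union bound over squares; once this alignment is fixed, the rest is classical Poisson concentration. A secondary subtlety is the boundary of $\cD_n$, where tiles may be truncated, but this is absorbed using the same radius-of-curvature hypothesis on $\partial\cD$ that appears in Lemma~\ref{lem:largest_empty_disc}.
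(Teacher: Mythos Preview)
Your approach differs from the paper's: the paper factors $M_{\text{max}} \le \tau_{\text{max}}\cdot \Delta_\ppp$ (points in the search disc times the maximum Delaunay degree), bounds $\tau_{\text{max}}$ via Lemma~\ref{lem:tau_i_is_small}, and bounds $\Delta_\ppp$ via Proposition~\ref{prop:max_degree} in the appendix. Your route through a uniform bound on Delaunay edge lengths is more direct and, away from $\partial\cD_n$, would in fact yield the stronger bound $\omega_n^{1+\varepsilon}$ that the paper itself alludes to in the paragraph preceding the proposition.

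The gap is at the boundary. The step ``every Delaunay edge $uv$ admits an empty circumscribing disc of radius at least $\|uv\|/2$, so on $E_1$ every edge satisfies $\|uv\|\le 2\rho_n$'' does not follow, because Lemma~\ref{lem:largest_empty_disc} only controls balls \emph{centred in} $\cD_n$. For a Delaunay edge near the convex hull of $\ppp$, the witnessing empty circumcircle may have its centre outside $\cD_n$ and meet $\cD_n$ only in a sliver of arbitrarily small area; the event $E_1$ then says nothing about that edge, and edges of length much larger than $\rho_n$ can and do occur along $\partial\cD_n$. The radius-of-curvature hypothesis you invoke only guarantees that a ball of radius $\rho_n$ centred in $\cD_n$ contains a quarter-disc inside $\cD_n$; it does not manufacture a $\rho_n$-ball centred in $\cD_n$ inside an empty circumcircle whose centre lies far outside. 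Hence the accessed set need not be contained in a ball of radius $C_2\rho_n$ when the search disc lies near $\partial\cD_n$, and your Poisson count over tiles misses the far neighbours.

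This boundary phenomenon is exactly why the paper's exponent is $3+\xi$ rather than $1+\xi$: the surplus $\omega_n^{2}$ is the maximum-degree bound of Proposition~\ref{prop:max_degree}, whose proof spends most of its effort (Lemmas~\ref{lem:alpha_lower}--\ref{lem:close_to_border}) on vertices within $\sqrt{\log n}$ of $\partial\cD_n$. To complete your argument you would have to reproduce that boundary analysis, or else separately bound the number of long Delaunay edges incident to vertices in the search disc---which is essentially the same task.
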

In the following, we note that $M_\text{max}$ is bounded by $\tau_\text{max}
\cdot \Delta_\ppp$, where $\Delta_\ppp$ gives the maximum degree of any vertex
contained within $\DT(\ppp)$ and $\tau_\text{max}$ is the maximum number sites
contained within any step in any instance of cone walk. We thus focus on
bounding $\tau_\text{max}$, and our result will follow directly from the proof
of Proposition~\ref{prop:max_degree} in Appendix~\ref{appendix:max_degree}.
\begin{lemma} \label{lem:tau_i_is_small}    
$$
  \p(\tau_\text{max} > \omega^{1+\xi}_n) \leq \expo{-\omega_n^{1+\xi/3}}.
$$
\end{lemma}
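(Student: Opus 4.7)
The plan is to reduce to a union bound via Lemma~\ref{lem:invariance} and then analyse a single step by splitting on the size of $R_i$.

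For the reduction, Lemma~\ref{lem:invariance} provides an arrangement with at most $2n^4$ cells such that the sequence of stoppers from a fixed start depends only on which cell contains $q$. Combined with at most $n$ start vertices in $\ppp\cap\cD_n$ and at most $n$ steps per walk, controlling the worst case over $z$ and $q$ reduces to a union bound over at most $2n^6$ (walk, step) pairs. I would also restrict attention to the good event $G$ of~\eqref{eq:bound_good} (on which every $R_i\le\omega_n/\xi$), whose complement has probability $\expo{-\Omega(\omega_n^2)}$, already much smaller than the target.

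For a fixed (walk, step $i$) pair I would pick a threshold $r_0:=c\,\omega_n^{(1+\xi/3)/2}$ for a large enough constant $c$, and split on whether $R_i\le r_0$. By~\eqref{eq:bound_Ri}, $\p(R_i>r_0\mid\cF_{i-1},G_{i-1}^\star)\le\expo{-Ar_0^2}=\expo{-Ac^2\omega_n^{1+\xi/3}}$. On $\{R_i\le r_0\}$, conditional on $R_i$ and the past, I would argue that $\tau_i$ is stochastically dominated by $\Po(\pi R_i^2)$: Lemma~\ref{lem:independent-discs} confines every overlap of $\disc(Z_i,q,R_i)$ with an earlier disc to the latter's (empty) cone, and the strict monotonicity of $L_i$ rules out each previous stopper $Z_{j+1}$ (since $\|Z_{j+1}q\|=L_{j+1}>L_i$ while every point of $\disc(Z_i,q,R_i)$ lies at distance at most $L_i$ from $q$). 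A standard Poisson Chernoff bound then gives
\[
\p\bigl(\Po(\pi r_0^2)>\omega_n^{1+\xi}\bigr)\le\expo{-\Omega\bigl(\omega_n^{1+\xi}\log\log n\bigr)},
\]
negligible beside the $R_i$-tail contribution since $\omega_n^{1+\xi}\gg\omega_n^{1+\xi/3}$.

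Summing over the $\le 2n^6$ (walk, step) pairs and adding $\p(G^c)$ gives a total probability bounded by $2n^6\cdot\expo{-Ac^2\omega_n^{1+\xi/3}}+\expo{-\Omega(\omega_n^2)}$; for $c$ chosen large enough that $Ac^2>1$, this is at most $\expo{-\omega_n^{1+\xi/3}}$ for $n$ large, since $\omega_n\ge\log n$ absorbs the $n^6$ prefactor into the exponent. The main subtlety is the clean conditional-Poisson description of $\tau_i$: although $\disc(Z_i,q,R_i)$ genuinely overlaps earlier discs, the combination of Lemma~\ref{lem:independent-discs} and the decreasing sequence $(L_i)$ ensures that no already-observed point of $\ppp$ lies in any such overlap, so that no correction term spoils the Poisson bound. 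Once this geometric bookkeeping is handled, the rest is a routine two-level tail estimate.
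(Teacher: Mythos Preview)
Your reduction via Lemma~\ref{lem:invariance} does not do what you need. That lemma only guarantees that the \emph{sequence of stoppers} $(Z_i)_{i\ge 0}$ is constant as $q$ varies within a cell; it says nothing about $R_i$, the disc $\disc(Z_i,q,R_i)$, or $\tau_i$. Indeed, with $Z_i$ and $Z_{i+1}$ fixed, $R_i$ is determined by $R_i=\|Z_iZ_{i+1}\|^2/\bigl(2(Z_{i+1}-Z_i)\cdot \widehat{Z_iq}\bigr)$ and so varies continuously with the direction $\widehat{Z_iq}$; the disc rotates about $Z_i$ and the set of intermediate vertices it captures changes. Hence $\tau_i$ is \emph{not} constant on a cell, and bounding it for one representative $q$ per cell does not bound $\tau_{\max}=\sup_{z,q}\max_i \tau_i$. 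Your union bound over $2n^6$ pairs therefore misses configurations. A secondary issue is that \eqref{eq:bound_Ri} and \eqref{eq:bound_good} are derived for the special geometry of Section~\ref{sec:analysis} (query at the centre, and conditionally on $G_i^\star$ which includes $L_i\ge\omega_n$); they are not available uniformly over all $z,q\in\cD_n$ and all steps, in particular not for the last steps of a walk.

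The paper's proof sidesteps all of this by never discretising the set of walks. It works with two \emph{global} events on $\ppp$: (A) no ball of radius $\tfrac12\omega_n^{1/2+\xi}$ centred in $\cD_n$ is empty (so every search cone, hence every $R_i$, is small, since a large cone would contain a large empty ball; this is Lemma~\ref{lem:largest_empty_disc}); and (B) no ball of that radius contains more than $\omega_n^{1+2\xi}$ points (a grid/Poisson-tail argument). On $A\cap B$ the bound $\tau_{\max}\le\omega_n^{1+2\xi}$ is deterministic, uniformly over all $z$ and $q$, and $\p(A^c)+\p(B^c)\le\expo{-\omega_n^{1+\xi/3}}$. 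This is both shorter and avoids the walk-dependent conditioning entirely; the key idea you are missing is to replace the per-step tail bound on $R_i$ by the single geometric observation that a large $R_i$ forces a large empty ball somewhere in $\cD_n$.
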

\begin{proof}
  Let $A$ be the event that the maximum disc radius for any step in any walk is
  bounded by $\tfrac{1}{2}\omega_n^{1/2+\xi}$ and let $B$ be the event that
  every ball $b(x, \tfrac{1}{2}\omega_n^{1/2+\xi})$ contains fewer than
  $\omega_n^{1+2\xi}$ points of $\ppp$, for $x\in\cD$. We have, for $n$ large enough,
  \begin{align*}
    \p(\tau_\text{max} > \omega^{1+2\xi}_n) 
    & \leq \p(\tau_{max} > \omega_n^{1+2\xi} \mid A\cap B ) + \p(A^c) + \p(B^c) \\
    & \leq \expo{-\omega_n^{1+\xi}} + \expo{-\omega_n^{1+\xi}}.
  \end{align*}
  Note that the bound on $\p(A^c)$ is implied by
  Lemma~\ref{lem:largest_empty_disc} since a large disc implicitly has a large
  empty cone. For the bound on $\p(B^c)$, we imagine splitting $\cD$ into a
  uniform grid with squares of side $\tfrac{1}{2}\omega_n^{1+\xi}$. The proof
  follows by noting that every ball of radius $\tfrac{1}{2}\omega_n^{1+\xi}$ is
  contained in a group of at most four adjacent squares, each of which must
  contain at least $\tfrac{1}{4}\omega^{1+2\xi}_n$ sites. We then use the fact
  that $n\p(\Po(\tfrac{1}{4}\omega_n^{1+\xi}) \ge \frac{1}{4}\omega_n^{1+2\xi}) 
  \leq \expo{-\omega_n^{1+\xi}}$ for $n$ large enough. We omit the details.
\end{proof}

\subsubsection{The number of steps in the walk} 
Recall that a new step is defined each time a new stopper is visited. We will
start with a first crude estimate for the decrease in distance after a given
number of steps. Note that $L_i=\|Z_iq\|$ and  $\alpha_i$ denotes the angle
between $Z_iZ_{i+1}$ and $Z_i q$. Simple geometry implies (see
Figure~\ref{fig:dist-progress}):
\begin{figure}[t]
    \centering
    \includegraphics[scale=1]{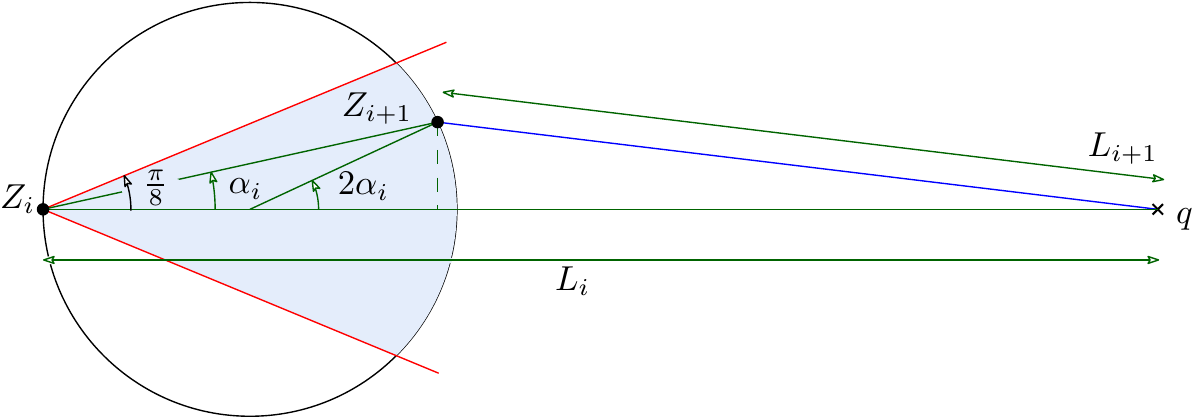}
    \caption{Computing distance progress at step $i$. 
    \label{fig:dist-progress}}
\end{figure}
\begin{align}\label{eq:bound_Li}
    L_i - R_i (1+\cos(2\alpha_i)) 
    \le L_{i+1}
    &=\sqrt{(L_i-R_i(1+\cos(2\alpha_i)))^2 + R_i^2 \sin^2(2\alpha_i) }\notag\\
    &\le L_i - R_i (1+\cos(2\alpha_i)) + 2 \frac{R_i^2}{L_i},
\end{align}
since $\sqrt{1-x}\le 1- x/2$ for any $x\in [0,1]$. {As a consequence
\begin{eqnarray}\label{eq:bound_distance1}
    L_0 - \sum_{j=0}^{i-1} R_i (1+\cos(2\alpha_i)) \le & L_i 
    & \le L_0 - \sum_{j=0}^{i-1} R_i (1+\cos(2\alpha_i)) + \frac 2{\omega_n} 
    \cdot \sum_{j=0}^{i-1} R_i^2.
\end{eqnarray}
In particular, since $\omega_n\to\infty$, after $i$ steps, the expected
distance $\e[L_i]$ to the aim $q$ should not be far from $L_0 - i \expected[\cR
(1+\cos(2\alpha))].$ Furthermore, conditional on $G_i$, and for $i$ such that
$L_i\ge \omega_n$, the summands involved in Equation \eqref{eq:bound_distance1}
are independent, bounded by $2\omega_n$ and have bounded variance, so that the
sum should be highly concentrated about its expected value
\cite{DuPa2009,BoLuMa2012a,McDiarmid1998}. In other words, one expects that for
$i$ much larger than $L_0/\e[\cR (1+\cos2\alpha)]$, it should be the case that
$L_i\le \omega_n$ with fairly high probability. Making this formal constitutes
the backbone of our proof.

\begin{lemma}          
\label{lem:distance}Let $z\in \cD$, suppose that $\ell\ge 1$ is such 
that $L_0=\|zq\|\ge (\ell +1) \omega_n$. Consider $\DT(\ppp_z)$. 
There exists a constant $\eta>0$ such that
$$\p\left(L_{0}-L_{\ell} \le \ell\, \e[\cR]/2 \right) \le \expo{-\eta 
\ell/\omega_n} +  \expo{-\omega_n^{3/2}}.$$
\end{lemma}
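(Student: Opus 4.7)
The plan is to restrict attention to the regularity event $G_{\ell-1}$, whose complement is controlled by \eqref{eq:bound_good} and contributes the $\expo{-\omega_n^{3/2}}$ term, and then to bound $L_0 - L_\ell$ from below by a sum of (approximately) i.i.d.\ increments to which Bernstein's inequality applies. On $G_{\ell-1}$ every search-cone radius satisfies $R_i \le \omega_n/\xi$, so each step shortens the distance by at most $2R_i \le 2\omega_n/\xi$; since $\xi = 2+\sqrt 2 > 2$ and $L_0 \ge (\ell+1)\omega_n$, iteration gives $L_i \ge \omega_n$ for every $0 \le i \le \ell$. Hence every $G_i^\star$ occurs, and by \eqref{eq:bound_Ri}--\eqref{eq:dist_alpha} the pairs $(R_i,\alpha_i)_{0\le i \le \ell-1}$ are independent, with $R_i$ distributed as $\cR$ capped at $\omega_n/\xi$ and $\alpha_i$ independent of $R_i$, distributed as the ideal angle $\alpha$.

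Using the upper inequality in \eqref{eq:bound_Li} together with $L_i \ge \omega_n$, on $G_{\ell-1}$ I would write
\[
L_0 - L_\ell \;\ge\; S_1 - \frac{2}{\omega_n}\, S_2,
\qquad
S_1 := \sum_{i=0}^{\ell-1} R_i(1+\cos 2\alpha_i),
\qquad
S_2 := \sum_{i=0}^{\ell-1} R_i^2.
\]
The summands of $S_1$ lie in $[0, M]$ with $M := 2\omega_n/\xi$ and have mean $\mu := \e[R_i(1+\cos 2\alpha_i)]$, which strictly exceeds $\e[\cR]$ for $n$ large (since $|\alpha| \le \pi/8$ forces $\e[1+\cos 2\alpha] \ge 1 + 1/\sqrt 2$, and truncating $\cR$ at $\omega_n/\xi$ perturbs its mean only by $O(\expo{-c\omega_n^2})$). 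It therefore suffices to establish, each with probability at least $1 - \expo{-\eta\ell/\omega_n}$, both $S_1 \ge \tfrac{3}{4}\ell\mu$ and $(2/\omega_n)\, S_2 \le \ell\,\e[\cR]/8$, since their intersection forces $L_0 - L_\ell \ge \ell\,\e[\cR]/2$.

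Both statements will follow from Bernstein's inequality, which for a sum $S$ of $\ell$ independent centred bounded variables with per-term variance $\sigma^2$ and range $M$ reads $\p(S \le -t) \le \exp(-t^2/(2\ell\sigma^2 + 2Mt/3))$. For $S_1$, one has $\sigma^2 = O(1)$ and $M = O(\omega_n)$: taking $t = \Theta(\ell)$ makes the ``$Mt/3$'' term dominate the variance contribution in the denominator and yields the desired tail $\expo{-\eta\ell/\omega_n}$. The same inequality applied to $S_2$, whose summands lie in $[0, (\omega_n/\xi)^2]$ and whose mean is $O(\ell)$, gives $(2/\omega_n)\, S_2 = O(\ell/\omega_n)$ with equally small failure probability. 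Combining via a union bound and adding $\p(G_{\ell-1}^c)$ yields the claim.

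\textbf{Main obstacle.} The delicate point is obtaining the denominator $\omega_n$ (rather than $\omega_n^2$) in the exponent: plain Hoeffding would give only $\expo{-\Theta(\ell/\omega_n^2)}$, insufficient once $\omega_n \gg \sqrt{\ell}$. This forces use of Bernstein, exploiting that the ideal $\cR$ has bounded variance independent of $n$, combined with the choice $t = \Theta(\ell)$ so that the sub-exponential correction dominates the Gaussian one. A secondary subtlety is that on $G_{\ell-1}$ the marginals of the $R_i$'s are truncated rather than pure $\cR$; this is cleanest handled by coupling the walk with an exogenous i.i.d.\ sequence of truncated ideal pairs agreeing with the walk throughout $G_{\ell-1}$, which reduces everything to standard i.i.d.\ concentration.
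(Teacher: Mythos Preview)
Your proposal is correct and follows the same overall strategy as the paper: condition on the regularity event, obtain i.i.d.\ increments, and apply a Bernstein-type inequality so that the bounded-variance term (rather than the crude range) governs the exponent, yielding $\expo{-\eta\ell/\omega_n}$ instead of the weaker $\expo{-\Theta(\ell/\omega_n^2)}$ that Hoeffding would give. Your identification of this as the main obstacle is exactly right.

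The paper, however, takes a shorter route. Instead of the refined decomposition $L_0-L_\ell \ge S_1 - (2/\omega_n)S_2$ coming from \eqref{eq:bound_Li}, it uses the cruder geometric bound $L_i - L_{i+1} \ge R_i$ (valid on the conditioning event, since there $R_i/L_i < 1/\xi$ is small enough; see Figure~\ref{fig:dist-progress}). This gives directly $L_0-L_\ell \ge \sum_{j=0}^{\ell-1} R_j$, so a \emph{single} application of Bernstein to $\sum R_j$ with $t=\Theta(\ell)$ suffices, and neither the angles $\alpha_i$ nor the correction sum $S_2$ enter the argument at all. Your route works, but costs an extra concentration step and forces you to track the angle distribution, which the paper sidesteps entirely.
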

\begin{proof}We use the crude bounds $R_i \le L_i-L_{i+1}\le 2R_i$
(see Figure~\ref{fig:dist-progress}).   
It follows that
\begin{align*}
  \p(L_0-L_{\ell} \le \ell \,\e[\cR] /2)
  & \le \p(L_0-L_{\ell} \le \ell\, \e[\cR]/2\,|\, G_\ell) 
   + \p(G_\ell^c)\\
  & \le \p\Bigg(\sum_{j=0}^{\ell-1} R_j \le \frac \ell 2 \e [\cR] 
  \,\Bigg|\, G_\ell\Bigg)
  + \expo{-\omega_n^{3/2}},
\end{align*}
by \eqref{eq:bound_good}, since the constraint on $\ell$ imposes that
$\ell=O(\sqrt n\,)$. Now, since $L_0\ge (\ell+1)\omega_n$ and $\xi>2$, on the
event $G_\ell$, we have $L_i\ge \omega_n$ for $0\le i\le \ell$ so that
$G_\ell^\star$ occurs: conditional on $G_\ell$, the search cones do not
intersect and the random variables $R_j$, $0\le j\le \ell$ are independent and
identically distributed (see Lemma~\ref{lem:independent-cones}). Furthermore, we have
\begin{align*}
    \e[R_j\,|\, G_\ell] 
    & = \int_0^{\infty} \p\pth{R_j\ge x\,|\, G_\ell} dx\\
    & \ge \int_0^{\omega_n/\xi} \expo{-Ax^2} dx\\
    & \ge \e[\cR] - \expo{-\omega_n^{3/2}},
\end{align*}
for all $n$ large enough.
It follows that for all $n$ large enough, by Theorem~2.7 of 
\cite[][p.\ 203]{McDiarmid1998}
\begin{align*}
    \p\Bigg( \sum_{j=0}^{\ell-1} R_j \le \frac \ell 2 \e[\cR] \,\bigg|\, 
    G_\ell\Bigg) 
    & \le \p\Bigg( \sum_{j=0}^{\ell-1} (R_j - \e[R_j\,|\, G_\ell]) \le - 
    \frac \ell 3 \e[R_0\,|\, G_\ell] \,\Bigg| \, G_\ell\Bigg)\\
    & \le \expo{-\frac{t^2}{2 \ell\,
    \var(R_0\,|\, G_\ell)+2t\omega_n/3}}\qquad t=\ell\, 
    \e[R_0\,|\, G_\ell]/3\\
    & \le \expo{-\eta \ell/\omega_n},
\end{align*}
for some constant $\eta>0$ independent of $\ell$ and $n$. 
\end{proof}
The rough estimate in Lemma~\ref{lem:distance} may be significantly 
strengthened, and the very representation in \eqref{eq:bound_distance1} 
yields a bound on the number 
of search cones or steps that
are required to get within distance $\omega_n$ of the query
point $q$. (If the starting site $z$ satisfies
$L_0=\|zq\|\le \omega_n$, then this phase does not contain
any step.) 
    
\begin{proposition}
\label{pro:number_steps}Let $z\in \cD_n$, and let $\kappa(\omega_n)$ denote the
number of steps of the walk to reach a site which is within
distance $\omega_n$ of $q$ in $ \ppp_z \cup \{q\}$ when
starting from the site $z\in \ppp_z$ at distance
$L_0=\|zq\|\ge \omega_n$. Then 
$$
\p\pth{\left|\kappa(\omega_n)-\frac{L_0}{\e[\cR (1+\cos 2\alpha)]}\right|
\ge 2 \omega_n^2 \sqrt{2 L_0} + \omega_n } \le 4
\expo{-\omega_n^{3/2}}.
$$ 
\end{proposition}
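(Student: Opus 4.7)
The plan is to recast $\kappa(\omega_n)$ in terms of a sum of i.i.d.\ increments and apply a Bernstein-type concentration inequality. Set $\mu:=\e[\cR(1+\cos 2\alpha)]$, $S_k := \sum_{j=0}^{k-1} R_j(1+\cos(2\alpha_j))$, and $T_k:= \sum_{j=0}^{k-1} R_j^2$. Since $(L_i)$ is strictly decreasing, $\{\kappa(\omega_n)\le k\}=\{L_k\le \omega_n\}$, so the representation~\eqref{eq:bound_distance1} yields the two inclusions
\begin{align*}
    \{\kappa(\omega_n)\le k\} &\subseteq \{S_k\ge L_0-\omega_n\},\\
    \{\kappa(\omega_n)> k\}   &\subseteq \{S_k< L_0-\omega_n + (2/\omega_n)\,T_k\}.
\end{align*}
With $k^* := \lceil L_0/\mu \rceil$ and $\Delta := 2\omega_n^2\sqrt{2L_0}+\omega_n$, controlling $\kappa$ reduces to bounding $\p(S_{k^*-\Delta}\ge L_0-\omega_n)$ (the lower tail) and $\p(S_{k^*+\Delta} < L_0-\omega_n + (2/\omega_n)T_{k^*+\Delta})$ (the upper tail).

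I would condition on $G_{k^*+\Delta}$, which by~\eqref{eq:bound_good} has probability at least $1-\expo{-\omega_n^{3/2}}$. On $G_{k^*+\Delta}$ the pairs $(R_j,\alpha_j)$ are i.i.d.\ (as in Lemma~\ref{lem:distance}), each summand of $S_k$ is bounded by $2\omega_n/\xi$, the variance per summand is a constant $\sigma^2=O(1)$, and $\e[R_j(1+\cos 2\alpha_j)\mid G] = \mu - O(\expo{-\omega_n^{3/2}})$ by the same truncation computation that appeared in Lemma~\ref{lem:distance}. For the lower tail, the inclusion forces $S_{k^*-\Delta}$ to exceed its conditional mean, which is at most $L_0-\Delta\mu+O(1)$, by at least $\Delta\mu-\omega_n-O(1)$, i.e.\ by order $\omega_n^2\sqrt{L_0}$. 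Bernstein's inequality (Theorem~2.7 of~\cite{McDiarmid1998}) then gives a bound of the form $\expo{-c\,(\omega_n^2\sqrt{L_0})^2/(k^*\sigma^2+\omega_n^3\sqrt{L_0})}$, which across the relevant range $L_0\in[\omega_n,O(\sqrt n\,)]$ is comfortably below $\expo{-\omega_n^{3/2}}$.

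The upper tail is more delicate because the naive deterministic bound $R_j^2\le (\omega_n/\xi)^2$ would make $(2/\omega_n)T_k$ swamp the main drift $\Delta\mu$. The plan is to apply Bernstein a second time, now to $T_{k^*+\Delta}$, to show $T_{k^*+\Delta}\le 2v(k^*+\Delta)$ with overwhelming probability, where $v:=\e[\cR^2]$ is a constant. On that event $(2/\omega_n)T_{k^*+\Delta}\le 4v(k^*+\Delta)/\omega_n$ is negligible compared to $\Delta\mu$, so applying Bernstein once more to $S_{k^*+\Delta}$ (whose conditional mean is at least $L_0+\Delta\mu-o(1)$) gives a bound again of order $\expo{-\omega_n^{3/2}}$. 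A union bound over the four bad events (both tails of $S$, the tail of $T$, and the failure of $G_{k^*+\Delta}$) produces the stated factor $4\expo{-\omega_n^{3/2}}$.

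The main obstacle is the simultaneous control of $S$ and $T$ in the upper tail: the quadratic correction $T_k$ in~\eqref{eq:bound_distance1} is what prevents the result from being an immediate corollary of Lemma~\ref{lem:distance}. A secondary subtlety is making the argument uniform across the full range $L_0\in[\omega_n,O(\sqrt n\,)]$: when $L_0\sim\omega_n$ the lower tail is vacuous since $\kappa\ge 0$ and the additive $+\omega_n$ in $\Delta$ already covers it, while for $L_0=\Theta(\sqrt n\,)$ conditioning on $G_{k^*+\Delta}$ remains legitimate because $k^*+\Delta=O(\sqrt n\,)$ is compatible with the union bound~\eqref{eq:bound_good}.
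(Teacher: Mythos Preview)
Your overall plan---reduce to the representation~\eqref{eq:bound_distance1}, condition on $G_{k^*+\Delta}$, and apply Bernstein---matches the paper's, and your treatment of the lower tail is fine. The upper tail, however, contains a genuine gap.

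The sentence ``On that event $(2/\omega_n)T_{k^*+\Delta}\le 4v(k^*+\Delta)/\omega_n$ is negligible compared to $\Delta\mu$'' is not true across the full range of $L_0$. With $k^*\sim L_0/\mu$ one has
\[
\frac{4v(k^*+\Delta)}{\omega_n}\;\sim\;\frac{4v}{\mu}\cdot\frac{L_0}{\omega_n},
\qquad
\Delta\mu\;\sim\;2\sqrt 2\,\mu\,\omega_n^2\sqrt{L_0},
\]
so your comparison requires $\sqrt{L_0}=o(\omega_n^3)$, i.e.\ $L_0=o(\omega_n^6)$. But $L_0$ ranges up to the diameter of $\cD_n$, which is $\Theta(\sqrt n)$, and with $\omega_n=\log n$ this fails badly. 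In that regime the quadratic correction $(2/\omega_n)T_k$ swamps the drift gain $\Delta\mu$, and your Bernstein bound on $S_{k^*+\Delta}$ no longer closes. The paper runs into exactly this obstruction: after the first Bernstein application it observes that the resulting $|k_0-k_1|$ is $O(L_0/\omega_n)$, ``which is not strong enough to prove the claim.''

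What you are missing is the dyadic refinement. The key is that \eqref{eq:bound_Li} actually gives $L_{i+1}\le L_i - R_i(1+\cos 2\alpha_i) + 2R_i^2/L_i$, with the \emph{current} distance $L_i$ in the denominator rather than the uniform lower bound $\omega_n$. The paper exploits this by splitting $\kappa(\omega_n)=\sum_{j=1}^{j_0}\big[\kappa(L_0/2^j)-\kappa(L_0/2^{j-1})\big]$ and, on the $j$-th stretch, replacing $\omega_n$ by $L_0 2^{-j}$ in the correction. This makes the accumulated quadratic error in each stretch of order $1$ rather than $L_0/\omega_n$, and the per-stretch Bernstein deviations $t_j=\omega_n^2\sqrt{L_0/2^j}$ sum geometrically to $2\omega_n^2\sqrt{2L_0}$, which is precisely the error term in the statement. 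Without this decomposition you cannot reach the claimed $\omega_n^2\sqrt{L_0}$ window for large $L_0$.
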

\begin{proof}

We now make formal the intuition that follow Equation
\eqref{eq:bound_distance1}. We start with the upper bound. For any integer
$k\ge 0$, we have
\begin{align*}
    \p\pth{ \kappa(\omega_n)\ge k }
    & = \p\pth{L_k \ge \omega_n}  \\
    & \le \p\pth{L_k \ge \omega_n\,| \, G_k} + \p\pth{G_k^c},
\end{align*}
and since the second term is bounded in \eqref{eq:bound_good}, it now suffices
to bound the first one. However, given $G_k$ and $L_k\ge \omega_n$, the random
variables $(R_i,\alpha_i)$, $i=1,\dots, k$ are independent and identically
distributed. The only effect of this conditioning is that $R_i$ is distributed
as $\cR$ conditioned on $\cR<\omega_n/\xi$.
    
Write $X_i=R_i(1+\cos 2\alpha_i) - 2 R_i^2/\omega_n$, and note that $X_i\ge 0$
if $R_i\le \omega/\xi$. Then, from \eqref{eq:bound_distance1}, we have
\begin{align*}
    \p\pth{L_k\ge \omega_n \, | \, G_k}
    & \le \p\pth{\sum_{i=0}^{k-1} X_i \le L_0 - \omega_n \, \Bigg|\, G_{k}, 
    { L_k\ge \omega_n}}.
\end{align*}
Conditional on $ G_{k}^\star=G_k \cap \{L_k\ge \omega_n\}$, the random
variables $X_i$ are independent, $0\le X_i\le 2R_i\le \omega_n$. Furthermore,
since $X_i$ has Gaussian tails, its variance (conditional on $G_k$) is bounded
by a constant independent of $i$ and $n$. Choosing $ k_0=\lceil
(L_0+t)/\e[X_0\,|\, G_0^\star]\rceil$, for some $t<L_0$ to be chosen later, and
using the Bernstein-type inequality in Theorem~2.7 of \cite[][p.
203]{McDiarmid1998}, we obtain
\begin{align*}
    \p(L_{k_0}\ge \omega_n \,|\, G_{k_0}) 
    & \le \p\left(\sum_{i=0}^{k_0-1} (X_i - \e[X_i\,|\, G_{k_0}]) \le -t \, 
    \Bigg|\, G_{k_0}^\star\right) \\
    & \le \expo{- \frac{t^2}{2 k_0 \var(X_0\,|\, G_0^\star) + 2 \omega_n 
    t/3}}.
\end{align*}
In particular, for $t=\omega_n^3 \sqrt{L_0} $, we have for all
$n$ large enough $\p\pth{L_{k_0} \ge \omega_n\,|\, G_{k_0}} \le 
\expo{-\omega_n^{2}}$, since $L_0 \ge \omega_n$.
        
A matching lower bound on $\kappa(\omega_n)$ may be obtained similarly, 
using the lower bound on $L_{i+1}$ in Equation~(\ref{eq:bound_Li})
and following the approach we used to devise the
upper bound with $X_i'=R_i(1+\cos 2\alpha_i)$ (we omit
the details). 
It follows that, for $ k_1=\lfloor (L_0+t) / \e[X_0'\,|\, G_0^\star] \rfloor$, 
we have 
$\p(L_{k_1} \le \omega_n\,|\, G_{k_1}) \le \expo{-\omega_n^{2}}$.
    
To complete the proof,  it suffices to estimate the difference between 
$k_0$ and $k_1$. We have
\begin{align*}
    \e[X_0\,|\, G_{0}^\star]
    &= \e[R_0 (1+\cos 2\alpha_0)\, | \, G_0^\star] -
    \frac{2 \e[R_{0}^2\,|\, G_0^\star]}{\omega_n}\\
    &= \e[\cR (1+\cos 2 \alpha)] + O(1/\omega_n),
\end{align*}
and similarly, $\e[X_0'\,|\, G_0^\star]=\e[\cR(1+\cos 2\alpha)]$. It follows
that $|k_1-k_0|=O(L_0/\omega_n)$, which is not strong enough to prove the
claim. So we need to strengthen the upper bound on the second sum in the
right-hand side of
\eqref{eq:bound_distance1}. We quickly sketch how to obtain the required 
estimate. The idea is to use a dyadic argument to decompose $\kappa(\omega_n)$ 
into the number of steps to reach $L_0/2^j$, for $j\ge 1$, until one gets to 
$\omega_n$ for $j=j_0:=\lceil \log_2(L_0/\omega_n)\rceil$. For the steps $i$ 
which are taken from $Z_i$ with $L_i/L_0 \in (2^{-j}, 2^{-j+1}]$, we use 
the improved bound
$$L_{i+1}\le L_i - R_i (1+\cos 2 \alpha) + \frac 2 {L_0 2^{-j}} R_i^2.$$
Then write
$$
    \kappa(\omega_n)=\sum_{j=1}^{j_0} 
    \big[ \kappa(L_0/2^{j})-\kappa(L_0/ 2^{j-1}) \big],
$$
and observe that the $j$-th summand is stochastically dominated by 
$\kappa(L_0'/2)$ where $L_0'=L_0/2^{j-1}$. For each $j$, we define 
$k_0(j)=\lceil(L_0/2^j+t_j)/\e[X_0\,|\, G_0^\star]\rceil$ where $t_j:=\omega_n^2 
\sqrt{L_0/2^j}$ and note that
\begin{align*}
    \sum_{j=1}^{j_0} k_0(j)
    & \le \frac 1 {\e[X_0\,|\, G_0^\star]} \sum_{j=1}^{j_0} (L_0/2^j + t_j) + 
    \lceil \log_2 (L_0/\omega_n)\rceil\\
    & \le \frac {L_0} {\e[X_0\,|\, G_0^\star]} + 2 \omega_n^2 \sqrt{2 L_0}  + 
    \omega_n,
\end{align*}
for $\omega_n\ge \log n$, since $\pi L_0^2\le n$. In other words, if 
$\kappa(L_0/ 2^j)-\kappa(L_0/2^{j-1})\le k_0(j)$ for every $j$, then 
$\kappa(\omega)\le L_0/\e[X_0\,|\, G_0^\star] + 2 \omega_n^2 \sqrt{2L_0} + 
\omega_n$. 
The claim follows easily by using the union bound, where in each stretch
$[L_0/2^j, L_0/2^{j-1})$ we bound the number of steps using the previous
arguments.
\end{proof}

\begin{corollary}\label{cor:total_number_steps}
Let $z\in \cD_n$, and let $\kappa$ denote the number of steps of the walk 
to reach the objective $q$ in $\ppp_z \cup \{q\}$
when starting from 
the site $z\in \ppp_z$ at distance $L_0=\|zq\|$. Then 
$$\p\pth{\kappa > \frac{L_0}{\e[\cR (1+\cos 2\alpha)]}
     + 2 \omega_n^2 \sqrt{2 L_0} + \omega_n^3 } \le 5
  \expo{-\omega_n^{3/2}}.$$ 
\end{corollary}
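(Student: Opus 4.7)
The plan is to decompose $\kappa = \kappa(\omega_n) + (\kappa - \kappa(\omega_n))$ and handle each piece separately. Proposition~\ref{pro:number_steps} already controls $\kappa(\omega_n)$: with probability at least $1 - 4\expo{-\omega_n^{3/2}}$ we have
\[
\kappa(\omega_n) \le \frac{L_0}{\e[\cR(1+\cos 2\alpha)]} + 2\omega_n^2\sqrt{2L_0} + \omega_n.
\]
So the only new work is to bound the \emph{endgame}: the number of stoppers visited after the walk first enters the ball $b(q,\omega_n)$.

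The key geometric observation for the endgame is that the sequence $(L_i)$ is strictly decreasing (equivalently, stoppers never move further from $q$), so every stopper $Z_j$ with $j \ge \kappa(\omega_n)$ lies inside $b(q,\omega_n)$. Since stoppers are pairwise distinct points of $\ppp_z$, the endgame consists of at most $N := |\ppp_z \cap b(q,\omega_n)|$ steps. The variable $N$ is at most $1 + N'$ where $N' = |\ppp \cap b(q,\omega_n)|$ is Poisson with mean $\pi \omega_n^2$ (the $+1$ accounts for a possibly forced point at $z$).

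A standard Chernoff upper-tail estimate for Poisson random variables then gives
\[
\p\big(N \ge \omega_n^3 - \omega_n\big) \le \p\big(\Po(\pi\omega_n^2) \ge \omega_n^3/2\big) \le \expo{-\omega_n^{3/2}}
\]
for all $n$ large enough, since $\omega_n \ge \log n \to \infty$ and $\omega_n^3$ dominates the Poisson mean $\pi\omega_n^2$ by a polynomial factor in $\omega_n$. Combining with the bound from Proposition~\ref{pro:number_steps} via a union bound yields the claimed probability of $5\expo{-\omega_n^{3/2}}$ and the stated bound on $\kappa$. The degenerate case $L_0 \le \omega_n$ where the proposition does not apply is handled by the endgame argument alone, since there the whole walk lives in $b(q,\omega_n)$ and the RHS already contains the $\omega_n^3$ term. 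There is no real obstacle here: the corollary is a short post-processing of Proposition~\ref{pro:number_steps}, with the only point requiring care being the choice of Poisson concentration strong enough to match the $\expo{-\omega_n^{3/2}}$ scale used throughout the section.
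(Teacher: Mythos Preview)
Your proposal is correct and follows essentially the same approach as the paper: decompose $\kappa=\kappa(\omega_n)+(\kappa-\kappa(\omega_n))$, invoke Proposition~\ref{pro:number_steps} for the first piece, and bound the endgame by the number of sites of $\ppp$ in $b(q,\omega_n)$ via a Poisson tail bound. The only cosmetic difference is that the paper uses the threshold $2\pi\omega_n^2$ for the Poisson tail (yielding $\expo{-\pi\omega_n^2/3}$), whereas you use $\omega_n^3-\omega_n$; both fit comfortably inside the $\omega_n^3$ budget and the $\expo{-\omega_n^{3/2}}$ scale.
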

\begin{proof}
It suffices to bound the number of steps $i$ such that $L_i<\omega_n$. Since
$L_i$ is decreasing, the walk only stops at most once at any given site, and the
number of steps $i$ with $L_i\le \omega_n$ is at most the number of sites lying
within distance $\omega_n$ of $q$. Recalling that $\Po(x)$ denotes a Poisson
random variable with mean $x$. We have \cite{DuPa2009}
\begin{align*}
    \p\pth{\#\{i<\kappa: L_i\le \omega_n\} \ge 2 \pi \omega_n^2}
    & \le \p\pth{\text{Po}(\pi \omega_n^2) \ge 2 \pi \omega_n^2}\notag\\
    & \le \expo{-\pi \omega_n^2/3}.
\end{align*}
The claim then follows easily from the upper bound in
Proposition~\ref{pro:number_steps}.
\end{proof}
\subsubsection{The number of vertices in the discs}
\label{sec:visited_points}
\noindent
We now bound the total number of vertices visited, which we recall is
exactly the the number of points in $\ppp_n$ falling within union of all of the
discs in the walk.
Proposition~\ref{pro:number_steps} will be the key to analysing the path
constructed by the walk: representations based on sums of random variables
similar to the one in \eqref{eq:bound_Li} may be obtained to upper bound the
number of steps and intermediate steps visited by the walk (which is an upper
bound on the vertices visited by the path), and also the sum of the length of
the edges.
\begin{proposition}
\label{pro:bound_substeps}Let $K=K(z)$ be the
number of vertices visited by the walk starting from a given
site $z$ with $L_0=\|zq\|$. Then, for all $n$ large enough, 
$$\p\left(K\ge \frac{L_0}{\e[\cR(1+\cos 2\alpha)]} \cdot \frac
{\pi-A}A + \sqrt{L_0} \omega_n^4 + \omega_n^3 \right)\le
7 \expo{-\omega_n^{3/2}}.$$
\end{proposition}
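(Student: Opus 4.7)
The strategy mirrors that of Proposition~\ref{pro:number_steps}: bound $K$ by a sum of independent per-step counts conditional on a good event that enforces disjointness of the search regions, apply a Bernstein-type concentration inequality, and then combine with the already-established control of $\kappa$ from Corollary~\ref{cor:total_number_steps}. Write
$$
K \le \sum_{i=0}^{\kappa-1} \tau_i + O(\kappa),
$$
where $\tau_i = |\disc(Z_i,q,R_i) \cap \ppp \setminus \{Z_i,Z_{i+1}\}|$; the $O(\kappa)$ correction accounts for anchors and stoppers, which by Lemmas~\ref{lem:independent-cones} and~\ref{lem:independent-discs} are the only sites that can appear in more than one search disc, and this correction is absorbed in the lower-order term since $\kappa = O(L_0/\e[\cR(1+\cos 2\alpha)]) + o(\sqrt{L_0}\,\omega_n^4)$.

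Next I would analyse one step conditionally. On $G_i^\star$, the cone $\cone(Z_i,q,R_i)$ contains no point of $\ppp$ in its interior by definition of $R_i$, and the annular region $\disc(Z_i,q,R_i)\setminus\cone(Z_i,q,R_i)$ is fresh Poisson territory (it does not intersect any previous cone nor any previous non-cone annulus), so that $\tau_i \mid (R_i,G_i^\star)$ is Poisson with mean $(\pi - A)\,R_i^2$. Marginalising over $R_i$ and using $\e[\cR^2] = 1/A$ together with the negligible effect of truncating $\cR$ at $\omega_n/\xi$, one obtains
$$
\e[\tau_i \mid G_i^\star] = \frac{\pi - A}{A} + O\!\left(\expo{-\omega_n^{3/2}}\right).
$$
Moreover, on $G_k^\star$ the tuples $(R_i,\tau_i)$ for $0 \le i < k$ are mutually independent by the same disjointness argument as in Section~\ref{sec:size_of_discs}.

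Then I would apply concentration to $\sum_{i<k} \tau_i$. Because the $\tau_i$ are Poisson (unbounded), a direct Bernstein bound fails, so one first truncates at $\omega_n^{1+\xi}$ using Lemma~\ref{lem:tau_i_is_small}, which contributes an error of at most $\expo{-\omega_n^{1+\xi/3}}$ to the probability. On the truncated variables the per-term range is $\omega_n^{1+\xi}$ and the conditional variance is uniformly bounded (since Poisson$(\,(\pi{-}A)\cR^2\,)$ has polynomial moments), so Theorem~2.7 of \cite{McDiarmid1998} gives, for $k = k_0 := \lceil L_0/\e[\cR(1+\cos 2\alpha)] \rceil + 2\omega_n^2\sqrt{2L_0}+\omega_n$,
$$
\prob\!\left(\sum_{i=0}^{k_0-1}\tau_i \ge k_0 \cdot \tfrac{\pi-A}{A} + \tfrac{1}{2}\sqrt{L_0}\,\omega_n^4 \;\Big|\; G_{k_0}^\star\right) \le \expo{-\omega_n^{3/2}}.
$$
Finally, I would combine this with the upper-tail bound $\kappa \le k_0$ with probability $1 - 5\expo{-\omega_n^{3/2}}$ from Corollary~\ref{cor:total_number_steps}, the bound $\prob(G_{k_0}^c) \le \expo{-\omega_n^{3/2}}$ from \eqref{eq:bound_good}, the truncation error from Lemma~\ref{lem:tau_i_is_small}, and the $O(\kappa)$ correction for duplicated anchors/stoppers, using a union bound to obtain the claimed tail estimate.

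The main obstacle is that, in contrast with Proposition~\ref{pro:number_steps}, the summands here are not uniformly bounded and their variance depends on the random radii $R_i$, so the concentration argument requires two nested truncations (one on $R_i$ via $G_i^\star$, one on $\tau_i$ via Lemma~\ref{lem:tau_i_is_small}) before a Bernstein-type bound can be applied; if a single application yields only an $O(\omega_n^2 L_0^{1/2})$ deviation that is too weak for the square-root scaling of the stated error term, a dyadic decomposition of the walk into the ranges $L_i/L_0 \in (2^{-j},2^{-j+1}]$ exactly as in Proposition~\ref{pro:number_steps} will tighten the error to the required $\sqrt{L_0}\,\omega_n^4 + \omega_n^3$.
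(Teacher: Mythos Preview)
Your overall strategy is correct and matches the paper's: condition on a good event to get independence of the per-step counts, control the number of steps via Proposition~\ref{pro:number_steps}/Corollary~\ref{cor:total_number_steps}, then apply concentration to $\sum_i \tau_i$.

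Two differences are worth noting. First, the paper explicitly splits $K=K_1+K_2$ where $K_2$ collects all visited sites within distance $\omega_n$ of $q$; this is bounded crudely by $\p(\Po(\pi\omega_n^2)\ge 2\pi\omega_n^2)$, and $K_1$ is then the sum of the $\tau_i$ for $i<\kappa(\omega_n)$. You instead write $K\le \sum_{i<\kappa}\tau_i + O(\kappa)$ and invoke independence on $G_i^\star$ throughout, but $G_i^\star$ requires $L_i\ge\omega_n$, so the last few steps are not covered by your conditioning. This is easy to patch exactly as the paper does, but you should say so.

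Second, and more substantively, the concentration step is handled differently. You truncate each $\tau_i$ at $\omega_n^{1+\xi}$ via Lemma~\ref{lem:tau_i_is_small} and then apply Bernstein to the truncated summands. The paper instead exploits the Poisson structure directly: conditionally on the radii, $\sum_i\tilde\tau_i$ is a single Poisson variable with parameter $(\pi-A)\sum_i\widetilde\cR_i^2$, and one splits
\[
\p\!\Big(\Po\big((\pi{-}A)\textstyle\sum_i\widetilde\cR_i^2\big)\ge k\gamma+t\Big)
\le \p\!\big(\Po(k\gamma+t/2)\ge k\gamma+t\big)
+\p\!\Big(\textstyle\sum_i\widetilde\cR_i^2\ge (k\gamma+t/2)/(\pi-A)\Big),
\]
bounding the first term by a Poisson tail and the second by Bernstein applied to the $\widetilde\cR_i^2$, which are genuinely bounded by $\omega_n^2/\xi^2$ on $G_k$. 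This avoids the detour through Lemma~\ref{lem:tau_i_is_small} and keeps the argument self-contained. Your route works too, provided you choose the truncation exponent large enough that the error from Lemma~\ref{lem:tau_i_is_small} does not dominate $\expo{-\omega_n^{3/2}}$. Finally, the dyadic decomposition you mention as a fallback is not needed here; a single application of concentration with $t=\sqrt{L_0}\,\omega_n^4$ already suffices.
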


\begin{proof}
There are two contributions to $K-\kappa(\omega_n)$: first the number of
intermediate steps which lie at distance greater than $\omega_n$ from $q$, and
all the sites which are visited and lie within distance $\omega_n$ from $q$.
Let $K=K_1+K_2$ where $K_1$ and $K_2$ denote these two contributions,
respectively.
By the proof of Corollary~\ref{cor:total_number_steps}, we have 
\begin{align}\label{eq:bound_firststeps}
\p\pth{K_2 \ge 2 \pi \omega_n^2}
& \le \expo{-\pi \omega_n^2/3}.
\end{align}
To bound $K_1$, observe that the monotonicity of $L_i$ implies that $K_1$
counts precisely the number of intermediate steps before reaching the disc of
radius $\omega_n$ about $q$. Observe that if $L_0<\omega_n$, $K_1=0$, so we may
assume that $L_0\ge \omega_n$. Recall that $\tau_i$ denotes the number of
\emph{intermediate} points at the $i$-th step. Note that the intermediate
points counted by $\tau_i$ all lie in $\disc(Z_i,q,R_i)\setminus
\cone(Z_i,q,R_i)$, and given the radius $R_i$, $\tau_i$ is stochastically
bounded by a Poisson random variable with mean $(\pi-A)R_i^2$. Furthermore, on
the event $G_{\kappa(\omega_n)}$, the random variables $R_i, i=0,\dots,
\kappa(\omega_n)$ are independent. Also, by Lemma~\ref{lem:independent-discs}
the regions $\disc(Z_i,q,R_i)\setminus \cone(Z_i,q,R_i)$, $i\ge 0$, are
disjoint so that the random variables $\tau_i$, $i=0,\dots, \kappa$ are
independent given $R_i$, $i=0,\dots, \kappa$.

Let $\widetilde \cR_i$, $i\ge 0,$ be a sequence of i.i.d.\ random variables
distributed like $\cR$ conditioned on $\cR\le \omega_n/\xi$ and given this
sequence, let $\tilde \tau_i$, $i\ge 0$, be independent distributed like
$\Po((\pi-A)\widetilde \cR_i)$. As a consequence of the previous arguments, for
$k=k_0+ 2 \omega_n^2 \sqrt{2 L_0} + \omega_n$ 
with $k_0=\lceil L_0 / \e[\cR(1+\cos 2 \alpha)]\rceil$, we have
\begin{align}\label{eq:bound_K1}
    \p\pth{K_1 \ge \ell }
    &\le \p\Bigg(\sum_{i=0}^{(k-1)\wedge 
    \kappa(\omega_n)} \tau_i \ge \ell \Bigg) + 
    \p\pth{\kappa(\omega_n) \ge k_0+2 \omega_n^2 \sqrt{2 L_0} + 
    \omega_n}\notag\\
    &\le \p\left(\sum_{i=0}^{k-1} \tilde \tau_i \ge \ell \right) + \p(G_k^c) +
    \p\pth{\kappa(\omega_n) \ge k_0+2 \omega_n^2 \sqrt{2 L_0} + 
    \omega_n} \notag \\
    &\le \p\left(\sum_{i=0}^{k-1} \tilde \tau_i \ge \ell \right) + 
    5\expo{-\omega_n^{3/2}},
\end{align}
by \eqref{eq:bound_good} and Proposition~\ref{pro:number_steps}. 
		
We now bound the first term in \eqref{eq:bound_K1}. Note that $i\ge 0$, we have
$$\e[\tilde \tau_i] = (\pi-A) \e[\widetilde \cR_i^2] \le
(\pi-A)\e[\cR^2]=\frac{\pi-A}A=:\gamma,$$ and we expect that $\sum_{i=0}^{k-1}
\tilde \tau_i$ should not exceed its expected value, $k\gamma$ by much. Write
$\ell=k\gamma + t$, for some $t$ to be chosen later. For the sum to be
exceptionally large either the radii of the search discs are large, or the
discs are not too large but the number of points are:
\begin{align}\label{eq:bound_sum_tau}
    \p\left(\sum_{i=0}^{k-1} \tilde \tau_i\ge k \gamma + t \right)
    &= \p\left(\Po\bigg((\pi-A)\sum_{i=0}^{k-1} \widetilde \cR_i^2 \bigg) 
    \ge k \gamma + t \right)\notag\\
    &\le \p\left(\Po\Big(k \gamma + \frac t 2\Big) \ge k \gamma +t \right) + 
    \p\left(\sum_{i=0}^{k-1} \widetilde \cR_i^2 
    \ge \frac{k \gamma + t/2}{\pi-A} \right). 
\end{align}
The first term simply involves tail bounds for Poisson random variables. For
$t=\sqrt{L_0}\omega_n^4$, we have
\begin{align*}
    \p\left(\Po\Big(k\gamma + \frac t 2\Big) \ge k \gamma +t \right) 
    &\le \expo{-\frac{(t/2)^2}{3( k \gamma + t/2)}}\\
    &\le \expo{-\omega_n^{3}},
\end{align*}
for $n$ large (recall that we can assume here that $L_0\ge \omega_n$.) The
second term in \eqref{eq:bound_sum_tau} is bounded using the same technique as
in the proof of Proposition~\ref{pro:number_steps} above. Since we have $0\le
\widetilde \cR_i^2\le \omega_n^2$ and $\e[\widetilde \cR_i^2]\le 1/A$, we
obtain for some positive constant $c$, 
\begin{align*} 
    \p\left(\sum_{i=0}^{k-1}
    \widetilde \cR_i^2 \ge \frac{k \gamma + t/2}{\pi-A} \right) 
    &\le \p(G_k) +
    \expo{-8 \frac{k}{A^2 \omega_n^4}}
    \le \expo{- \frac{c t^2}{k
    \var(\cR)+ \omega_n^2 t}}. 
\end{align*} Recalling that $L_0\ge \omega_n$,
yields 
$$
    \p\left(\sum_{i=0}^{k-1} \tilde \tau_i\ge k\gamma + \omega_n^4
    \sqrt{L_0}\right)\le \expo{-\omega_n^2},
$$ 
for all $n$ large enough, which
together with \eqref{eq:bound_K1} proves that $\p\pth{K_1\ge k \gamma +
\omega_n^4 \sqrt{L_0}} \le 6 \expo{-\omega_n^{3/2}}$. Using
\eqref{eq:bound_firststeps} readily yields the claim.
\end{proof}

\subsubsection{The length of $\proc{Simple-Path}$}
When using $\proc{Competitive-Path}$, the path length is deterministically
bounded by the length of the walk. However, for $\proc{Simple-Path}$, the path
length is dependent on the configuration of the points inside the discs. We will
show that with strong probability and as long as the walk is sufficiently long,
the path length given by $\proc {Competitive-Path}$ is no better in an
asymptotic sense than that given by $\proc{Simple-Path}$.
\begin{proposition}
\label{pro:walk_length}For $z\in \cD$, let $\Lambda=\Lambda(z)$ be the sum of
the lengths of the edges of $\DT(\ppp_z)$ used by $\proc{Simple-Path}$ given a
walk with objective $q$
and starting from $z$ such that $L_0=\|zq\|$. Then,
\begin{equation}\label{eq:bound_length}
    \p\Big(\Lambda \ge c L_0 + (3\sqrt{L_0}+1) \omega_n^4 \Big) \le 
    8\expo{-\omega_n^{3/2}}\qquad \text{where}\qquad c:=\frac{22 \pi - 4 \sqrt 
    2}{2+3\pi+8\sqrt 2}.
\end{equation}
\end{proposition}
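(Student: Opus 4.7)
The plan is to follow the blueprint of Propositions~\ref{pro:number_steps} and~\ref{pro:bound_substeps}. I would decompose $\Lambda = \sum_{i=0}^{\kappa-1} \Lambda_i$ where $\Lambda_i$ is the length of the sub-path returned by $\proc{Simple-Path}$ from $Z_i$ to $Z_{i+1}$. Since $\proc{Simple-Path}$ produces this sub-path by following the predecessor tree built during step $i$, every edge used is a Delaunay edge between vertices visited at that step, so $\Lambda_i$ is a measurable functional of the restriction of $\ppp_z$ to $\disc(Z_i,q,R_i)$ together with its $1$-hop Delaunay neighbourhood. A simple deterministic upper bound is $\Lambda_i \le 2R_i(\tau_i+2)$, since the tree-path has at most $\tau_i+2$ edges and each edge has length at most the disc diameter $2R_i$.

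Next I would set up the independence exactly as in the previous propositions: on $G_i^\star$ the search cones are disjoint (Lemma~\ref{lem:independent-cones}) and the crescents $\disc(Z_i,q,R_i)\setminus\cone(Z_i,q,R_i)$ are also disjoint (Lemma~\ref{lem:independent-discs}), so the random variables $\Lambda_i$ are mutually independent given $G_i^\star$, and each has the same law as a functional of a Poisson process on a fresh cone+crescent. I would then compute $\e[\Lambda_0\mid G_0^\star]$ in closed form by integrating against the joint density of $R_0$, $\alpha_0$, and of the intermediate vertices, together with the conditional law of the predecessor tree; the constant $c$ in the proposition arises as the ratio $\e[\Lambda_0\mid G_0^\star]/\e[\cR(1+\cos 2\alpha)]$, where the factor $2+3\pi+8\sqrt 2$ in the denominator matches $2(\pi+2\sqrt 2)\,\e[1+\cos 2\alpha]$ as computed from~\eqref{eq:dist_alpha}.

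Given the bound $\Lambda_i\le 2\omega_n(\tau_i+2)$ valid on $G_i^\star$ and the tail bound on $\tau_\text{max}$ from Lemma~\ref{lem:tau_i_is_small}, the variables $\Lambda_i$ are uniformly bounded on a very high probability event and have bounded conditional variance. I would then apply the Bernstein-type inequality of Theorem~2.7 of~\cite{McDiarmid1998} exactly as in Proposition~\ref{pro:number_steps} to obtain
\[
\p\Bigl(\,\sum_{i=0}^{k-1} \Lambda_i \ge k\,\e[\Lambda_0\mid G_0^\star] + t \,\Bigm|\, G_k\Bigr) \le \expo{-\omega_n^{3/2}}
\]
for $k = \lceil L_0/\e[\cR(1+\cos 2\alpha)]\rceil + 2\omega_n^2\sqrt{2L_0}+\omega_n$ and $t = 3\sqrt{L_0}\,\omega_n^4$. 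Combining this with Proposition~\ref{pro:number_steps} to guarantee $\kappa(\omega_n)\le k$ with probability at least $1-4\expo{-\omega_n^{3/2}}$, with the bound \eqref{eq:bound_good} on $G_k^c$, and treating the contribution of the remaining sites within distance $\omega_n$ of $q$ exactly as in Proposition~\ref{pro:bound_substeps} (their number is at most $2\pi\omega_n^2$ with overwhelming probability, each contributing a path edge of length $O(\omega_n)$ for a total of $O(\omega_n^3)$) yields the stated bound.

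The main obstacle is the precise evaluation of $\e[\Lambda_0\mid G_0^\star]$ yielding the explicit constant $c=(22\pi-4\sqrt 2)/(2+3\pi+8\sqrt 2)$. Because $\proc{Simple-Path}$ inherits its structure from the order in which intermediate vertices are popped by the algorithm (by increasing associated disc radius), the tree-path is not simply the Delaunay geodesic from $Z_i$ to $Z_{i+1}$ and its expected length is a nontrivial functional of the Poisson configuration in the search disc. Controlling it requires either a direct Slivnyak--Mecke-style computation along the path, or a deterministic surrogate of $\Lambda_i$ whose expectation can be integrated exactly yet remains tight enough to produce the stated $c$.
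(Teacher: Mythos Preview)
Your overall strategy matches the paper's exactly: decompose $\Lambda=\sum_i\lambda_i$, use the crude deterministic bound $\lambda_i\le 2R_i(1+\tau_i)$, exploit the independence on $G_k^\star$ granted by Lemmas~\ref{lem:independent-cones} and~\ref{lem:independent-discs}, truncate at $\kappa(\omega_n)$ via Proposition~\ref{pro:number_steps}, handle the last $O(\omega_n^2)$ sites inside $b(q,\omega_n)$ separately, and apply the Bernstein-type inequality of McDiarmid.

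The ``main obstacle'' you flag, however, is a phantom. The constant $c$ in the statement is \emph{not} the expected length of the actual \textsc{Simple-Path} sub-path; it is simply the expectation of the crude surrogate you already wrote down. In the paper one sets $\tilde\lambda_i:=2\widetilde\cR_i(1+\tilde\tau_i)$ with $\tilde\tau_i\mid\widetilde\cR_i\sim\Po((\pi-A)\widetilde\cR_i^2)$, and then
\[
c=\frac{\e[\tilde\lambda_0]}{\e[\cR(1+\cos 2\alpha)]}
=\frac{2\,\e[\cR]+2(\pi-A)\,\e[\cR^3]}{\e[\cR(1+\cos 2\alpha)]}
=\frac{22\pi-4\sqrt 2}{2+3\pi+8\sqrt 2},
\]
which is an elementary integration against the density $2Ax\,e^{-Ax^2}$ of $\cR$ and the density of $\alpha$ in~\eqref{eq:dist_alpha}. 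No analysis of the predecessor tree, no Slivnyak--Mecke computation, and no ``conditional law of the tree-path'' is needed: the proposition is only a tail \emph{upper} bound, so the crude majorant suffices to produce the stated $c$. One minor technical difference: rather than invoking Lemma~\ref{lem:tau_i_is_small} for a uniform bound on $\tau_i$, the paper shows directly that $\p(\tilde\lambda_i\ge x^2)\le 2e^{-\eta x^2}$, which simultaneously gives $\var(\tilde\lambda_0)<\infty$ and the event $\{\exists i<k:\tilde\lambda_i\ge\omega_n^2\}$ small probability; your route via $\tau_{\max}$ would also work.
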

\begin{proof}
Write $\lambda_i$ for the sum of the lengths of the edges used by the walk to
go from $Z_i$ to $Z_{i+1}$. So $\Lambda=\sum_{i=0}^{\kappa-1} \lambda_i$. Our
bound here is very crude: all the intermediate points remain in
$\disc(Z_i,q,R_i)$, and given $R_i$, we have $\lambda_i\le (1+\tau_i) \cdot 2
R_i$. Again, on $G_k$ the cones do not intersect provided that $L_k\ge
\omega_n$, and by Lemma~\ref{lem:independent-discs} the random variables
$\lambda_i$, $0\le i<k$ are independent. We use once again the method of
bounded variances (Theorem~2.7 of \cite{McDiarmid1998}).

We decompose the sum into the contribution of the steps before 
$\kappa(\omega_n)$ and the ones after:
\begin{align}\label{eq:Lambda_decomp}
    \p\pth{\Lambda \ge x+t}
    & \le \p\Bigg(\sum_{i=0}^{(k-1) \wedge \kappa(\omega_n)} 
    \lambda_i \ge x \Bigg)+ 
    \p\pth{\kappa(\omega_n)\ge k} + 
    \p\Bigg(\sum_{i=\kappa(\omega_n)}^{\kappa-1} \lambda_i \ge t\Bigg).
\end{align}
For $i\ge \kappa(\omega_n)$, $\disc(Z_i,q,R_i)$ is contained in $
b(q,\omega_n)$, { the disc of radius $\omega_n$ around $q$}, and the
contribution of the steps $i\ge \kappa(\omega_n)$ is at most $2\omega_n
|{\mathbf \Phi} \cap b(q,\omega_n)|$. In particular
\begin{align*}
    \p\Bigg(\sum_{i=\kappa(\omega_n)}^{\kappa-1} \lambda_i \ge t \Bigg)
    & \le \p\left(2\omega_n\Po(\pi\omega_n^2) \ge t\right)\\
    & \le \expo{-\omega_n^{3/2}},
\end{align*}
for all $n$ large enough provided that $t\ge 4 \pi \omega_n^3$. To make sure
that the second contribution in \eqref{eq:Lambda_decomp} is also small, we rely
on Proposition~\ref{pro:number_steps} and choose $k=\lceil L_0/\e[\cR(1+\cos
2\alpha)] + 2\omega_n^2 \sqrt{2L_0}+\omega_n\rceil$ so that
$\p(\kappa(\omega_n)\ge k)\le 4 \expo{-\omega_n^{3/2}}$.
		
Finally, to deal with the first term in \eqref{eq:Lambda_decomp}, we note that
on $G_k$, the random variables $\lambda_i$, $i=0,\dots, k+1$ are independent
given $R_i$, $i=0,\dots, k+1$. Let $\widetilde \cR_i$, $i=0,\dots, k-1$ be
i.i.d.\ copies of $\cR$ conditioned on $ \cR\le \omega_n/\xi$; then let $\tilde
\tau_i$ be independent given $R_i$, $i=0,\dots, k+1$, and such that $\tilde
\tau_i=\Po((\pi-A) \widetilde \cR_i)$; finally, let $\tilde \lambda_i = 2 \cR_i
(1+\tilde \tau_i)$. We choose $x=k \e[\tilde \lambda_0] + y$ with
$y=\sqrt{L_0}\omega_n^4$. Using arguments similar to the ones we have used in
the proofs of Propositions~\ref{pro:number_steps} and~\ref{pro:bound_substeps},
we obtain

\begin{align}\label{eq:length_outside}
    \p\Bigg(\sum_{i=0}^{(k-1) \wedge \kappa(\omega_n)} \lambda_i \ge x\Bigg)
    & \le \p\left( \sum_{i=0}^{k-1} \tilde \lambda_i \ge x \right) + \p(G_k^c)\\
    & \le \expo{- \frac{y^2}{2k \var(\tilde \lambda_0)+ 2 \omega_n^2 
    y/3}} + \p\pth{\exists i<k: \tilde \lambda_i \ge \omega_n^2} + 
    \p\pth{G_{k}^c}.\notag
\end{align}
To bound the second term in the right-hand side above, observe that for $i<k$,
we have, for any $x>0$,
\begin{align*}
    \p\big(\tilde \lambda_i \ge x^2 \big)
    & \le \p\big((1+\tilde \tau_i)2 \widetilde \cR_i \ge x^2\big)\\
    & \le \p\big((1+\tilde \tau_i)2 \cR_i \ge x^2 \,\big|\, 
        2\widetilde\cR_i\le x\big) + \p\big(2 \widetilde \cR_i \ge x\big) \\
    & \le \p\big(1+\tilde \tau_i \ge x \,\big|\, 2\widetilde \cR_i \le x\big) + 
        \p\big(2 \widetilde \cR_i \ge x\big) \\
    & \le \p\pth{\Po((\pi-A)x^2/4) \ge x-1} + \expo{-A x^2/4}\\
    & \le 2\expo{-\eta x^2},
\end{align*}
for some constant $\eta>0$ and all $x$ large enough. It follows immediately
that $\var(\tilde \lambda_0)<\infty$ and that, $n$ large enough,
\begin{align}\label{eq:bound_lambdai}
    \p\big(\exists i<k: \tilde \lambda_i \ge \omega_n^2\big)
    & \le k \expo{-\eta \omega_n^2}\notag\\
    & \le \expo{-\omega_n^{3/2}}.
\end{align}
Going back to \eqref{eq:length_outside}, we obtain
\begin{align*}
    \p\Bigg(\sum_{i=0}^{(k-1) \wedge \kappa(\omega_n)} \lambda_i \ge x\Bigg)  
    & \le 3 \expo{-\omega_n^{3/2}}, 
\end{align*}
since here, we can assume that $L_0\ge \omega_n$ (if this is not the case, the
points outside of the disc of radius $\omega_n$ centred at $q$ do not
contribute). Putting the bounds together yields 
$$\p\pth{\Lambda \ge x+t} \le 8 \expo{-\omega_n^{3/2}},$$
and the claim follows by observing that for 
$$
    c:=\frac{\e[2(1+\Po((\pi-A)\cR^2))\cR]}{\e[\cR(1+\cos 2 \alpha)]}
    = \frac{2 \e[\cR] + \e[2 (\pi-A) \cR^3]}{\e[\cR(1+\cos 2 \alpha)]},
$$
it is the case that $cL_0+(3\sqrt{L_0}+1) \omega_n^4\ge x+t$ for all $n$ large
enough. Simple integration using the distributions of $\cR$ and $\alpha$ then
yields the expression in \eqref{eq:bound_length}.
\end{proof}

\subsection{The number of sites accessed}
\label{sec:accessed_points}
In this section, we bound the total number of sites accessed by the cone-walk
algorithm, counted with multiplicity. We note a point is accessed at step $i$ if
it is the endpoint of an edge whose other end lies inside the disc $D_i$. A
given point may be accessed more than once, but via different edges, so we will
bound the number of edges such that for some $i$, one end point lies inside
$D_i$ and the other outside; we call these \emph{crossing edges}. (Note that the
number of crossing edges does not quite bound the complexity of the algorithm
for the complexity of a given step is not linear in the number of accessed
points; however it gives very good information on the amount of data the
algorithm needs to process.)

\begin{proposition}\label{pro:bound_algo}
  Let $A = A(z)$ be the number of sites in $\ppp$ accessed by the cone walk
  algorithm (with multiplicity) when walking towards $q$ from $z$. Then there
  exists a constant $c>0$ such that
  $$
    \p\bigg(A(z) > c \,L_0 + 4\left(\sqrt{L_0}+1\right)\omega_n^6\  \bigg) 
    \leq
    3\expo{- \omega_n^{5/4}}.
  $$
  For $n$ sufficiently large.
\end{proposition}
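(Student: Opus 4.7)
\medskip
\noindent\textit{Proof proposal.} The plan is to decompose $A(z)=\sum_{i=0}^{\kappa-1} A_i$, where $A_i$ denotes the number of \emph{crossing edges} at step $i$, i.e., the Delaunay edges having one endpoint inside $\disc(Z_i,q,R_i)$ and the other outside. Following the pattern established in Propositions~\ref{pro:number_steps}, \ref{pro:bound_substeps} and~\ref{pro:walk_length}, I would split the sum at $\kappa(\omega_n)$ into a \emph{bulk} contribution (from the steps with $L_i>\omega_n$) and an \emph{endgame} contribution (from steps inside $b(q,\omega_n)$), then handle the bulk by concentration and the endgame crudely.

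For the endgame, every crossing edge has both endpoints within distance $2\omega_n$ of $q$. The number of points of $\ppp$ in $b(q,2\omega_n)$ is $\Po(4\pi\omega_n^2)$, which stays below $5\pi\omega_n^2$ except with probability $\expo{-c\omega_n^2}$. Bounding each degree by $\Delta_\ppp$ and invoking the tail estimate $\p(\Delta_\ppp > \omega_n^{1+\xi})\le \expo{-\omega_n^{1+\xi/3}}$ from Appendix~\ref{appendix:max_degree} (Proposition~\ref{prop:max_degree}), the endgame contributes only $O(\omega_n^{3+\xi})$ accessed edges with the required probability, easily absorbed in the $\omega_n^6$ correction term.

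For the bulk I would condition on $G_k$ with $k=\lceil L_0/\e[\cR(1+\cos 2\alpha)] + 2\omega_n^2\sqrt{2L_0} + \omega_n\rceil$, so that Proposition~\ref{pro:number_steps} guarantees $\kappa(\omega_n)\le k$ except on an event of probability $4\expo{-\omega_n^{3/2}}$. On $G_k$, Lemma~\ref{lem:independent-cones} ensures that the search cones do not overlap, so the annular ``fresh'' regions from which the $A_i$ arise are disjoint and the sequence $(A_i)_{i<k}$ can be coupled with independent copies $(\widetilde A_i)_{i<k}$, each depending only on the local Poisson configuration inside and near a fresh disc of radius $R_i$. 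The crucial geometric input to make the scheme work is that for a Poisson process of intensity $1$ and a fixed disc of radius $r$, the number of Delaunay crossing edges has mean $O(r)$ and Gaussian-type tails; this is the perimeter-scale analogue of the area-scale estimate $\e[\tau_i\mid R_i]=(\pi-A)R_i^2$ used in Proposition~\ref{pro:bound_substeps}, and heuristically rests on the empty-circumscribed-disc characterization of Delaunay edges (an edge of length $\ell$ requires an empty disc of radius at least $\ell/2$, whose existence has probability $\expo{-\Theta(\ell^2)}$). Granted this linear bound $\e[\widetilde A_i\mid R_i]\le C R_i$, one has $\sum_{i<k}\e[\widetilde A_i] = cL_0 + O(\omega_n^2\sqrt{L_0})$ for an explicit constant $c$, and Theorem~2.7 of~\cite{McDiarmid1998} applied after truncating the summands at $\omega_n^2$ (a rare event of per-step probability $\expo{-\eta\omega_n^2}$, proved as in~\eqref{eq:bound_lambdai} by combining the Gaussian tail of $\cR$, the Poisson tail of $\tau_i$, and the crossing-edge tail) yields deviation $\sqrt{L_0}\omega_n^4$ with probability at most $\expo{-\omega_n^{3/2}}$.

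The main obstacle is precisely this per-disc edge-crossing estimate: controlling the number of Delaunay edges straddling the boundary of a disc by a linear function of its radius, with Gaussian-like tails. The crude a priori bound $A_i \le (\tau_i+2)\Delta_\ppp \le \omega_n^{2+2\xi}$ obtained by merely combining Lemma~\ref{lem:tau_i_is_small} and the max-degree estimate, summed over $O(L_0)$ steps, gives $O(L_0\omega_n^{2+2\xi})$, which is far too weak. Once the genuine geometric estimate on crossing edges is secured, the concentration machinery of Propositions~\ref{pro:bound_substeps} and~\ref{pro:walk_length} transfers with only cosmetic modifications, and a union bound over $G_k^c$, the truncation event, the deviation of $\kappa(\omega_n)$, and the Bernstein deviation of the truncated sum produces the claimed bound with probability at most $3\expo{-\omega_n^{5/4}}$.
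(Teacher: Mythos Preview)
Your step-by-step decomposition $A(z)=\sum_i A_i$ is a different route from the paper's, and it has a genuine gap that is more serious than the ``main obstacle'' you identify. The problem is the coupling claim: even on $G_k$, the random variables $A_i$ are \emph{not} functions of disjoint regions of the Poisson process. A crossing edge at step $i$ has one endpoint inside $\disc(Z_i,q,R_i)$ and one outside, and whether that edge is Delaunay depends on the configuration of $\ppp$ in a neighbourhood of the edge that may lie well outside $\disc(Z_i,q,R_i)$. For consecutive steps $i$ and $i+1$ these neighbourhoods overlap heavily (indeed the same exterior point can be a Delaunay neighbour of vertices in several discs), so the sequence $(A_i)$ cannot be coupled to i.i.d.\ copies by the cone-disjointness Lemma~\ref{lem:independent-cones} alone. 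This is exactly the kind of dependence that forced the use of more delicate tools already in the paper's treatment of border points, and your Bernstein-type argument does not address it.

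The paper sidesteps the step-by-step dependence entirely by a global argument. It sets $W=\bigcup_i D_i$, encloses the walk in a large ball $W^\odot$, and uses planarity (Euler's relation) to bound the total number of crossing edges by $3(|W\cap\ppp|+|B_W|+|Y_W|)$, where $B_W$ is the set of \emph{border points} in $W^\odot\setminus W$ (points with an empty sector pointing towards $W$ or $\partial W^\odot$) and $Y_W$ the exceptional points outside $W^\odot$ with a neighbour in $W$. The first term is the visited vertices, already handled by Proposition~\ref{pro:bound_substeps}. The border-point count $|B_W|$ is then bounded by integrating $\p(x\in B_W)\le C\expo{-c\,\mathrm{dist}(x,\partial W)^2}$ over the annuli around $W$ to get $\e|B_W|=O(L_0)$, and concentration is obtained via Janson's inequality after bounding the chromatic number of the dependence graph of the indicators $\I{x\in B_W}$. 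This global reduction to vertex counts via planarity is the key idea you are missing; it replaces your unproven per-disc crossing-edge estimate and, more importantly, eliminates the need for any independence between steps.
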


\begin{proof}
In order to bound the number of such edges, we adapt the concept of the
\emph{border point} introduced by \citet{BoDe2006} to bound the stabbing number
of a random Delaunay triangulation. For $B\subseteq \cD_n$ and a point
$x\in \cD_n$, let $\|xB\|:=\inf\{\|xy\|: y\in B\}$ denote the distance from $x$
to $B$.

We consider the walk from $z$ to $q$ in $\cD_n$, letting
$$
  W=\bigcup_{i=1}^\kappa D_i\qquad \text{and}\qquad W^{\odot}
  :=
  b(\mathbf{w},2 \max\{\|zq\|, \omega^5_n\}),
$$ 
where $\mathbf{w}$ denotes the centroid of the segment $zq$ and we recall that
$b(\mathbf{x}, r)$ denotes the closed ball centred at $\mathbf{x}$ of radius
$r$. Then, for $x\in W^\odot$, let $C$ be the disc centred at $x$ and with
radius $\min\{\|xW\|,\|x\partial W^\odot\|\}$. Partition the disc $C$ into 8 
isometric cone-shaped sectors (such that one of the separation lines is
vertical, say) truncated to a radius of $\sqrt{3}/2$ times that of the outer
disc (see Figure~\ref{fig:border-point}). We say that $x$ is a border point if
one of the  8 
is a border point then there is no Delaunay edge between $x$ and a point lying
outside $C$, since a circle trough $x$ and $y\not\in C\subset W^\odot\setminus
W$ must entirely enclose at least one sector of $C$ (see dotted circle in
Figure~\ref{fig:border-point}). Thus if $x$ has a Delaunay edge with extremity
in $W$, then $x$ must be a border point.
\begin{figure}[t]
  \centering
  \includegraphics[scale=.7]{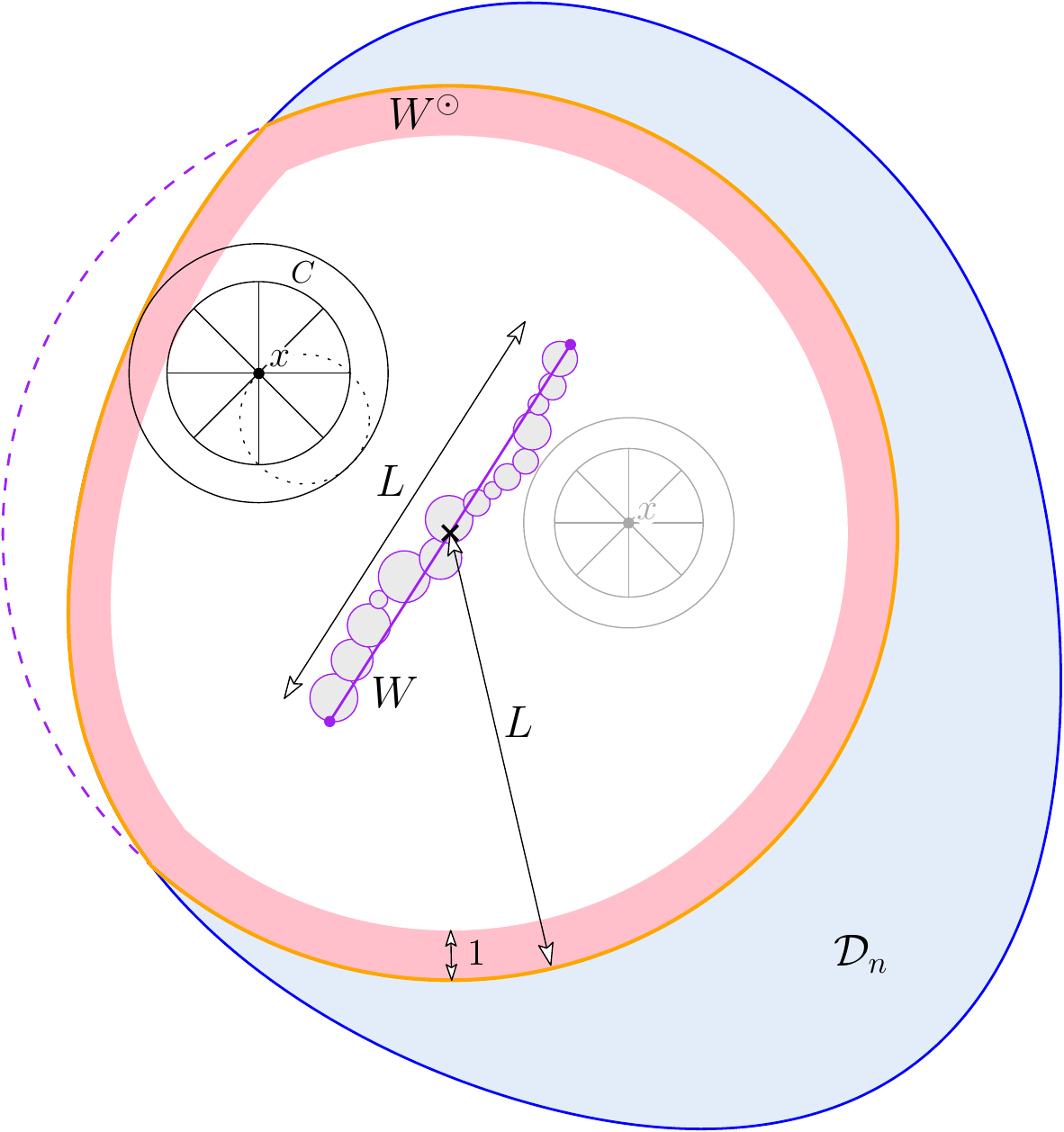}
  \caption{
    \label{fig:border-point} For the proof of Proposition~\ref{pro:bound_algo}.
  }
\end{figure}
The connection between border points and the number of crossing edges can be
made via Euler's relation, since it follows that a crossing edge is an edge of
the (planar) subgraph of $\DT(\ppp)$ induced by the points which either lie
inside $W$, are border points, or lie outside of $W^\odot$ and have a neighbour
in $W$. Let $B_W$ denote  of set of border points, $E_W$ the collection of
crossing edges, and $Y_W$ the collection of points lying outside of $W^\odot$
and having a Delaunay neighbour within $W$. Then
\begin{equation}\label{eq:euler}
A(z)\le |E_W| \le 3 (|W\cap \ppp| + |B_W|+ |Y_W|).
\end{equation}
Proposition~\ref{pro:bound_substeps}
bounds $|W\cap \ppp|$, as this is exactly the set of
\emph{visited vertices}. 
Lemmas~\ref{lem:bound_Yw} and~\ref{lem:bound_Bw} 
bounding $|B_W|$ and $|Y_W|$ complete the proof.
\end{proof}

\begin{lemma}\label{lem:bound_Yw}
For all $n$ large enough, we have
$$
  \p\bigg(|Y_W|\ge 10 \max\{ L_0,  \omega_n^5 \}\bigg) 
  \le 2\expo{-\omega_n^{5/4}}.
$$
\end{lemma}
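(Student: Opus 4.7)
The plan is to reduce the bound on $|Y_W|$ to a statement about the existence of unusually large empty discs in $\cD_n$, which is directly controlled by Lemma~\ref{lem:largest_empty_disc}. The intuition is that each $y\in Y_W$ witnesses a Delaunay edge of extraordinary length, which forces an empty disc of comparable radius to sit near the crossing of that edge with $\partial W^\odot$; the number of such well-separated empty discs is then limited by a packing along $\partial W^\odot$, whose perimeter is only $O(\max\{L_0,\omega_n^5\})$.

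First, I would restrict attention to the good event $G_\kappa$ from \eqref{eq:bound_good}, which holds except on a set of probability at most $\expo{-\omega_n^{3/2}}$ and guarantees that every search disc has radius at most $\omega_n/\xi$. On this event, $W$ is contained in the $\omega_n$-tube around the segment $[z,q]$ and hence deep inside $W^\odot$, so that for every $y\in Y_W$ and every Delaunay neighbour $p\in W$, the edge $\|yp\|\ge \max\{L_0,\omega_n^5\}-O(\omega_n)$.

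Next, for each $y\in Y_W$ I would fix such a neighbour $p\in W$, let $x_y$ be the intersection of the segment $yp$ with $\partial W^\odot$, and extract from the empty witness disc through $y$ and $p$ an empty sub-disc $B_y$ of radius at least $c\,\omega_n^{1/2+\xi}$ that lies within bounded distance of $x_y$. The geometric fact I would use is that when a disc through two points $y,p$ is very large, its intersection with a fixed neighbourhood of an intermediate point $x_y$ is nearly a half-plane, so a disc of the desired radius tangent to $\partial W^\odot$ at $x_y$ and on the correct side fits inside the witness disc.

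Finally, I would cover $\partial W^\odot$ by $O(\max\{L_0,\omega_n^5\})$ arcs of constant length and observe that, by the packing construction of Lemma~\ref{lem:largest_empty_disc}, the number of disjoint empty discs of radius $c\,\omega_n^{1/2+\xi}$ intersecting a given arc is at most a constant. Summing over arcs, $|Y_W|$ exceeds $10\max\{L_0,\omega_n^5\}$ only if either $G_\kappa$ fails or some arc supports $\omega(1)$ such empty discs, the latter event being swallowed by Lemma~\ref{lem:largest_empty_disc} with room to spare to give the $\expo{-\omega_n^{5/4}}$ probability. The main obstacle is the geometric extraction of the localized empty sub-disc $B_y$ in the second step: one needs a uniform, quantitative statement that turns the qualitative ``the witness disc is huge'' into ``there is an empty disc of a definite, only polylogarithmic radius located within constant distance of $x_y$,'' so that a packing of constant-length arcs is sharp enough to yield the linear-in-perimeter bound.
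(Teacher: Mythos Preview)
Your plan has a genuine gap, and it is exactly the boundary effect you never mention. If every $y\in Y_W$ really forced an empty sub-disc $B_y\subset \cD_n$ of radius $c\,\omega_n^{1/2+\xi}$, then Lemma~\ref{lem:largest_empty_disc} alone would give $Y_W=\varnothing$ on the good event, and your packing along $\partial W^\odot$ would be unnecessary; conversely, the packing cannot bound $|Y_W|$, since distinct $y$'s may have nearby crossing points $x_y$ and overlapping $B_y$'s, so no disjointness is available. The real obstruction to your second step is that when the walk vertex $p\in W$ lies within, say, $\omega_n^{-3}$ of $\partial \cD_n$, the Delaunay witness disc through $p$ and $y$ can lie almost entirely outside $\cD_n$: its intersection with $\cD_n$ need not contain any ball of polylogarithmic radius centred in $\cD_n$, so no empty sub-disc can be extracted and Lemma~\ref{lem:largest_empty_disc} does not apply. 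Your proposal therefore does not control the contribution of these near-boundary vertices at all.

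The paper's proof is organised precisely around this dichotomy. For $p\in W$ at distance at least $\omega_n^{-3}$ from $\partial\cD_n$, any witness disc for an edge from $p$ to a point outside $W^\odot$ overlaps an unconditioned region of $\cD_n$ of area at least $c\,\omega_n^{-3}\cdot \omega_n^{5}=c\,\omega_n^{2}$, so a union bound over $O(n^2)$ potential edges shows that no such edge exists with probability $1-\expo{-\omega_n^{3/2}}$; this part contributes nothing to $|Y_W|$. The linear bound $10\max\{L_0,\omega_n^5\}$ comes entirely from the complementary case: the number of walk vertices within $\omega_n^{-3}$ of $\partial\cD_n$ is at most $10\max\{L_0,\omega_n^5\}\cdot \omega_n^{-3}$ with high probability, and each of them contributes at most $\Delta_\ppp$ edges, which is controlled by the maximum-degree bound of Proposition~\ref{prop:max_degree}. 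In short, the perimeter of $\partial W^\odot$ is not what produces the linear term; the thin strip of $W$ along $\partial\cD_n$, counted with the maximum Delaunay degree, is.
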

\begin{proof}
  Heuristically, our proof will follow from the fact that,
  with high probability, a Delaunay edge away
  from the boundary of the domain is not long enough to span the distance
  between a point within the walk, and a point outside of $W^\odot$. 
  Unfortunately our proof is complicated by points on the walk
  which are very close to the boundary of the domain, since in this case, those
  points might have `bad' edges which are long enough to escape $W^\odot$. To
  deal with this, we will take all points in the walk that are close to the
  border, and imagine that every Delaunay edge touching one of these points is
  such a `bad' edge. The total number of these edges will be bounded by the
  maximum degree.

  To begin, we give the first case. Consider an arbitrary point $x\in W\cap\ppp$
  that is at least $\omega_n^{-3}$ away from the boundary of $\cD_n$. Suppose
  this point has a neighbour outside of $W^\odot$, then its circumcircle
  implicitly overlaps an unconditioned region of $\cD_n$ with area at least
  $c\,\omega_n^{-3} \,
  \omega_n^{5} = c\, \omega_n^2$ (for $c>0$ a constant depending on the shape
  of the domain). The probability that this happens for $x$ is thus at most
  $\expo{-\omega_n^2}$. Now note that there are at most $2n$ points in $\ppp$ with
  probability bounded by $\expo{-\omega_n^2}$ and at most $4n^2$ edges between
  points of $x\in W\cap \ppp$ and $x\in \{ W^\odot\}^c \cap \ppp$. By the union
  bound, the probability that any such edge exists is at most
  $$(4n^2)\expo{-c\,\omega_n^2} + \expo{-\omega_n^2} \leq \expo{-\omega_n^{3/2}}.$$

  For the second case, we count the number of points within $\omega_n^{-3}$ of
  the boundary of the domain. Using standard arguments, we have that there are
  no more than $10\max\{L, \omega_n^5 \}\cdot \omega_n^{-3}$ such points, with
  probability at least $\expo{-\omega_n^2}$. Each of these has at most
  $\Delta_\ppp$ edges that could exit $W^\odot$, where $\Delta_\ppp$ is the
  maximum degree of any vertex in $\DT(\ppp)$, which is bounded in
  Proposition~\ref{prop:max_degree}. Thus, the number of such bad edges is at
  most $10\max\{L, \omega_n^5 \}\omega_n^{-3} \cdot
  \omega_n^{3}$ with probability at least $\exp(-\omega_n^{2}) +
  \exp(-\omega_n^{5/4})$
\end{proof}
} 

\begin{lemma}\label{lem:bound_Bw}
  For all $n$ large enough, and universal constant $C>0$,
  $$
    \p\bigg(
      |B_W|\ge C \max\{ L_0,  \omega_n^6 \}
    \bigg)
    \le 
    2\expo{-\omega_n^{3/2}}.
  $$
\end{lemma}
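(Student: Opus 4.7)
The plan is to adapt the Bose--Devillers border-point machinery to the exclusion region $W^\odot \setminus W$: first bound $\e[|B_W| \mid W]$ via an integral estimate against Gaussian-decaying probabilities, then promote this to a concentration bound via a dyadic decomposition. The first ingredient is a perimeter estimate. On the intersection of $G_\kappa$ and the event from Corollary~\ref{cor:total_number_steps}, we have $|\partial W| \leq \sum_{i=0}^{\kappa - 1} 2\pi R_i$ with $\kappa = O(L_0 + \omega_n^3)$. The method of bounded variances used in Proposition~\ref{pro:bound_substeps}, applied to the conditionally i.i.d.\ sequence $(R_i)$, yields $\sum_i R_i = O(L_0) + O(\omega_n^4\sqrt{L_0})$ with failure probability at most $e^{-\omega_n^{3/2}}$; the outer boundary $|\partial W^\odot| = O(\max\{L_0,\omega_n^5\})$ is deterministic given the walk.

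For the expectation, set $d(x) := \min\{\|xW\|,\|x\partial W^\odot\|\}$ for $x \in W^\odot \setminus W$. A point $x$ is a border point only if at least one of 8 specified truncated cone sectors (each of area $c_0 d(x)^2$ with $c_0 = 3\pi/32$) is empty of $\ppp\setminus\{x\}$, so the per-point probability is at most $8 e^{-c_0 d(x)^2}$. Conditioning on $\ppp \cap W$ (which determines $W$ and the walk), the process in $W^c$ is an independent unit-intensity Poisson process by the restriction property, and by Mecke's formula
$$\e\bigl[|B_W| \,\big|\, W\bigr] \;\leq\; 8 \int_{W^\odot \setminus W} e^{-c_0 d(x)^2}\, dx \;=\; O\bigl(|\partial W| + \kappa + |\partial W^\odot|\bigr),$$
where the final equality uses the coarea formula and the bound $|\partial W^{(d)}| \leq \sum 2\pi(R_i + d)$ on the $d$-offset of $\partial W$, combined with the Gaussian tail of $e^{-c_0 d^2}$. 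Together with the perimeter bound, this gives $\e[|B_W| \mid W] = O(\max\{L_0,\omega_n^5\})$ on a high-probability event.

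For concentration, I would decompose $W^\odot \setminus W$ into dyadic shells $S_j = \{x : d(x) \in [2^{j-1}, 2^j)\}$ for $0 \leq j \leq O(\log n)$. Conditional on $W$, the count of border points in $S_j$ depends on the Poisson process only through an annulus of thickness $O(2^j)$, with mean decaying geometrically in $j$. A standard Poisson Chernoff bound on each shell (bounding the total point count and then the exponentially rare border condition per point), summed by a union bound over the $O(\log n)$ scales with an additional factor of $\omega_n$ in the threshold to absorb logarithmic slack, yields the stated $C\max\{L_0,\omega_n^6\}$ with failure probability $\leq 2e^{-\omega_n^{3/2}}$. The principal obstacle is that border points are not independent through the Poisson process and that $W$ itself is random; conditioning on $\ppp \cap W$, which measurably determines $W$ from the walk, isolates the correlations and leaves a clean independent Poisson structure outside $W$, at the cost of verifying that the high-probability event from the perimeter step survives the conditioning (which it does, since the walk depends only on $\ppp \cap W$).
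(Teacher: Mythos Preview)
Your expectation estimate is essentially the paper's: both integrate the per-point border probability $O(e^{-c\,d(x)^2})$ over $W^\odot\setminus W$ in distance layers from $\partial W$ and $\partial W^\odot$, and both feed in the perimeter bound $|\partial W|\le 2\pi\sum_i R_i = O(L_0)$ on the good event. That part is fine.

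The gap is in the concentration step. Conditioning on $\ppp\cap W$ does make the restriction of $\ppp$ to $W^c$ an independent unit-rate Poisson process, but it does \emph{not} make the indicators $\I{x\in B_W}$ independent across $x$: whether $x$ is a border point is determined by $\ppp$ inside the disc of radius $d(x)$ about $x$, and for nearby points these discs overlap. Consequently $|B_W\cap S_j|$ is neither a Poisson variable nor a sum of independent Bernoullis, and a ``standard Poisson Chernoff bound'' does not apply to it. Your dyadic decomposition organises the \emph{scale} of the dependence (within $S_j$ the dependence range is $O(2^j)$) but does not remove it; you still need a concentration tool for locally dependent sums. The phrase ``bounding the total point count and then the exponentially rare border condition per point'' only delivers an expectation bound, not a tail bound, unless you either (i) upper-bound border points by total points in the near shells --- which costs an extra factor of the tube width and yields $O(\omega_n L_0)$ rather than $O(L_0)$ when $L_0$ is large --- or (ii) invoke a dependent-sum inequality.

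The paper takes route (ii) directly: it discards points with $d(x)\ge \omega_n$ (these have border probability $\le e^{-c\omega_n^2}$, hence by a union bound none exist), and on the remaining strip applies Janson's inequality for sums of locally dependent indicators. The chromatic number $\chi$ of the dependence graph is at most the maximal number of Poisson points in any disc of radius $2\omega_n$, which is $O(\omega_n^2)$ with high probability, and the total number of points in the strip is $O(\max\{L_0,\omega_n^5\}\cdot\omega_n)$ with high probability. Janson's bound $\exp\bigl(-2t^2/(\chi\cdot N)\bigr)$ with $t\asymp L_0+\omega_n^6$ then gives the stated tail. Your dyadic scheme could be completed by applying Janson (or a bounded-differences inequality after revealing $\ppp$ in small boxes) within each shell, but as written it is missing exactly this ingredient.
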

\begin{proof}
  Since $|B_W|$ is a sum of indicator random variables, it can be bounded using
  (a version of) Chernoff--Hoeffding's method. The only slight annoyance is that
  the indicators $\I{x\in B_W}, x\in \ppp \cap W^\odot$ are not independent.
  Note however that $\I{x\in B_W}$ and $\I{y\in B_W}$ are only dependent if the
  discs used to define membership to $B_W$ for $x$ and $y$ intersect. There is a
  priori no bound on the radius of these discs, and so we shall first discard
  the points $x\in \ppp$ lying far away from $\partial \Wo$ and $W$. More
  precisely, let $B_W^\star$ denote the set of border points lying within
  distance $\omega_n$ of either $W$ or $\partial \Wo$, and
  $B_W^\bullet=B_W\setminus B_W^\star$. Observe now that we may bound
  $|B_W^\bullet|$ directly using Lemma~\ref{lem:largest_empty_disc}, since a
  point is only a border point if one of its cones is empty, and each 
  such empty cone contains a large empty circle. So for sufficiently large $n$,
  \begin{align}\label{eq:bound_Bbul}
    \p\big( |B_W^\bullet| \ne 0 \big) \le \expo{-\omega_n^{3/2}}.
  \end{align}
Bounding $|B_W^\star|$ is now easy since the amount of dependence in the family 
$\I{x\in B_W}, x\in \ppp\setminus W$ is controlled and we can use the inequality 
by Janson \cite{Janson2004b,DuPa2009}. We start by bounding the expected value 
$\e\,|B_W^\star|$. Note that for a single point $x\in \ppp_n$, by definition the disc used to 
define whether $x$ is a border point does not intersect $W$ and stays entirely within $\cD$, so
\begin{align}\label{eq:prob_border_point}
  \p_x(x\in  B_W^\star)=\p(x\in B_W^\star)
  & \le 24 \expo{-\frac \pi {32} \min\{\|xW\|, \|x\partial \Wo\|\}^2 } \notag \\
  & \le 24 \expo{-\frac \pi {32} \|xW\|^2} 
  + 24 \expo{-\frac \pi {32} \|x \partial \Wo\|^2}
\end{align}
and $\ppp_n$ is unconditioned in $\cD\setminus W$. 
Partition $\cD\setminus W$ into disjoint sets as follows:
$$\cD\setminus W = \bigcup_{i=0}^\infty U_i$$
where $U_i := \{x\in\cD \;:\;i\leq\|xW\|< i+1 \}.$ Similarly, the sets 
$U_i':= \{x \in \Wo : i\le \|x \partial \Wo\| <i+1\}$ form a similar partition 
for $\Wo$. 
Writing $\lambda$ for the 2-dimensional Lebesgue measure and using \eqref{eq:prob_border_point}
above, we have
\begin{align*}
    \e|B_W^\star| 
    & = \e\left[\sum_{x\in \ppp }\I{x\in \Wo \setminus W}\; \I{x\in B^\star_W} \right]\\
    &= \int_{\Wo \setminus W} \p_x(x\in B^\star_W)\, \lambda(dx)\\
    &\leq \sum_{i=0}^\infty 
    \int_{U_i } 24\expo{-\pi i^2/32} \,\lambda(dx) + 
    \sum_{i=0}^\infty \int_{U_i'} 24 \expo{-\pi i^2/32}\,\lambda(dx)\\
    &= 24\sum_{i=0}^\infty (\lambda( U_i) + \lambda(U_i') )\; \expo{-\pi i^2/32}.
\end{align*}
We may now bound $\lambda(U_i)$ and $\lambda(U_i')$ as follows. Recall that $W$ is a union of discs
$W = \cup_i D_i$. We clearly have that
\begin{align*}
    U_i &\subseteq \bigcup_{j=0}^{\kappa-1}  \big\{ x\in\cD \;:\; i\leq\|xD_j\|<i+1 \big\}.
\end{align*}
Note that
\begin{align}\label{eq:walk_boundary}
    \lambda\big(\big\{ x\in\cD \;:\; i\leq\|xD_j\|<i+1 \big\}\big)
    &\leq \pi ((R_j+i+1)^2 - (R_j+i)^2)\\
    &= \pi (2(R_j + i) + 1).
\end{align}
So, assuming there are $\kappa$ steps in the walk we get 
\begin{align*} 
    \lambda(U_i) 
    &\leq \sum_{j=0}^{\kappa-1} \pi (2(R_j + i) + 1) 
    = 2\pi \sum_{j=0}^{\kappa-1} R_j + \pi (i+1)\kappa.
\end{align*}
Regarding $\lambda(U_i')$, note first that $\Wo$ is convex for it is the intersection of two convex
regions. It follows that its perimeter is bounded by $4\pi \max\{\|zq\|, \omega_n\}$, so that 
$\lambda(U_i')\le 4 \pi \max\{\|zq\|, \omega_n\}$ for every $i\ge 0$. 
It now follows easily that there exist universal constants $C,C'$ such that  
\begin{align*}
\e\,|B^\star_W|=  \e\, \e \big[ |B_W^\star| \; \big| \; R_i, i\ge 0\big] 
  &\le C \,\e \left[ \sum_{j=0}^{\kappa-1} R_j + \kappa + \max\{\|zq\|, \omega_n\}\right]\\
  &\le C' \max\{\|zq\|, \omega_n\}.
\end{align*}

For the concentration, we use the fact that if $\|xW\|,\|x\partial \Wo\| \le \omega_n$ then 
the chromatic number $\chi$ of the dependence graph of the family $\I{x\in B^\star_W}$ is 
bounded by the maximum number of points of $\ppp_n$ contained in a disc of radius $2\omega_n$. 
We then have
\begin{align*}
\p(\chi \ge 8\pi\omega_n^2)
&\le \p(\exists x\in \cD: \ppp_n\cap b(x,2\omega_n))\\
&\le \e\left[ \sum_{x\in \ppp_n} \p_x(|b(x,2\omega_n)\cap \ppp_n|\ge 8\pi \omega_n^2)\right]\\
&\le \e\left[ \sum_{x\in \ppp_n} \p(\Po(4\pi\omega_n^2)\ge 8\pi \omega_n^2)\right]\\
&\le \expo{-\omega_n^2},
\end{align*}
for all $n$ large enough, using the bounds for Poisson random variables we have 
already used in the proof of Corollary~\ref{cor:total_number_steps}.  
Let
\begin{equation*}
  W^\partial 
  :=
  \big\{ 
    x\in W^\odot 
    \; \big| \;
    \max  \{ \|xW\|, \|x\partial W^\odot \|\} 
    \leq \omega_n 
  \big\}.
\end{equation*}
Following Equation~\eqref{eq:walk_boundary} and by the convexity of $W^\odot$,
there exists a universal constant $C''$ such that
\begin{equation*}
  \p \bigg( | \ppp_n \cap W^\partial | \ge C'' \max\{L, \omega_n^5\} \cdot \omega_n^2 \Bigg)
  \le \expo{-\omega_n^2}.
\end{equation*}
By Theorem~3.2 of \cite{DuPa2009}, we thus obtain for $t>0$,
\begin{align*}
& \p(|B^\star_W|\ge \e \,|B^\star_W| + t) \\
& \le \e\left[\expo{-\frac{2t^2}{\chi \cdot |\ppp_n \cap W^\partial|}} \right]\\
& \le \expo{- \frac{t^2}{8\pi\, C'' \max\{L_0, \omega^5_n\} \cdot \omega_n^4}} 
  + \p\Big(\chi \ge 8\pi \omega_n^2\Big) 
+ \p\Big(|\ppp_n \cap W^\partial| > C'' \max\{L_0, \omega^5_n\} \cdot \omega_n^2\Big) \\
& \le \expo{- \frac{t^2}{8\pi\, C'' \max\{L_0, \omega^5_n\} \cdot \omega_n^4}} + 2\expo{-\omega_n^2}.
\end{align*}
The result follows for $n$ sufficiently large by choosing 
$ t :=  C''\left(L_0 +\omega^6_n\right)$.
\end{proof}


\subsection{Algorithmic complexity}\label{sec:algorithmic_complexity}
Whilst we have given explicit bounds on the number of sites in $\ppp_n$\
that may be accessed by an instance of $\proc{Cone-Walk}$, we recall that the
complexity of the algorithm $\proc{Cone-Walk}(z,q)$ does not follow directly.
This is because the $\proc {Cone-Walk}$ algorithm must do a small amount of
computation at each step in order to compute the vertex which should be chosen
next. We proceed by defining a random variable $T(z,q)$, which will denote the
number of operations required by $\proc{Cone-Walk}(z,q)$ in the RAM model of
computation given an implementation based upon a priority queue. We conjecture
that the bound for Proposition~\ref{prop:complexity} given in this section is
not tight, and that the algorithmic complexity is fact be bounded by $O(L_0 +
\omega_n^4)$. Unfortunately the dependency structure in algorithms of this type
makes such bounds difficult to attain.
\begin{proposition}  \label{prop:complexity}
  Let $T(z,q)$ be the number of steps required by the $\proc{Cone-Walk}$
  algorithm to compute the sequence of stoppers given by the cone-walk process
  between $z$ and $q$ along with the path generated by $\proc{Simple-Path}$
  in the RAM model of computation.
  Let $c_1>0$ be an implementation-dependent constant, then for $n$
  large enough, we have
  \begin{align*}
    \p\bigg(
      T(z,q) > c_1 \cdot L_0 \log  \omega_n + \omega_n^4
    \bigg) \leq
    10\expo{-\omega_n^{3/2}}.
  \end{align*}
\end{proposition}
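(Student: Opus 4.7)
The plan is to combine the deterministic per-step analysis from Section~\ref{sec:complexity} with the probabilistic bounds on the number of vertices visited (Proposition~\ref{pro:bound_substeps}) and the uniform per-step bound on the number of vertices accessed (Proposition~\ref{prop:max_sites_in_one_step}). With the priority-queue implementation, a step in which $k$ points lie inside the search disc and $m$ Delaunay edges cross its boundary is handled in $O(k\log(m+k))$ operations, so writing $k_i, m_i$ for these quantities at step $i$, one has
\[
T(z,q) \;\le\; c\sum_{i=0}^{\kappa-1} k_i \log(m_i + k_i)
\]
for some implementation-dependent constant $c>0$.

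The first step is to apply Proposition~\ref{prop:max_sites_in_one_step} to tame the logarithm. On the event $\{M_{\max} \le \omega_n^{3+\xi}\}$, which fails with probability at most $2\expo{-\omega_n^{1+\xi/4}}$, we have $m_i + k_i \le M_{\max} \le \omega_n^{3+\xi}$ for every $i$ simultaneously, and hence $\log(m_i + k_i) = O(\log \omega_n)$ uniformly. Pulling this factor out of the sum yields
\[
T(z,q) \;\le\; c'\log \omega_n \cdot \sum_{i=0}^{\kappa-1} k_i
       \;=\; c'\log \omega_n \cdot K,
\]
where $K$ is exactly the number of visited vertices.

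The second step is to plug in the bound $K \le c''L_0 + \sqrt{L_0}\,\omega_n^4 + \omega_n^3$ from Proposition~\ref{pro:bound_substeps}, which is valid outside an event of probability $7\expo{-\omega_n^{3/2}}$. This yields a bound of the shape $c_1 L_0 \log \omega_n + (\sqrt{L_0}+1)\omega_n^4 \log \omega_n$. Since $\omega_n \ge \log n$, the factor $\log \omega_n$ is absorbed by inflating the exponent on $\omega_n$ slightly (or by treating the two regimes $L_0 \gtrsim \omega_n^8$ and $L_0 \lesssim \omega_n^8$ separately), giving the advertised form $c_1 L_0 \log \omega_n + \omega_n^4$. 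A union bound over the two bad events above provides the stated tail $10\expo{-\omega_n^{3/2}}$, with room to spare.

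The main obstacle is that the per-step costs $k_i \log(m_i+k_i)$ are neither independent across steps nor amenable to the Bernstein-type arguments used for $\kappa$, $K$ and $\Lambda$: the logarithm entangles the randomness in a way that resists the natural conditioning on the radii $R_i$. The trick that makes the argument go through is precisely to replace the logarithm by the deterministic surrogate $\log \omega_n$ via the worst-case bound of Proposition~\ref{prop:max_sites_in_one_step}, thereby reducing the claim to the already established bound on $K$. Removing the $\log \omega_n$ factor (as conjectured in the discussion) would require exploiting the \emph{typical} small value of $m_i+k_i$ at each step rather than merely the worst-case bound, and would presumably need a substantially different approach.
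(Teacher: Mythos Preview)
Your approach is essentially identical to the paper's: bound $T$ by $c\sum_i \tau_i \log M_i$ (the paper writes $\tau_i$ where you write $k_i$, but these agree up to the two distinguished points), use Proposition~\ref{prop:max_sites_in_one_step} to replace each $\log M_i$ by $O(\log \omega_n)$ on a high-probability event, and then invoke Proposition~\ref{pro:bound_substeps} to control $\sum_i \tau_i = K$. The paper in fact conditions on $M_{\max}<\omega_n^5$ rather than $\omega_n^{3+\xi}$, but this is immaterial inside the logarithm.

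One remark: your handling of the lower-order term is no looser than the paper's, which simply cites Proposition~\ref{pro:bound_substeps} to justify $\p\big(\sum_i \tau_i \ge 3(L_0+\omega_n^3)\big)\le 7\expo{-\omega_n^{3/2}}$ without tracking the $\sqrt{L_0}\,\omega_n^4$ term. Your two-regime argument correctly absorbs $\sqrt{L_0}\,\omega_n^4\log\omega_n$ into $L_0\log\omega_n$ when $L_0\ge \omega_n^8$, but for small $L_0$ it only gives $\omega_n^{8}\log\omega_n$, not $\omega_n^4$. This is a defect of the stated error term rather than of your argument, and it is consistent with the $(1+\sqrt{\|zq\|})\,\omega_n^5$ that appears downstream in Proposition~\ref{pro:cone-walk}.
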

\begin{proof}
Define $M_i$ to be the number of sites accessed during the $i$'\textit{th}
step of the algorithm. We fix $z,q$ and write $T$ to denote $T(z,q)$ for
brevity. From Section~\ref{sec:complexity}, we know that for sufficiently large
$n$, we may choose a constant $c_2$ such that
\begin{align}\label{eqn:complexity}
  T \leq c_2 \sum_{i=0}^{\kappa-1}\tau_i \log(M_i).
\end{align}
So that to bound $T$, it suffices to bound the sum in \eqref{eqn:complexity}.
We have

\begin{align*}
  \begin{split}
    \p\bigg(
      \sum_{i=0}^{\kappa-1}\tau_i & \log(M_i) 
      \ge
      15(L_0 + {\omega_n}^3)\log(\omega_n)
    \bigg)\\
     & \le 
    \p\bigg(
      \sum_{i=0}^{\kappa-1}\tau_i\log(M_i) 
      \ge
      3(L_0 + {\omega_n}^3)\log(\omega_n^5) 
      \mid M_\text{max} < \omega_n^5 
    \bigg)  + 
    \p\bigg( 
     M_\text{max} \ge \omega_n^5 
    \bigg) \\ 
    &\leq
    \p\bigg(
      \sum_{i=0}^{\kappa-1}\tau_i
      \ge
      3(L_0 + {\omega_n}^3)
    \bigg) + 
    \expo{-\omega_n^{3/2}}.
  \end{split}
\end{align*}
The first term of which is bounded by Proposition~\ref{pro:bound_substeps},
and the bound on $M_\text{max}$ comes directly from the proof of
Proposition~\ref{prop:max_sites_in_one_step}.
\end{proof}

\section{Relaxing the model and bounding the cost of the worst query}
\label{sec:relax_query}
\subsection{About the shape of the domain and the location of the aim}
The analysis in Section~\ref{sec:analysis} was provided given the assumption
that $q$ was the centre of a disc containing $\ppp$ for clarity of exposition.
We now relax the assumptions on both the shape of the domain $\cD$ and the
location of the query $q$. Taking $\cD$ to be a disc with $q$ at its centre
ensured that $\disc(z,q,r)$ was included in $\cD$ for $r \leq \|zq\|$ and thus
the search cone and disc were always entirely contained in the domain $\cD$. If
we now allow $q$ to be close to the boundary, it may be that part of the search
cone goes outside~$\cD$.

To begin with, we leave $\cD$ unchanged and allow $q$ to be any point in $\cD$.
Given any point $z\in\cD$, the convexity of $\cD$ ensures that the line segment
$zq$ lies within $\cD$. Furthermore, one of the two halves of the disc of
diameter $zq$ is included within $\cD$. Thus for any $z,q\in \cD$ and $r\in
\reals$ the portion of $\cone(z,q,r)$ (resp. $\disc(z,q,r)$) within $\cD$ has
an area lower bounded by half of its actual area (including the portion outside
$\cD$). Since the distributions of all of the random variables rely on
estimations for the portions of area of $\cone(z,q,r)$ or $\disc(z,q,r)$ lying
inside $\cD$, we have the same order of magnitude for $\kappa$, $K$, $\Lambda$
and $T$, with only a degradation of the relevant constants. The proofs
generalise easily, and we omit the details. (Note however, that upper and lower
bounds in an equivalent of Proposition~\ref{pro:number_steps} would not match
any longer.)

The essential property we used above is that a disc with a diameter within
$\cD$ has one of its halves within $\cD$. This is still satisfied for smooth
convex domains $\cD$ and for discs whose radius is smaller than the minimal
radius of curvature of $\partial\cD$. Thus our analysis may be carried out
provided all the cones and discs we consider are small enough. The conditioning
on the event $G_k$ which we used in Section~\ref{sec:analysis} precisely
guarantees that for all $n$ large enough, on $G_k$, all the regions we consider
are small enough ($O(\log n/\sqrt n\,)=o(1)$ in this scaling), and that $G_k$
still occurs with high probability. These remarks yield the following result.
As before, $\cD_n=\sqrt n \cD$ denotes the scaling of $\cD$ with area $n$.

\begin{proposition}\label{pro:cone-walk}
Let $\cD$ be a fixed smooth convex domain of area $1$ and diameter $\delta$. 
Consider a Poisson point process $\ppp_z^n$ of intensity $1$ contained in 
$\cD_n=\sqrt n \cD$. Let $z,q\in \cD_n$. Let $\ppp_z^n$ be 
$\ppp^n$ conditioned on $z\in \ppp^n$. Then, there exist constants 
$C_{\Gamma,\cD}$, $A_{\Gamma,\cD}$, $\Gamma\in \{\kappa,K,\Lambda,T\}$, 
such that for the cone walk on $\ppp_z^n$, and all $n$ large enough, we have
$$\sup_{z,q\in \cD_n} \p\pth{\Gamma(z,q)> C_{\Gamma,\cD} \cdot \|zq\| 
+ (1+\sqrt{\|zq\|}) \omega_n^5\,} 
\le A_{\Gamma ,\cD} \cdot \expo{-\omega_n^{3/2}}.$$
\end{proposition}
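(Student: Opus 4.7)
The plan is to lift the estimates of Section~\ref{sec:analysis} to the general setting essentially verbatim, absorbing all geometric irregularities into the constants $C_{\Gamma,\cD}$ and $A_{\Gamma,\cD}$. The only two sources of discrepancy with the centred-disc model are: (i) a search disc or search cone may be truncated by $\partial\cD_n$, in which case its area inside $\cD_n$ is strictly smaller than its total area; and (ii) the boundary is curved, not straight. Both are controlled by the observation already emphasised in the preamble to the proposition: on the good event $G_k$, every search radius is at most $\omega_n/\xi = O(\log n)$, whereas the minimum radius of curvature of $\partial\cD_n$ is $\sqrt n \cdot \rho_\cD = \Theta(\sqrt n)$. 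Thus for $n$ large enough every disc and every cone we manipulate on $G_k$ is small compared with $\rho_{\cD_n}$.

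The first step is to upgrade the half-disc principle. For any $z,q\in\cD_n$ and $r\le\omega_n/\xi$, the segment $zq$ lies in $\cD_n$ by convexity, and the unique closed half-disc of diameter $zq$ lying, say, to the right of $zq$, is contained in $\cD_n$ provided $r < \rho_{\cD_n}$: this follows from approximating $\partial\cD_n$ locally by an osculating circle of radius at least $\rho_{\cD_n}$. Hence the area of $\cone(z,q,r)\cap\cD_n$ is at least $A r^2 /2$, and likewise for $\disc(z,q,r)\cap\cD_n$ with $(\pi-A)$ in place of $A$. Consequently the identities \eqref{eq:bound_Ri} and \eqref{eq:dist_alpha} become two-sided inequalities with the area constants replaced by constants lying between their ``half'' and ``full'' values, and in particular $\p(R_{i+1}>x\mid\cF_i,G_i^\star)\le\expo{-A x^2/2}\I{\xi x\le\omega_n}$. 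The bound \eqref{eq:bound_good} on $\p(G_k^c)$ remains valid, since it only uses the upper tail on $R_i$, which survives with the degraded constant.

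Second, once the one-step distributions have the same sub-Gaussian form (with $\cD$-dependent constants) and once the conditional independence of the $(R_i,\alpha_i)$ on $G_k^\star$ is preserved, Propositions~\ref{pro:number_steps}, \ref{pro:bound_substeps}, \ref{pro:walk_length}, \ref{pro:bound_algo} and~\ref{prop:complexity} all go through unchanged in their probabilistic content: the Bernstein-type inequalities from \cite{McDiarmid1998,DuPa2009}, the dyadic decomposition, and the border-point/Janson argument in Lemma~\ref{lem:bound_Bw} depend only on independence, sub-Gaussian tails, bounded means and variances, and the tail bound on $\p(G_k^c)$. Each of these quantities changes only by a constant factor depending on $\rho_\cD$ and the area of $\cD$. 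Crucially, the independence lemmas \ref{lem:independent-cones} and~\ref{lem:independent-discs} are purely geometric statements about \emph{full} cones and discs; truncation by $\partial\cD_n$ can only reduce overlap, so conditional independence on $G_k^\star$ is preserved.

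Finally, the uniformity over $(z,q)$ is immediate: the only way $(z,q)$ enters the derived bounds is through $L_0=\|zq\|$, while every constant produced depends solely on $\cD$ (through $\rho_\cD$ and the area factor lost to truncation). Taking the maximum of the four tail exponents $\p(\Gamma(z,q)>C_{\Gamma,\cD}\|zq\|+(1+\sqrt{\|zq\|})\omega_n^5)$ over $\Gamma\in\{\kappa,K,\Lambda,T\}$ and adjusting $\omega_n^5$ to absorb the $\omega_n^4$ and $\omega_n^6$ appearing in individual statements yields the claim. The main, and essentially only, obstacle is the careful verification that the half-area degradation stays uniform in $(z,q)\in\cD_n^2$; this is exactly what smoothness of $\cD$ buys us, since $\rho_\cD>0$ is a global lower bound.
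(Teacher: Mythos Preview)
Your argument is correct and follows the paper's own approach (the discussion immediately preceding the proposition in Section~\ref{sec:relax_query}): the half-area lower bound coming from smoothness plus convexity degrades the one-step distributional constants by at most a factor of two, the geometric independence Lemmas~\ref{lem:independent-cones} and~\ref{lem:independent-discs} survive truncation, and the concentration proofs of Section~\ref{sec:analysis} then carry over with $\cD$-dependent constants. One minor slip in phrasing: it is not a \emph{fixed} half of the search disc that is guaranteed to lie in $\cD_n$, but rather \emph{one} of the two halves (which one depends on where $\partial\cD_n$ sits relative to the axis $zq$); this is immaterial for the area lower bound you actually use.
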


Thus we obtain upper tail bounds for the number of steps $\kappa(z,q)$, the
number of visited sites $K(z,q)$, the length $\Lambda(z,q)$ and the complexity
$T(z,q)$ which are \emph{uniform} in the starting point $z$ and the location of
the query $q$. We now move on to strengthening the results to the worst queries
(still in a random Delaunay).

\subsection{The worst query in a Poisson Delaunay triangulation}
In this section, we prove a Poissonised version of Theorems~\ref{thm:local}
and~\ref{th:main-th}. The proof of the latter are completed in
Section~\ref{sec:depoiss}. Consider $\sup_{z\in \ppp^n,q\in \cD_n}\Gamma(z,q)$,
the value of the parameter for the \emph{worst possible} pair of starting point
and query location, for $\Gamma\in \{\kappa,K,\Lambda, T\}$.

\begin{theorem}\label{thm:main_poissonized}
There exist a constant $C_{\Gamma,\cD}$ depending only on 
$\Gamma$ and on the shape of $\cD$ such that, for all $n$ large enough,
\begin{align*}
\p\bigg(\exists z\in \ppp^n,\, q\in \cD_n~:~\Gamma(z,q) 
>
C_{\Gamma,\cD} \cdot \|zq\| + (1+\sqrt{\|zq\|})\log^4 n\,\bigg)
& \le \frac 1{n^2}.
\end{align*}
\end{theorem}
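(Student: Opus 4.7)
The plan is to derive the uniform bound from Proposition~\ref{pro:cone-walk}, which already gives the desired tail control for any fixed pair $(z,q)$, by reducing the continuum of possible queries to a polynomial-size set on which we can take a union bound.

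First condition on the high-probability event $\{|\ppp^n|\le 2n\}$, which holds with probability at least $1-e^{-n/3}$. By Lemma~\ref{lem:invariance} applied to $\bX=\ppp^n$, the arrangement $\Xi(\ppp^n)$ partitions $\cD_n$ into at most $2n^4$ cells in each of which the entire sequence of stoppers produced by the cone walk starting from any fixed $z\in\ppp^n$ is constant. Within a single cell $C$, the only remaining freedom for the query is through the termination step $\tau(z,q):=\min\{i: q\in\disc(Z_i,q,R_i)\}$; $C$ thus splits into finitely many $\tau$-level sub-regions on each of which $\Gamma(z,q)$ is determined (up to an irrelevant $O(1)$ perturbation coming from the algorithm's processing at the very last step) for every $\Gamma\in\{\kappa,K,\Lambda,T\}$. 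For each non-empty triple $(z,C,i)$, pick an arbitrary representative $q_{z,C,i}$. Since a walk has no more stoppers than sites, the total number of such representatives is at most $O(n^6)$ on the conditioning event.

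Apply Proposition~\ref{pro:cone-walk} with $\omega_n:=\log n$ to each representative pair and take a union bound over the $O(n^6)$ triples. This yields
\begin{equation*}
\p\!\left(\exists(z,C,i):\Gamma(z,q_{z,C,i}) > C_{\Gamma,\cD}\,\|z q_{z,C,i}\| + (1+\sqrt{\|z q_{z,C,i}\|})\,\omega_n^5\right) \le O(n^6)\cdot A_{\Gamma,\cD}\,e^{-\omega_n^{3/2}} \le \tfrac{1}{2n^2},
\end{equation*}
for all $n$ large enough, since $(\log n)^{3/2}\gg 8\log n$. To transfer the estimate to arbitrary queries, suppose $q$ lies in the level-$i$ sub-region of cell $C$: then $\Gamma(z,q)=\Gamma(z,q_{z,C,i})$ up to $O(1)$, and both $q$ and $q_{z,C,i}$ belong to the terminating disc $\disc(Z_i,q,R_i)$. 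On the event $G_\kappa$ of Section~\ref{sec:size_of_discs}, which holds with probability at least $1-e^{-\omega_n^{3/2}}$ simultaneously for every walk by another union bound built on~\eqref{eq:bound_good}, every radius $R_i$ is bounded by $\omega_n/\xi$, so $|\|zq\|-\|z q_{z,C,i}\||=O(\log n)$. Plugging this into the previous display and absorbing the $O(\log n)$ and $O(\sqrt{\log n}\,\omega_n^5)$ corrections into the polylogarithmic error term (adjusting $C_{\Gamma,\cD}$ if necessary) gives the claimed bound.

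The main technical delicacy is to verify carefully that $\Gamma$ is indeed constant up to $O(1)$ on each $\tau$-level sub-region for all four parameters $\kappa,K,\Lambda,T$ simultaneously, and that $G_\kappa$ can be turned into a uniform statement about every stopper radius of every walk starting from every $z\in\ppp^n$; both come essentially for free from the tools of Section~\ref{sec:analysis}, but the accounting must be done with care to keep the final polylogarithmic exponent under control.
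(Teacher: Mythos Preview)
Your approach has a genuine gap, and it is precisely the one the paper flags just before stating Proposition~\ref{pro:area_smallest_cell}: the representatives $q_{z,C,i}$ you choose are \emph{measurable functions of $\ppp^n$}, so you cannot apply Proposition~\ref{pro:cone-walk} to them. That proposition gives, for each \emph{deterministic} pair $(z,q)$, a bound on $\p(\Gamma(z,q)>\cdots)$ where the probability is over $\ppp^n_z$. The fact that this bound is uniform in $(z,q)$ does not let you plug in a $q$ that depends on the very randomness you are averaging over; ``$\sup_q \p(\text{bad}(q))\le\epsilon$'' does not imply ``$\p(\text{bad}(Q))\le\epsilon$'' for $Q=Q(\ppp^n)$. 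Your union bound over $O(n^6)$ triples is therefore not a sum of probabilities bounded by Proposition~\ref{pro:cone-walk}, and the displayed inequality is unjustified. The paper's fix is exactly to avoid this dependence: it samples $n^{2k}$ query points $\cQ_n$ \emph{independently} of $\ppp^n$, and then invokes Proposition~\ref{pro:area_smallest_cell} to guarantee (with high probability) that every cell of $\Xi(\ppp^n)$ is hit by some $q\in\cQ_n$. That lower bound on the smallest cell area is the substantive ingredient you are missing.

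There is a second, smaller issue. Lemma~\ref{lem:invariance} only guarantees that the \emph{sequence of stoppers} is constant on each cell; it does not say that the discs $\disc(Z_i,q,R_i)$, their radii $R_i$, or the set of intermediate vertices are invariant as $q$ moves within a cell. Thus your claim that $\Gamma$ is constant up to $O(1)$ on a $\tau$-level sub-region is not obvious for $\Gamma\in\{K,\Lambda,T\}$: as $q$ rotates about $Z_i$, the search disc rotates and rescales, so its contents can change. You acknowledge this as a ``technical delicacy,'' but it does not come ``essentially for free'' from Section~\ref{sec:analysis}. Your diameter argument for the $\tau$-level sets is fine once one has a \emph{uniform} (over all walks) bound on search-cone radii, but obtaining that uniform version of $G_\kappa$ is itself a worst-case statement over a continuum of $q$, so you are back to the same circularity.
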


By Lemma~\ref{lem:invariance}, the number of possible walks in a given Delaunay
tessellation of size $n$ is at most $n\times n^{4}$. Although it \emph{seems}
intuitively clear that this should be sufficient to bound the parameters for
the worst query, it is not the case: one needs to guarantee that there is a way
to sample points \emph{independently} from $\ppp^n$ such that all the cells are
hit, or in other words that the cells are not too small. We prove the following:

\begin{proposition}[Stability of the walk]
\label{pro:area_smallest_cell}There exists a partition of $\cD$ into at most
$2n^4$ cells such that the sequence of steps used by the cone walk algorithm
from any vertex of the triangulation does not change when $q$ moves in a region
of the partition. Furthermore, let $a(\ppp^n)$ denote the area of the smallest
cell. Then $$\p(a(\ppp^n)< \eta) < 1-O(n^8 \eta^{1/3}),$$ for all $n$ large
enough.
\end{proposition}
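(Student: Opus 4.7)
The first claim is immediate from Lemma~\ref{lem:invariance}: that lemma constructs $\Xi(\ppp^n)$ as an arrangement of at most $m=2n(n-1)$ rays, and the number of cells in an arrangement of $m$ lines (let alone rays) is at most $(m^2+m+2)/2$, which for $n$ large is bounded by $2n^4$.

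For the area bound I would use a union bound over ``witnesses'' of small cells. Observe first that every cell of $\Xi(\ppp^n)$ is a convex polygon, so if $a(\ppp^n)<\eta$ then a short convexity argument shows that three of the bounding rays of some cell, extended to full lines, enclose a triangle of area at most $C\eta$ for a universal constant $C$. Thus the event $\{a(\ppp^n)<\eta\}$ is contained in the event that some triple of rays of $\Xi$ forms such a small triangle, which is what I would bound instead.

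Each ray of $\Xi$ is anchored at a site $z\in\ppp^n$ and its direction is determined by $z$ together with at most two further sites of $\ppp^n$ (the pair of candidate stoppers whose priority as stopper interchanges across the ray), so each ray is a function of $O(1)$ sites and a triple of rays is a function of $O(1)$ sites. Combined with the bound $2(n-1)$ on the number of rays per anchor, a careful counting of the ``combinatorial types'' of bounding triples yields $O(n^8)$ types. For a fixed type I would bound the probability that the three rays form a triangle of area less than $C\eta$ via the Slivnyak--Mecke formula for the Poisson process $\ppp^n$: this reduces the probability to an integral over positions of the defining sites in $\cD_n$, and a change of variables from positions to coordinates adapted to the ray geometry (anchor location, ray direction, triangle scale) gives an $O(\eta^{1/3})$ bound for each type. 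A union bound then produces $\p(a(\ppp^n)<\eta)\le O(n^8\eta^{1/3})$.

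The main technical obstacle lies in this last probabilistic step. The direction of a ray of $\Xi$ is an \emph{implicit} function of its defining sites --- it is precisely the angle at which the identity of the stopper flips between two candidates --- so one has to unwind this implicit relation before the change of variables can be computed and the Jacobian estimated. Moreover, the $\eta^{1/3}$ exponent depends delicately on the way in which three rays can degenerate (whether nearly concurrent, nearly parallel, or both), and an incautious case analysis that does not distinguish these modes of degeneracy will produce a worse exponent, eroding the union bound. A secondary nuisance is that rays sharing an anchor are strongly correlated, which means the nine sites parametrising a triple of rays are not free and the combinatorial prefactor must be reduced accordingly in order to land on $n^8$ rather than $n^9$.
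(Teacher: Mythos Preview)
Your first paragraph (the $2n^4$ cell count via Lemma~\ref{lem:invariance}) is fine and matches the paper.

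For the area bound your route diverges from the paper, and the very first reduction fails. You claim that a small convex cell forces three of its bounding lines to enclose a triangle of area $O(\eta)$. This is false. Take the quadrilateral with vertices $(0,0)$, $(1,0)$, $(1,\varepsilon)$, $(\varepsilon'/2,\varepsilon')$ where $\varepsilon'=\varepsilon+\delta$ and $0<\delta\ll\varepsilon\ll 1$. Its area is $\Theta(\varepsilon)$, yet each of the four triples of edge-lines bounds a triangle of area either $\Theta(\varepsilon^2/\delta)$ or $\Theta(1)$, both of which are $\gg\varepsilon$. The obstruction is exactly the near-parallelism of the top and bottom edges: a cell can be small because two bounding rays make a tiny angle, with no short segment and no small triangle anywhere in sight. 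Your scheme has no mechanism to detect this mode of degeneracy, so the union bound over triangles cannot capture all small cells.

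The paper's argument is organised precisely to separate these two modes. It proves two lemmas: Lemma~\ref{lem:smallest_angle} bounds the smallest angle $\langle\ppp^n\rangle$ between any two rays by $\p(\langle\ppp^n\rangle<\beta)=O(n^5\beta)$, and Lemma~\ref{lem:shortest_segment} bounds the shortest segment $L(\ppp^n)$ by $\p(L(\ppp^n)<\gamma)=O(n^9\gamma^{1/2})$. One then observes that every cell contains the triangle spanned by one of its vertices and its two neighbours along the boundary, whose area is at least $L(\ppp^n)^2\tan\langle\ppp^n\rangle/2$; optimising over $\beta,\gamma$ with $\beta\gamma^2\sim\eta$ yields the claim. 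The heavy lifting is in Lemma~\ref{lem:shortest_segment}, which does require the kind of case analysis you anticipate (regular versus extreme rays, shared versus disjoint defining sites), but only for \emph{pairs} of rays meeting at a given point rather than triples.

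Two further remarks. First, your $n^8$ count is not justified: a regular ray uses three sites, so a generic triple uses nine and the naive union bound is $n^9$; the paper's decomposition gets $n^9$ from the segment lemma and $n^5$ from the angle lemma, and it is the balancing of these that produces the final $n^8\eta^{1/3}$. Second, the $\eta^{1/3}$ exponent does not drop out of a single change of variables: in the paper it arises from optimising the two separate tail bounds, and your hoped-for Jacobian calculation would have to reproduce this trade-off internally.
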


The proof of Proposition~\ref{pro:area_smallest_cell} relies on bounds on the
smallest angle and on the shortest line segment in the line arrangement. The
argument is rather long, and we think it would dilute too much the focus of
this section, so we prove it in Section~\ref{sec:area}.

\begin{proof}[Proof of Theorem~\ref{thm:main_poissonized}]
In order to control the behaviour of the walk aiming at any point $q\in \cD$,
it suffices to control it for one point of each face of the subdivision
associated with the line arrangement $\Xi(\ppp^n)$ introduced in
Section~\ref{sec:geometry}. Let $\cQ_n=\{q_i: 1\le 1\le n^{2k})\}$, for some
$k\ge 1$ to be chosen later, be a collection of i.i.d.\ uniform random points
in $\cD$, independent of $\ppp^n$. Let $E_n$ be the event that every single
face of the subdivision contains at least one point of $\cQ_n$. Since there are
at most $|\ppp^n|^4$ regions, Proposition~\ref{pro:area_smallest_cell} implies
that
\begin{align}\label{eq:bound_enc}
    \p(E_n^c) 
    & \le \p\big(E_n^c~|~a(\ppp^n)\ge n^{-k} \big) 
        + \p \big( a(\ppp^n)< n^{-k} \big)\notag\\
    & \le \e[|\ppp^n|^4] \cdot \p(\text{Bin}(n^{2k}, n^{-k})=0) + 
        O(n^8 \cdot n^{-k/3}) \notag\\
    & = O(n^4 \cdot \expo{-n^{k/2})} + O(n^{8-k/3}) \notag\\
    & = O(n^{8-k/3}).
\end{align}
For $x>0$ write $f_n(x):=C_{\Gamma,\cD} x + (1+\sqrt x) \log^4 n$. Then, we have
\begin{align}\label{eq:bound_bad_pair}
    & \p\big(\exists z\in \ppp^n, q\in \cD_n~:~ \Gamma(z,q) 
    \ge f_n(\|zq\|)\big)\notag\\
    & \le \p\big(\exists z\in \ppp^n, q\in \cD_n~:~ \Gamma(z,q) 
    \ge f_n(\|zq\|)~\big|~ E_n\big)
    + \p(E_n^c)\notag\\
    & = \p\big(\exists z\in \ppp^n, q\in \cQ_n~:~ \Gamma(z,q) 
    \ge f_n(\|zq\|)\big) 
    + \p(E_n^c)\notag\\
    & \le n^{2k} \cdot 
    \sup_{q\in \cD} \p\big(\exists z\in \ppp^n~:~\Gamma(z,q) 
    \ge f_n(\|zq\|) \big) 
    + \p(E_n^c).
\end{align}
Now for any fixed $q\in \cD$, and conditioning on $\ppp^n$, we see that
\begin{align*}
    \p\big(\exists z\in \ppp^n~:~\Gamma(z,q) \ge f_n(\|zq\|) \big)
    & \le \e \bigg[ \sum_{z\in \ppp^n} \I{\Gamma(z,q) \ge f_n(\|zq\|)}\bigg]\\
    & \le \e\bigg[\sum_{z\in \ppp^n} \I{\Gamma(z,q)\ge f_n(\|zq\|), |\ppp^n|\le 
    2n}\bigg] + \p\pth{|\ppp^n|>2n}\\
    & \le 2 n \sup_{z\in \cD} \p(\Gamma(z,q)\ge f_n(\|zq\|)) + \expo{-n/3}.
\end{align*}
The claim follows from \eqref{eq:bound_enc}, \eqref{eq:bound_bad_pair}, and
Proposition~\ref{pro:cone-walk} by choosing $k=40$. (Note that we could easily
obtain sub-polynomial bounds.)
\end{proof}

Before concluding this section, we note that the same arguments also yield the
Poissonised version of Theorem~\ref{thm:local} about the number of
neighbourhoods required to compute all the steps of any query. We omit the
proof. \begin{theorem}\label{thm:local_poisson} We have the following bound for
any run of $\proc{Cone-walk}$ on $\DT(\ppp^n)$, for $\ppp^n$ a Poisson point
process of rate $n$ in $\cD$:
$$
    \p\Big(\exists z \in \ppp^n, q\in \cD_n: M(z,q) 
    > \log^{1+\epsilon} n \Big) \le \frac 1 {n^2}.
$$
\end{theorem}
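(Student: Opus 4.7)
The plan is to mirror the discretisation argument used in the proof of Theorem~\ref{thm:main_poissonized}, replacing $\Gamma$ by the random variable $M$. The two ingredients needed are: first, a super-polynomial tail bound for $M(z,q)$ uniform in any fixed pair $(z,q)\in \cD_n\times \cD_n$; and second, the reduction to finitely many query points afforded by the line arrangement $\Xi(\ppp^n)$ of Lemma~\ref{lem:invariance}, together with the area estimate of Proposition~\ref{pro:area_smallest_cell}.

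For the single-query tail bound, I would first condition on the good event $G_\kappa$ from~\eqref{eq:bound_good}, which has probability at least $1-\expo{-\omega_n^{3/2}}$ and forces every search disc $\disc(Z_i,q,R_i)$ to have radius at most $\omega_n/\xi$, so that the vertices accessed during step $i$ all lie within Euclidean distance $2\omega_n/\xi$ of the stopper $Z_i$. The quantity $M(z,q)$ is then bounded by the maximum graph-distance in $\DT(\ppp^n)$ between two points at Euclidean distance $O(\omega_n)$, and a standard estimate---provable in the same spirit as the border-point/Euler argument used in Lemma~\ref{lem:bound_Bw}, by observing that a graph-distance path visiting more than $C\omega_n$ vertices along such a segment forces an abnormally long Delaunay edge or an empty disc---shows that this graph-distance is $O(\omega_n)=O(\log^{1+\varepsilon}n)$ with exceptional probability smaller than $\expo{-\omega_n^{1+\varepsilon/2}}$.

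The second stage follows the blueprint of the proof of Theorem~\ref{thm:main_poissonized} verbatim. I would sample $\cQ_n=\{q_1,\dots,q_{n^{2k}}\}$, i.i.d.\ uniform in $\cD$ and independent of $\ppp^n$, and let $E_n$ be the event that every cell of the subdivision induced by $\Xi(\ppp^n)$ contains a point of $\cQ_n$. Choosing $k$ sufficiently large, Proposition~\ref{pro:area_smallest_cell} combined with the bound $|\Xi(\ppp^n)|\le 2|\ppp^n|^4$ and the concentration $|\ppp^n|\le 2n$ yields $\p(E_n^c)=O(n^{-c})$ for any prescribed constant $c$. On $E_n$, the piecewise-constancy of $M(\cdot,q)$ as $q$ varies within a cell lets one replace the supremum over $q\in\cD_n$ by a supremum over $q'\in\cQ_n$. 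A union bound over at most $2n\cdot n^{2k}$ pairs $(z,q')$, combined with the single-query tail estimate, then delivers the claimed probability $1/n^2$.

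The main obstacle is the single-query bound itself, and specifically the sharpening from the immediate estimate $M(z,q)\le M_{\max}=O(\log^{3+\xi}n)$ (which would follow directly from Proposition~\ref{prop:max_sites_in_one_step}) down to the tighter $O(\log^{1+\varepsilon}n)$ required by the statement. This forces one to argue about graph-distance inside a disc of diameter $O(\log n)$ rather than merely bounding the number of accessed vertices; the remaining discretisation-and-union-bound machinery is routine and parallels the proof of Theorem~\ref{thm:main_poissonized} with only cosmetic changes.
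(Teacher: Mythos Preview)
The paper omits the proof, saying only that ``the same arguments'' as for Theorem~\ref{thm:main_poissonized} apply; your discretisation via $\Xi(\ppp^n)$ and Proposition~\ref{pro:area_smallest_cell} is therefore exactly what the paper gestures at, and that part of your outline is fine.

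Your ``main obstacle,'' however, is a red herring. First, the paper is internally inconsistent about the exponent: Theorem~\ref{thm:local} and the abstract use $\log^{3+\epsilon}n$, while the introduction and the present statement use $\log^{1+\epsilon}n$. Second, and more importantly, whichever exponent is intended, the relevant bound is \emph{already} worst-case over all pairs $(z,q)$ and needs no discretisation at all. If $M(z,q)$ counts neighbourhoods accessed in a step, then $M(z,q)\le 1+\max_i\tau_i$, and Lemma~\ref{lem:tau_i_is_small} gives $\tau_{\max}\le\omega_n^{1+\xi}$ uniformly over \emph{every} walk, because its proof conditions only on the global events $A$ (no large empty cone anywhere in $\cD_n$, via Lemma~\ref{lem:largest_empty_disc}) and $B$ (a grid-based density bound). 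If $M(z,q)$ counts vertices accessed, Proposition~\ref{prop:max_sites_in_one_step} gives $M_{\max}\le\omega_n^{3+\xi}$, again uniformly. This is in contrast to $\kappa,K,\Lambda,T$, whose single-pair bounds in Proposition~\ref{pro:cone-walk} genuinely depend on $(z,q)$ and therefore do require the arrangement argument.

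Your graph-distance sketch is thus unnecessary, and also not correct as stated: $M(z,q)$ is bounded by the number of intermediate points $\tau_i$ in the search disc, not by a graph distance, and the assertion that a path of more than $C\omega_n$ hops across Euclidean distance $O(\omega_n)$ ``forces an abnormally long Delaunay edge or an empty disc'' does not hold---many short Delaunay edges can accumulate in a small region with no large empty ball. The direct route is simply to quote Lemma~\ref{lem:tau_i_is_small} (for the exponent $1+\epsilon$) or Proposition~\ref{prop:max_sites_in_one_step} (for $3+\epsilon$) and be done.
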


\subsection{De-Poissonisation: Proof of Theorem~\ref{th:main-th}}
\label{sec:depoiss}
In this section, we prove that the results proved in the case of a Poisson
point process of intensity $n$ in a compact convex domain $\cD$
(Theorem~\ref{thm:main_poissonized}) may be transferred to the situation where
the collection of sites consists of $n$ independent uniformly random points in
$\cD$. In the present case, the concentration for our events is so strong that
the de-Poissonisation is straightforward.

Note that conditional on $|\ppp^n|=n$, the collection of points $\ppp^n$ is
precisely distributed like $n$ independent uniforms in $\cD$. Furthermore,
$|\ppp^n|$ is a Poisson random variable with mean $n$, so that by Stirling's
formula, as $n\to\infty$,
\begin{equation}\label{eq:llt}
    \p(|\ppp^n|=n)= \frac{n^n \expo{-n}}{n!} \sim \frac 1{\sqrt{2\pi n}}.
\end{equation}
This is small, but we can consider multiple copies of the process; if one
happens to have exactly $n$ points, but that none of them actually behaves
badly, then it must be the case that the copy that has $n$ points behave as it
should, which is precisely what we want to prove. To make this formal, consider
a sequence $\ppp^n_i$, $i\ge 1$, of i.i.d.\ Poisson point processes with mean
$n$ in $\cD$. Let $\eta_n$ be the first $i\ge 1$ for which $|\ppp^n_i|=n$.
Then, for any event $E^i$ defined on $\ppp^n_i$,
\begin{align*}
    \p(E_1\,|\, |\ppp^n_1|=n)
    & = \p(E_{\eta_n}) \\
    & \le \p(\exists j\le \eta_n: E_j)\\
    & \le \p(\exists j\le n: E_j, \eta_n\le n)\\
    & \le n \p(E_1) + \p(\eta_n\ge n)\\
    & \le n \p(E_1) + \expo{-\sqrt{n/(2\pi)}},
\end{align*}
where the last line follows from \eqref{eq:llt}. In the present case, we apply
the argument above to estimate the probabilities that the number of steps, the
number of sites visited, the length of the walk or the complexity of the
algorithm exceeds a certain value. In the case of the Poisson point process, we
have proved that any of these events has probability at most $1/n^2$, so that
we immediately obtain a bound of $O(1/n)$ in the case of $n$ i.i.d.\ uniformly
random points, which proves the main statement in Theorem~\ref{th:main-th}.

This implies immediately that, for $\delta$ the diameter of $\cD$, we have
\begin{align*}
\e\bigg[\sup_{z\in \bX^n,q\in \cD_n} \Gamma(z,q)\bigg] 
    & \le 2\delta C_\Gamma \sqrt n + 
    \p\bigg(\sup_{z\in \bX_n,q\in \cD_n} 
        \Gamma(z,q) \ge 2\delta C_\Gamma \sqrt n\bigg)\\
    & \le 2 \delta C_\Gamma \sqrt n + O(1/n)\\
    & \le 3 \delta C_\Gamma \sqrt n,
\end{align*}
for $n$ large enough, so that the proof of Theorem~\ref{th:main-th} is
complete. The proof of Theorem~\ref{thm:local} from
Theorem~\ref{thm:local_poisson} relies on the same ideas and we omit the proof.

\section{Extremes in the arrangement $\Xi(\ppp^n)$} \label{sec:area}
In this section, we obtain the necessary results about the geometry of the line
arrangement $\Xi$ introduced in Section~\ref{sec:stability}. In particular, we
prove Proposition~\ref{pro:area_smallest_cell} providing a uniform lower bound
on the areas of the cells of the subdivision associated to $\Xi$ which is
crucial to the proofs of our main results Theorems~\ref{thm:local}
and~\ref{th:main-th}.

The rays (half-lines) of the arrangement $\Xi$ all have one end at a site of
$\ppp^n$. Let $\Xi_x$ denotes the set of rays of $\Xi$ with one end at $x$; we
say that the rays in $\Xi_x$ originate at $x$. The rays come in two kinds:
\begin{itemize}
    \item \emph{regular rays} are defined by a triple $x,y,y'$ such that there
    exists $r>0$ and $q\in \bbR^2$ for which $y$ and $y'$ both lie on the front
    arc of the $\cone(x,q,r)$; such a ray is denoted by $\rho^-(x,y,y')$;
    \item \emph{extreme rays} correspond to one of the straight boundaries of
    $\cone(x,y,\infty)$, for some pair of points $x$, $y$; these two rays are
    denoted by $\rho^+_1(x,y)$ and $\rho^+_2(x,y)$ in such a way that the angle
    from $\rho^+_1$ to $\rho^+_2$ is $\pi/4$.
\end{itemize}
We let $\Xi_x^-$ and $\Xi_x^+$ denote the collections of regular and extreme
rays of $\Xi_x$, respectively. Our strategy to bound the area of the smallest
cell is to lower bound the angle between any two rays (Section~\ref{sec:angle})
and the length of any line segment (Section~\ref{sec:length}). We put together
the arguments and prove Proposition~\ref{pro:area_smallest_cell} in
Section~\ref{sec:proof_area}.

\subsection{The smallest angle between two rays}\label{sec:angle}
We start with a bound on the smallest angle in the arrangement. 

\begin{lemma}\label{lem:smallest_angle}
Let $\langle \ppp^n \rangle$ denote the minimum 
angle between any two lines of $\Xi(\ppp^n)$ which intersect within $\cD$. 
Then, there exists a constant $C$ such that, for any $\beta\in (0,\pi/8)$, 
$$\p(\langle \ppp^n \rangle < \beta)\le C n^5\beta.$$
\end{lemma}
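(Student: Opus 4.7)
The plan is to apply Markov's inequality to $N_\beta$, the number of unordered pairs of rays of $\Xi(\ppp^n)$ whose angle is less than $\beta$ and which intersect within $\cD$; it then suffices to show $\e[N_\beta]=O(n^5\beta)$.

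First I would parameterise each ray by the sites of $\ppp^n$ defining it. An extreme ray $\rho^+_i(x,y)$ is determined by the ordered pair $(x,y)$ and has direction equal to $\arg(y-x)$ rotated by $\pm\pi/8$. A regular ray $\rho^-(x,y,y')$ is determined by the triple $(x,y,y')$ and has direction equal to the one from $x$ to the circumcenter of $\{x,y,y'\}$; this is rational in the defining coordinates and smooth away from the codimension-one collinearity locus. In both cases a perturbation of any one defining site shifts the direction by a comparable quantity, so the direction map has non-degenerate gradient generically.

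Next I would apply the multivariate Mecke--Slivnyak formula for the Poisson process to express $\e[N_\beta]$ as a finite sum of integrals over $\cD^k$ with $k\le 6$, indexed by the combinatorial type of the ray-pair (extreme/extreme, extreme/regular, regular/regular), distinguishing whether the two origins coincide, and each weighted by $n^k$. In each integral I would fix every defining site of the pair except one and argue, using smoothness and non-degeneracy of the direction map, that the remaining position lies in an $O(\beta)$-thin strip of $\cD$ and hence contributes area $O(\beta)$. This yields a contribution of order $n^k\beta$ per type, with the \emph{a priori} largest case being regular/regular with distinct origins, where $k=6$.

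The principal obstacle is trimming one factor of $n$ from this regular/regular contribution to reach the claimed $n^5\beta$. For this I would exploit the structural consequence of Lemma~\ref{lem:invariance} that each site of $\ppp^n$ is the origin of at most $2(n-1)$ rays rather than the $\Theta(n^2)$ potential triples through it: for $\rho^-(x,y,y')$ to actually separate two adjacent sectors of stoppers from $x$, the corresponding portion of the front arc of the disc through $x,y,y'$ must be empty of $\ppp^n$, and this vacancy requirement saves one Poisson factor of $n$ in the Mecke integral against the small-angle event. Summing all types then gives $\e[N_\beta]=O(n^5\beta)$, and Markov's inequality concludes the proof.
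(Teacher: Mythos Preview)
Your overall strategy (Markov on the pair count, case split by ray type, Mecke--Slivnyak) matches the paper's, but the step you flag as ``the principal obstacle'' is exactly where your argument has a gap, and the paper resolves it by a different and much simpler observation.

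For two rays with \emph{distinct} origins $x_1\neq x_2$, the paper never integrates the six-point configuration at all. The point is purely geometric: the ray $\rho_2$ passes through its own origin $x_2$, so if $\rho_2$ meets $\rho_1$ inside $\cD$ at angle less than $\beta$, then $x_2$ itself must lie in a thin double cone of half-angle $\beta$ about the line supporting $\rho_1$, a region of area $O(\beta)$. One then sums over $x_1\in\ppp^n$ and over the at most $2(|\ppp^n|-1)$ rays in $\Xi_{x_1}$ (this is where the bound from Lemma~\ref{lem:invariance} enters, as a deterministic count on the outer sum, not as an emptiness probability inside an integral), obtaining $O(n^3\beta)$ for \emph{all} distinct-origin pairs simultaneously, regardless of whether the rays are extreme or regular.

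The $n^5\beta$ in the statement actually comes from the \emph{same}-origin case $\rho^-(x,y,z)$ versus $\rho^-(x,y',z')$: here both rays pass through $x$, so the trick above is vacuous, and one argues that once $x,y,z$ and one of $y',z'$ are placed, the remaining point is confined to an $O(\beta)$-neighbourhood of a circle determined by the others. Five free points, hence $n^5\beta$.

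Your proposed route---inserting the front-arc emptiness indicator for $\rho^-(x,y,y')$ into the Mecke integrand to recover a factor of $n$---is not carried out, and it is delicate: the emptiness event is correlated with the positions of the very points defining the second ray and with the small-angle constraint, so it is not clear without further work that the saving is exactly one power of $n$ uniformly over the integration domain. The paper's geometric shortcut for the distinct-origin case sidesteps this entirely.
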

\begin{proof}
For two intersecting rays $\rho_1$ and $\rho_2$, let $\langle
\rho_1,\rho_2\rangle$ denote the (smallest) angle they define. We first deal
with the angles between two rays originating from different points. Consider a
given ray $\rho_1\in \Xi_{x_1}$, and let $\tilde x_1$ denote the intersection
of $\rho_1$ with $\partial \cD$. For another ray $\rho_2\in \Xi_{x_2}$, with
$x_2\neq x_1$, to intersect $\rho_1$ at an angle lying in $(-\beta,\beta)$, we
must have $x_2 \in \cone_\beta(x_1,\tilde x_1, \infty) \cup \cone_\beta(\tilde
x_1,x_1,\infty)$. Observe that for any two points $x,y$, the area of
$\cone_\beta(x,y,\infty)\cap \cD$ is bounded by $\delta^2 \tan \beta$, where
$\delta$ denotes the diameter of $\cD$. In particular, for any fixed ray
$\rho_1 \in \Xi_x$ for some $x\in \ppp^n$,
\begin{align*}
    & \p(\exists x_2\in \ppp^n_x\setminus \{x\}, \rho_2 \in \Xi_{x_2}: 
        \langle \rho_1,\rho_2\rangle< \beta) \\
    & \le \p(\exists x_2\in \ppp^n_x\setminus\{x\}: 
        x_2 \in \cone_\beta(x_1,\tilde x_1, \infty)) \\
    & \quad + \p(\exists x_2\in \ppp^n_x\setminus\{x\}: 
        x_2 \in \cone_\beta(\tilde x_1,x_1,\infty))\\
    & \le 2 n \delta^2 \tan \beta,
\end{align*}
since $\ppp_x^n\setminus \{x\}$ is a Poisson point process in $\cD$ with
intensity $n$. Now, since for any $x$, $|\Xi_x|\le 2 (|\ppp^n|-1)$,
\begin{align}\label{eq:angle_inter_diff}
    & \p(\exists x_1,x_2\in \ppp^n, x_1\neq x_2, 
        \rho_1\in \Xi_{x_1}, \rho_2\in \Xi_{x_2}: 
    \langle \rho_1,\rho_2 \rangle<\beta ) \notag\\
    & \le \e\Bigg[ 
    \sum_{x\in \ppp^n} 
    \p(\exists \rho_1 \in \Xi_{x}, x_2\in \ppp^n_x 
        \setminus \{x\}, \rho_2 \in \Xi_{x_2}: 
    \langle \rho_1,\rho_2\rangle< \beta) \Bigg] \notag \\
    & \le \e \Bigg[ \sum_{x\in \ppp^n} 2 (|\ppp^n|-1) 
        \cdot 2 n \delta^2 \tan \beta \Bigg] \notag \\
    & = 4 n^3 \delta^2 \tan \beta,
\end{align}
since $\ppp^n_{x}\setminus \{x\}$ is distributed like $\ppp^n$. 

We now deal with the smallest angle between any two rays in the same set
$\Xi_x$, for some $x\in \ppp^n$. Any point $y\in \ppp^n_x\setminus\{x\}$
defines at most two extreme rays in $\Xi_x$ which intersect at an angle of
$\pi/4$. For any two distinct points $y,y'$ to define two extreme rays that
intersect at an angle smaller than $\beta>0$, then $y'$ must lie in
$\cone_\beta(x,y,\infty)$ or one of its two images in the rotations of angle
$+\pi/8$ or $-\pi/8$ about $x$. The arguments we have used to obtain
\eqref{eq:angle_inter_diff} then imply that
\begin{align*}
    \p(\exists x\in \ppp^n:\exists \rho,\rho'\in \Xi_x^+, 
        \langle \rho, \rho'\rangle < \beta)
    & \le 3 n^3 \delta^2 \tan \beta\\
    & \le 6 n^3 \delta^2 \beta,
\end{align*}
for all $\beta\in [0,\pi/4]$, since then $\tan \beta \le 2\beta$.

We now consider a regular ray $\rho=\rho^-(x,y,z)$. Then there exist $q\in
\bbR^2$ and $r>0$ such that $\rho$ is the half-line with an end at $x$ and
going through $q$, and $x,y,z$ all lie on the same circle $\sC$ (more
precisely, $y,z$ lie on the intersection of $\sC$ with $\cone(x,q,\infty)$). We
first bound the probability that there exists a point $z'$ such that the
regular ray $\rho'=\rho^-(x,y,z')$ intersects $\rho$ at an angle at most
$\beta$, that is $\rho'\in \cone_\beta(x,q,\infty)$. We want to bound the area
of the region
$$\{z'\in \cD: \rho^-(x,y,z') \in \cone_\beta(x,q,\infty)\},$$
for which the angle between $\rho$ and $\rho'$ is at most $\beta$. By
definition of $\rho'$, $x,y$ and $z'$ lie on the same circle $\sC'=\sC'(c)$,
which has a centre $c'$ in $\cone_\beta(x,q,\infty)$. More precisely, the
centre $c'$ lies on the intersection of the bisector of the line segment
$[x,y]$ with $\cone_\beta(x,q,\infty)$; so $c'$ lies in a line segment $\ell$
of length $O(\beta)$ containing the centre $c$ of $\sC$. The Hausdorff distance
between $\sC$ and $\cup_{c'\in \ell} \sC'(c)$ is $O(\beta)$ and it follows that
$$\cA \{z'\in \cD: \rho^-(x,y,z') \in \cone_\beta(x,q,\infty)\} = O(\beta).$$
A similar argument applies to deal with rays $\rho^-(x,y',z')$ for $y',z'\in
\ppp^n$: once the one of $y'$ or $z'$ is chosen, the second is forced to lie in
a region of area $O(\beta)$. From there, arguments similar to the ones we used
in \eqref{eq:angle_inter_diff} yield
\begin{equation}\label{eq:angle_ext-ext}
    \p(\exists x \in \ppp^n: \exists \rho,\rho'\in \Xi_x^-, 
    \langle \rho, \rho' \rangle < \beta) = O(n^5 \beta).
\end{equation}

Finally, we deal with the intersections of a regular and an extreme ray. We
$\rho=\rho^-(x,y,z)$ and $\rho'=\rho^+_1(x,y')$, for $y'\in \ppp^n$ (potentiall
$y'=y$). For $\rho'$ to lie in $\cone_\beta(x,q,\infty)$, the point $y'$ must
lie in a cone with apex $x$, half-angle $\beta$ and with axis one of the
straight boundaries of $\cone(x,q,\infty)$. So if $y\ne y'$, the point $y'$ is
constrained to lie in a region of area $O(\beta)$; in this case, the expected
number of $x,y,z,y'\in \ppp^n$ in such a configuration is $O(n^4 \beta)$. If on
the other hand, $y'=y$ then $y$ must lie on a portion of length $O(\beta)$ of
the front arc of $\sC \cap \cone(x,q,\infty)$. However, conditional on $x,y,z$
indeed forming the ray $\rho=\rho^-(x,y,z)$, the points $y$ is uniformly
distributed on the front arc. In other words, the expected number of triplets
$x,y,z$ in such a configuration is $O(n^3 \beta)$. It follows that
\begin{equation}\label{eq:angle_ext-reg}
    \p(\exists x\in \ppp^n: \exists \rho \in \Xi_x^-, \rho'\in \cS_x^+, 
\langle \rho,\rho' \rangle < \beta) = O(n^4 \beta).
\end{equation}
Putting \eqref{eq:angle_inter_diff}, \eqref{eq:angle_ext-ext} and
\eqref{eq:angle_ext-reg} together completes the proof.
\end{proof}

\subsection{The shortest line segment}
\label{sec:length}

Let $L(\ppp^n)$ denote the smallest line segment in the arrangement of lines
$\Xi(\ppp^n)$. The aim of this section is to prove the following:

\begin{lemma}\label{lem:shortest_segment}
We have, for any $\gamma$ small enough and $n$ large enough,
$$\p(L(\ppp^n)<\gamma ) \le n^9 \gamma^{1/2}.$$
\end{lemma}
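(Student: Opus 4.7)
The plan is to mimic the structure of the proof of Lemma~\ref{lem:smallest_angle}. A line segment of $\Xi(\ppp^n)$ lies on some ray $\rho$ between two consecutive ``events,'' where each event is either the origin of $\rho$, the boundary point $\rho\cap\partial\cD$, or an intersection of $\rho$ with another ray $\rho'\in\Xi(\ppp^n)$. So the event $\{L(\ppp^n)<\gamma\}$ entails the existence of a triple of rays $(\rho,\rho_1,\rho_2)$ (some of them possibly replaced by the origin of $\rho$ or by $\partial\cD$) whose corresponding events on $\rho$ are within distance $\gamma$ of each other. Each ray is either regular (defined by three points of $\ppp^n$) or extreme (defined by two points), so any such triple is determined by at most $3\cdot 3=9$ points of $\ppp^n$. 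Summing over the types of the three rays will contribute the factor $n^9$ in the final bound, exactly as the pair $(\rho,\rho')$ contributed $n^5$ (after some savings coming from shared origin points) in the proof of Lemma~\ref{lem:smallest_angle}.

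For each configuration type, I would condition on $\rho$ and on $\rho_1$ (hence on the intersection point $p_1=\rho\cap\rho_1$) and bound the probability that a ray $\rho_2$ with a prescribed type crosses $\rho$ inside the segment of length $2\gamma$ centred at $p_1$. For a ray $\rho_2$ whose origin $y$ lies at distance $d$ from $p_1$, the directional window that ensures the crossing happens inside this $\gamma$-window has aperture $O(\gamma/d)$. For extreme rays $\rho_1^+(y,w),\rho_2^+(y,w)$ this forces $w$ into a cone of area $O(\delta^2\gamma/d)$; for regular rays $\rho^-(y,w,w')$ one uses the Jacobian of the map $(w,w')\mapsto\text{direction}$ along the circle through $y,w,w'$, which, restricted to a fixed $w$, gives a linear (in $\gamma$) region for $w'$ after a change of variables (the centre of the circle sweeps the perpendicular bisector of $[y,w]$ and the direction of $\rho$ is the line through $y$ and this centre). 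To turn these per-origin bounds into an overall probability, I split the origin $y$ into two regimes: when $\|yp_1\|\le \sqrt\gamma$ one uses only the Poisson bound for the number of such origins (giving area $O(\gamma)$ and count $O(n\gamma)$, which is already $o(\gamma^{1/2})$), and when $\|yp_1\|>\sqrt\gamma$ one bounds the total expected mass by $\int_{\sqrt\gamma}^{\delta}(n\cdot d)\cdot O(\gamma/d)\,dd=O(n\gamma^{1/2})$ per extra point; this balance at the threshold $d\sim\sqrt\gamma$ is precisely what produces the $\gamma^{1/2}$ factor and explains why the exponent on $\gamma$ is not $1$.

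Combining Steps one and two, for any fixed configuration type the probability that the triple witnesses a segment of length below $\gamma$ is $O(n^{a}\gamma^{1/2})$ where $a$ is the remaining number of ``free'' points after conditioning, and summing over the finitely many types and the at most $9$-point configurations gives the desired $O(n^9\gamma^{1/2})$ bound via the union bound and Slivnyak--Mecke-style reductions exactly as in \eqref{eq:angle_inter_diff} and \eqref{eq:angle_ext-reg}. The main obstacle in executing this plan is the Jacobian computation for regular rays, because the direction of $\rho^-(y,w,w')$ depends non-linearly on $w$ and $w'$ through the circumcentre, and one must verify that the map has non-degenerate derivatives away from a lower-dimensional set so that the angular window of width $\gamma/d$ pulls back to a region of area $O(\gamma/d)$ in the space of $(w,w')$. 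A secondary difficulty is the boundary case, where the second ``event'' is an exit point on $\partial\cD$ rather than a ray crossing; this is handled analogously, but with the length along $\rho$ to the boundary replacing $d$, and saves one factor of $n$ so does not worsen the bound.
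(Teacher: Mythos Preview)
Your high-level strategy matches the paper's: place a small ball at every ``event'' on a ray (origin, boundary exit, ray--ray intersection) and bound the probability that some further ray of $\Xi(\ppp^n)$ meets any such ball. The paper carries this out via two auxiliary lemmas (Lemmas~\ref{lem:loc_points_ray-ext} and~\ref{lem:loc_points_ray-reg}) which show that, given a ball $\delta(z,\gamma)$ and all but one of the points defining a ray, the remaining free point is constrained to a region of area $O(\gamma)$, \emph{uniformly} in the other data. This handles every configuration in which the third ray has at least one defining point not already among those of $\rho$ and $\rho_1$, and gives the better exponent $O(\gamma)$ there. Your ``Jacobian for regular rays'' worry is exactly what these lemmas dispose of, and it is not the bottleneck.

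The gap in your plan is the source of the exponent $1/2$. Your integral $\int_{\sqrt\gamma}^{\delta}(n\,d)\cdot O(\gamma/d)\,dd$ evaluates to $O(n\gamma)$, not $O(n\gamma^{1/2})$; the near/far split on the origin distance does not produce a square root. In the paper the $\gamma^{1/2}$ arises only in the degenerate case where the third ray is defined \emph{entirely} by points already used for $\rho_1$ and $\rho_2$ --- for instance $\rho_3=\rho^+(y_1,y_1')$ with $y_1,y_1'$ the two arc points of the regular ray $\rho_1=\rho^-(x_1,y_1,y_1')$. There is then no free point to integrate out, and one must instead trade off two constraints that are already present: the conditional distribution of $y_1$ on the front arc puts it within distance $r$ of $z$ with probability $O(r)$, while for $\rho^+(y_1,y_1')$ to hit $\delta(z,\gamma)$ the point $y_1'$ must lie in a cone of aperture $\arcsin(\gamma/\|y_1z\|)$, i.e.\ area $O(\gamma/r)$. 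Optimising $r$ gives $O(\gamma^{1/2})$. Your proposal never isolates these ``no free point'' configurations, and it is precisely the enumeration and treatment of all such shared-point cases (the list (a)--(h) in the paper) that constitutes the real work; your generic conditioning argument does not apply to them.
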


In order to prove Lemma~\ref{lem:shortest_segment}, we place a ball of radius
$\gamma$ at every intersection in $\Xi(\ppp^n)$ (including the ones which
involve the boundary $\partial \cD$) and estimate the probability that any such
ball is hit by another ray of the arrangement. The proof is longer and slightly
more delicate than that of Lemma~\ref{lem:smallest_angle} simply because three
rays, each involving up to three points of $\ppp^n$, might be involved the
event of interest. The first step towards proving
Lemma~\ref{lem:shortest_segment} is to establish uniform bounds on the area of
location of some points for the rays to intersect a given ball. These bounds
are stated in Lemmas~\ref{lem:loc_points_ray-ext}
and~\ref{lem:loc_points_ray-reg} below.

\begin{lemma}\label{lem:loc_points_ray-ext}
There exists a constant $C$ such that, for any $x,y,z\in \cD$, one has,
for all $\gamma>0$ small enough, we have
\begin{compactenum}[(i)]
    \item $\cA \{\bar x\in \cD: \rho^+(\bar x,y)
          \cap \delta(z,\gamma) \ne \varnothing\} \le C \gamma$
    \item $\cA \{\bar y\in \cD: \rho^+(x,\bar y) 
          \cap \delta(z,\gamma) \ne \varnothing\} \le C \gamma/ \|xz\|.$
\end{compactenum}
\end{lemma}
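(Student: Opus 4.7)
The plan for both parts is to view each region of interest as a tubular neighbourhood of a one-dimensional locus and bound its area by (thickness) $\times$ (length of locus).

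For part (ii), the plan is to fix $x$ and $z$ and note that $\rho^+(x,\bar{y})$ depends on $\bar{y}$ only through the direction $\theta:=\arg(\bar{y}-x)$, since it is a ray issuing from $x$ with direction $\theta\pm\pi/8$. Hence $\rho^+(x,\bar{y})$ meets $\delta(z,\gamma)$ iff $\theta$ lies within $\arcsin(\gamma/\|xz\|)\le \pi\gamma/(2\|xz\|)$ of $\arg(z-x)\mp\pi/8$, confining $\bar{y}$ to two sectors of apex $x$ with half-angle $O(\gamma/\|xz\|)$. The intersection of such a sector with $\cD$ has area at most $\delta_{\cD}^{\,2}\cdot O(\gamma/\|xz\|)$, where $\delta_{\cD}$ denotes the diameter of $\cD$, which yields (ii).

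For part (i), I would fix $y$ and $z$ and, for $\bar{x}\in\cD$, let $f(\bar{x})$ denote the perpendicular distance from $z$ to the line supporting $\rho^+(\bar{x},y)$. Outside $O(\gamma)$-discs about $y$ and $z$ (together of area $O(\gamma^2)$, which are absorbed in the final bound), the event $\rho^+(\bar{x},y)\cap\delta(z,\gamma)\ne\varnothing$ forces $f(\bar{x})\le\gamma$. The core step is to show $\|\nabla f\|$ is bounded below by a universal positive constant. A direct first-order calculation---placing $\bar{x}$ at the origin with $y$ on the positive $x$-axis, and writing $d=\|\bar{x}z\|$, $D=\|\bar{x}y\|$---will give
\[ f\bigl(\bar{x}+(u,v)\bigr)=\bigl| u\sin(\pi/8)+v\bigl(d/D-\cos(\pi/8)\bigr) \bigr|+O(u^2+v^2),\]
whence $\|\nabla f\|\ge \sin(\pi/8)$. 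The co-area inequality will then yield
\[ \area\bigl(\{\bar{x}\in\cD:f(\bar{x})\le\gamma\}\bigr)\le \frac{\gamma}{\sin(\pi/8)}\,\mathcal{H}^1\bigl(\{f=0\}\cap\cD\bigr).\]
By the inscribed angle theorem, $\{f=0\}=\{\bar{x}:\angle y\bar{x}z=\pi/8\}$ is a circular arc through $y$ and $z$ of radius $\|yz\|/(2\sin(\pi/8))$, so its length in $\cD$ is $O(\delta_{\cD})$, and the desired $O(\gamma)$ bound follows.

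The chief technical obstacle will be the gradient computation in part (i); its conclusion hinges on the cone angle $\pi/8$ being a strict acute angle, so that the axial-direction contribution $\sin(\pi/8)$ is a universal positive constant which cannot be cancelled by the transversal-derivative term $d/D-\cos(\pi/8)$. The degenerate pieces near $y$ and $z$ are handled trivially by the bound on the area of a disc of radius $O(\gamma)$.
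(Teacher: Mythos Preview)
Your argument for (ii) is essentially identical to the paper's: both identify the admissible $\bar y$ as lying in a pair of thin angular sectors at $x$ of half-angle $\arcsin(\gamma/\|xz\|)$.

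For (i) you take a genuinely different route. The paper argues \emph{dually}: it fixes the exact locus $\{\bar x:\rho^+(\bar x,y)\ni z\}$ as (part of) the boundary of a union of two explicit discs through $y$ and $z$, then lets $z$ range over $\delta(z,\gamma)$ and observes that the centres and radii of these discs shift by $O(\gamma)$, so the union of loci is an $O(\gamma)$-fattening of a curve of length $O(\delta_\cD)$. Your approach instead fixes $z$ and studies the function $f(\bar x)=\operatorname{dist}(z,\text{line})$ directly, showing $\|\nabla f\|\ge\sin(\pi/8)$ uniformly and concluding that $\{f\le\gamma\}$ lies in a tube of width $\gamma/\sin(\pi/8)$ about the zero set. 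Your gradient computation is correct and the uniform lower bound is the heart of the matter; it is a clean quantitative explanation of why the cone angle being strictly positive is what makes the lemma work. Two small points worth tightening: the inequality you call ``co-area'' should really be phrased as the tubular-neighbourhood inclusion (co-area gives $\int_0^\gamma \mathcal H^1(\{f=t\})\,dt$, not $\gamma\cdot\mathcal H^1(\{f=0\})$, though the gradient-flow argument you have in mind delivers the tube inclusion directly); and $\{f=0\}$ is in fact the full circle through $y,z$ of the stated radius (the opposite arc corresponds to $\angle y\bar x z=\pi-\pi/8$), not just one arc, but this only changes the length bound by a factor of two. The paper's Hausdorff-distance argument is slightly quicker to state; yours is more analytic and yields an explicit constant.
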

\begin{proof}
Fix $x$, $z$ and $\gamma>0$. The location of points $\bar y$ for which
$\rho^+(x,\bar y)\ni z$ is precisely $\partial \cone(x,z,\infty)$. Then,
$$\bigcup_{q\in \delta(z,\gamma)} \partial \cone(x,q,\infty)$$ consists of two
cones of half-angle $\arcsin(\gamma/\|xz\|)$, which proves the second claim.

For the first claim, let $c_1=c_1(z)$ and $c_2=c_2(z)$ be the two intersections
of the circles of radius $\|zy\|/\sqrt 2$ centred at $y$ and $z$ (so the
segment $zy$ is seen from $c_1$ and $c_2$ at an angle of $\pi/2$). Let
$\sC_1=\sC_1(z)$ and $\sC_2=\sC_2(z)$ denote the discs centred at $c_1$ and
$c_2$, respectively, and having $y$ and $z$ on their boundaries. Then, the
region of the points $\bar x$ for which $\rho^+(\bar x,y)\ni z$ is precisely
the boundary of $\sC_1 \cup \sC_2$. When looking for the region of points $\bar
x$ satisfying $\rho^+(\bar x,y) \cap \delta(z,\gamma)\ni q$, for some
$q\in \delta(z,\gamma)$ observe that the centres $\|c_1(q)-c_1(z)\|=O(\gamma)$, 
$\|c_2(q)-c_2(z)\|=O(\gamma)$ and that the Hausdorff distance between 
$\sC_1(q)\cup \sC_2(q)$ and $\sC_1\cup \sC_2$ is $O(\gamma)$, hence the first claim.
\end{proof}

\begin{lemma}\label{lem:loc_points_ray-reg}
There exists a constant $C$ such that, for any $x,y,y',z \in \cD$, 
one has, for all $\gamma>0$ small enough
\begin{compactenum}[(i)]
    \item $\cA \{\bar x\in \cD: \rho^-(\bar x,y,y') \cap \delta(z,\gamma) 
    \ne \varnothing\} \le C \gamma $
    \item $\cA \{\bar y\in \cD: \rho^-(x,\bar y, y') \cap \delta(z,\gamma) 
          \ne \varnothing\} \le C \gamma.$
\end{compactenum}
\end{lemma}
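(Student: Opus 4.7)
Following the strategy of Lemma~\ref{lem:loc_points_ray-ext}, the plan is to identify the one-dimensional ``exact-hit'' locus of $\bar x$ (resp.\ $\bar y$) for which the ray $\rho^-(\cdot,\cdot,\cdot)$ passes through $z$, and to bound the area of its $\gamma$-thickening in $\cD$. The common observation is that $\rho^-(\cdot,\cdot,\cdot)$ is always the half-line from its first argument through the circumcentre $c$ of its three arguments, and that $c$ lies on the perpendicular bisector $\ell$ of the segment joining the two remaining arguments.

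For part (ii), the apex $x$ is fixed, so the ray is entirely determined by the location of $c$ on $\ell'$, the perpendicular bisector of $[x,y']$. The exact-hit condition pins $c$ to the single intersection $c_0 := \ell' \cap [x, z)$, and the preimage of $c_0$ in $\bar y$-space is the whole circle $\sC(c_0)$ through $x$ and $y'$, a $1$-dim curve of length at most $2\pi\,\mathrm{diam}(\cD)$. As $z$ varies in $\delta(z,\gamma)$, $c$ sweeps a short sub-segment of $\ell'$ and the corresponding $\bar y$'s fill an annular region. I would parametrise this region by $(c,\theta) \mapsto c + r(c)(\cos\theta,\sin\theta)$ with $r(c) = \|c - x\|$ and compute directly that the Jacobian of this map equals $|\hat v \cdot (\bar y - x)|$ (where $\hat v$ is a unit vector along $\ell'$), hence is bounded by $\mathrm{diam}(\cD)$. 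Integrating this Jacobian over the admissible $(c,\theta)$ region --- and using that the arc $\sC(c)\cap\cD$ has length $O(\mathrm{diam}(\cD))$ when $r(c)$ is large, which compensates any geometric blow-up in the length of the $c$-segment --- yields the claimed $O(\gamma)$ bound.

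Part (i) is handled with the analogous parametrisation $\bar x = c + r(c)(\cos\theta,\sin\theta)$, $c\in\ell$, $r(c)=\|c-y\|$, for which the same Jacobian bound $|J|\le\mathrm{diam}(\cD)$ holds. For each $c$, the condition that $\rho^-(\bar x,y,y')$ meets $\delta(z,\gamma)$ restricts $\theta$ to lie within angular distance $\arcsin(\gamma/\|z-c\|)$ of either direction $(c-z)/\|c-z\|$ or its opposite --- the two sign choices correspond to $z$ lying beyond $c$ on the ray or between $\bar x$ and $c$. I split the $c$-integral into two regimes: for $\|z-c\|>\gamma$ the admissible $\theta$-range has angular measure $O(\gamma/\|z-c\|)$; for $\|z-c\|\le\gamma$ the $c$-range itself has length at most $O(\gamma)$ while $\theta$ may be arbitrary. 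Both regimes contribute $O(\gamma)$.

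The main obstacle is the degenerate configuration of part (i) where $z$ lies on (or very near) $\ell$, since $\|z-c\|$ can then vanish along $\ell$ and the bulk integral $\int \gamma/\|z-c\|\,dc$ naively picks up a logarithmic factor. This is resolved by the direct observation that when $z\in\ell$ the exact-hit locus is contained in $(\ell\cap\cD)\cup\sC(z)$ --- indeed, if $c(\bar x), z\in\ell$ are distinct, their span is $\ell$ itself, forcing $\bar x\in\ell$ too, while if $c(\bar x)=z$ then $\bar x$ may lie anywhere on $\sC(z)$ --- and the $\gamma$-neighbourhood of this $1$-dim set has area at most $2\gamma\,\mathrm{diam}(\cD)(1+\pi)$. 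A continuity argument extends this clean estimate to a fixed neighbourhood of the degenerate case, absorbing the spurious log factor into the universal constant $C$.
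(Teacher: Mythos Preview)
Your approach is essentially the paper's: both parts are handled by parametrising the admissible $\bar x$ (resp.\ $\bar y$) via the circumcentre $c$ moving along the relevant perpendicular bisector, and part~(i) is split into the same two regimes $\|zc\|>O(\gamma)$ and $\|zc\|\le O(\gamma)$. You are more explicit than the paper (computing the Jacobian rather than asserting ``arc of length $O(\gamma)$'' and ``cumulated area $O(\gamma)$''), and you correctly flag the degenerate configuration $z\in\ell$, which the paper does not isolate; your continuity argument there is no more hand-wavy than the paper's own treatment of that case.
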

\begin{proof}
The ray $\rho^-(x,y,y')$ is the half-line with one end at $x$ and going through
the centre $c$ of the circle which contains all three $x,y$ and $y'$. So the
region of the points $\bar x$ for which $\rho^-(\bar x, y,y')\ni z$ is
naturally indexed by the points of the bisector $\Delta$ of the segment
$[y,y']$: for a point $c\in \Delta$, the only possible points $\bar x=\bar
x(c)$ lie the intersection of the circle $\sC$ centred at $c$ and containing
$y$ (and $y'$) and the line containing $c$ and $z$. So if $z\ne c$, there are
two such points and at most one of them is such that $y,y'\in \cone(\bar x,
c,\infty)$. If $z=c$, then there is potentially a continuum of points $\bar
x(c)$, consisting of the arc of $\sC$ for which $y,y'\in \cone(\bar
x,c,\infty)$. It follows that for every $c\in \Delta$ such that
$\|zc\|>2\gamma$, the points $\bar x(c)$ for which $\rho^-(\bar x,y,y')$
intersects $\delta(z,\gamma)$ is an arc of length $O(\gamma)$. On the other
hand, the cumulated area of the circles $\sC(c)$ for which $\|zc\|\le 2\gamma$
is $O(\gamma)$. Hence the first claim.

For $x$, $y$ and $q$ fixed, the set of points $\bar y$ such that $\rho^-(x,y,
\bar y)\ni q$ is contained in the circle $\sC'(q)$ whose centre is the
intersection of the bisector of $[x,y]$ and the line $(xq)$, and which contains
$x$ (and $y$). The cumulated area of $\sC'(q)$ when $q\in \delta(z,\gamma)$ is
$O(\gamma)$, which proves the second claim.
\end{proof}

\begin{proof}[Proof of Lemma~\ref{lem:shortest_segment}]
Recall that, in order to lower bound the length of the shortest line segment
in $\Xi(\ppp^n)$, we prove using Lemmas~\ref{lem:loc_points_ray-ext}
and~\ref{lem:loc_points_ray-reg}, that if we place a ball of radius $\gamma$ at
every intersection in $\Xi(\ppp^n)$, every ball contains a single intersection
with probability at least $1-O(n^9 \gamma^{1/2})$. The intersections to
consider are of three different types: the points $x\in \ppp^n$, the
intersections of one ray and the boundary of the domain $\cD$, and
intersections between two rays.

The case of balls centred at points of $\ppp^n$ first is easily treated. We
have
\begin{align*}
    \p(\exists z,x\in \ppp^n: z\neq x, \Xi_x\cap \delta(z,\gamma) 
    \neq \varnothing )
    & \le \e \left[
    \sum_{z\in \ppp^n} \I{\exists x\in \ppp^n \setminus\{z\}: 
    \Xi_x \cap \delta(z,\gamma)\ne \varnothing}\right].
\end{align*}
Note that, in the expected value on the right, although $x\ne z$, the set
$\Xi_x$ does depend on $z$ in a non-trivial way. The main point about the proof
is to check that the conditioning on $z\in \ppp^n$ does not make it much easier
for a ray to intersect $\delta(z,\gamma)$; and this is where
Lemmas~\ref{lem:loc_points_ray-ext} and~\ref{lem:loc_points_ray-reg} enter the
game. We have
\begin{align*}
    \e \left[
    \sum_{z\in \ppp^n} \I{\exists x\in \ppp^n \setminus\{z\}: \Xi_x \cap 
    \delta(z,\gamma)\ne \varnothing}\right]
    & \le \e \left[ 
    \sum_{z\in \ppp^n} \sum_{y\in \ppp^n} \sum_{x\in \ppp^n\setminus \{z,y\}} 
    \I{\rho^+(x,y)\cap \delta(z,\gamma)\ne \varnothing}
    \right] \\
    & + \e \left[ 
    \sum_{z\in \ppp^n} \sum_{y,y'\in \ppp^n} 
    \sum_{x\in \ppp^n\setminus \{z,y,y'\}} 
    \I{\rho^-(x,y,y')\cap \delta(z,\gamma)\ne \varnothing}
    \right]\\
    & \le C n^4 \gamma,
\end{align*}
by Lemmas~\ref{lem:loc_points_ray-ext} and~\ref{lem:loc_points_ray-reg}, since
conditional on $z,y,y'\in \ppp^n$, $\ppp^n\setminus \{x,y,y'\}$ is distributed
like $\ppp^n$.

We now move on to the intersections of rays with the boundary of $\cD$. Let
$\rho$ be a ray, and let $z$ denote its intersection with $\partial \cD$. If
$\rho$ is an extreme ray, then there exists $x,y\in \ppp^n$ such that
$\rho=\rho^+_1(x,y)$ (or $\rho=\rho^+_2(x,y)$) and we are interested in the
probability that some other ray $\rho'$ intersects $\delta(z,\gamma)$; note
that the ray $\rho'$ might arise from a set of points $x',y'$ (if it is
extreme) or $x',y',y''$ (if it is regular) which uses some of $x$ or $y$. If
there is at least one point of $x',y'$ that is not in $\{x,y\}$ then
Lemma~\ref{lem:loc_points_ray-ext} allows us to bound the probability that
$\rho^+(x',y')$ intersects $\delta(z,\gamma)$; in the other case, there must be
one of $x',y',y''$ which is not in $\{x,y\}$ and
Lemma~\ref{lem:loc_points_ray-reg} applies: For each such choice of points, the
probability that $\rho'$ intersects $\delta(z,\gamma)$ is $O(\gamma)$,
uniformly in $x$, $y$ and $z$. So the only case remaining to check is when
$\rho'$ is only defined by the two points $x,y$. Here,
\begin{itemize}
    \item either $\rho'\in \Xi_x$ and for $\rho'$ to intersect
    $\delta(z,\gamma)$ one must have $x\in \delta(z,\gamma)$, which only
    happens if $x$ lies within distance $\gamma$ of $\partial \cD$, hence with
    probability at most $O(n\gamma)$; 
    \item or $\rho'\in \Xi_y$ and the ball
    $\delta(z,\gamma)$ should be close to one of the two points in $\partial
    \cone(x,y,\infty)\cap \partial \cone(y,x,\infty)$; Write $i_1=i_1(x,y)$ and
    $i_2=i_2(x,y)$ for these two points. More precisely, since the angle
    between $\rho$ and $\rho'$ is $\pi/4$, the point $z$ should lie within
    distance $\gamma \sqrt 2$ of one of the two intersections (or
    $\delta(z,\gamma)$ does not intersect $\rho'$). Dually, for a point $q$,
    let $\bar y=\bar y(q)$ be the point such that $i_1(x,\bar y)=q$. Then, for
    $\|zq\|\le \gamma'=\gamma \sqrt 2$ the angle between the vectors $x\bar
    y(q)$ and $x\bar y(z)$ is at most $\arcsin(\gamma'/\|xz\|)$ and the
    difference in length is an additive term of at most $\gamma' \sqrt 2$.
    Overall, $\{\bar y: i_1(x,\bar y) \in \delta(z,\gamma')\}$ is contained in
    a ball of radius at most $3\gamma' \sqrt 2=6\gamma$. Finally, since $z\in
    \partial \cD$, the probability that such a situation occurs for some
    $x,y\in \ppp^n$ is at most $O(n^2 \gamma)$.
\end{itemize}
As a consequence, the probability that there exist two rays $\rho$ and $\rho'$,
such that $\rho'\cap \delta(z,\gamma)\ne \varnothing$, for $z=\rho\cap \cD$ is
$O(n^5 \gamma)$.

It now remains to deal with the case of balls centred at the intersection of
two rays. Consider two rays, $\rho_1\in \Xi_{x_1}$ and $\rho_2\in \Xi_{x_2}$,
for $x_1\ne x_2$, which are supposed to intersect at a point $z$ (the case
$x_1=x_2$ has already been covered since then, we have $z\in \ppp^n$). The
definition of $\rho_1$ and $\rho_2$ may involve up to six points $x_1,y_1,y_1'$
and $x_2,y_2,y_2'$ of $\ppp^n$. The probability that some third ray $\rho_3$,
whose definition uses at least one new point of $\ppp^n$, intersects
$\delta(z,\gamma)$ can be upper bounded using
Lemmas~\ref{lem:loc_points_ray-ext} and~\ref{lem:loc_points_ray-reg} as before
and we omit the details. The only new cases we need to cover are the ones when
the definition of $\rho_3$ only involves points among $x_1,y_1,y_1'$ and
$x_2,y_2,y_2'$. We now show that all the possible configurations on six points
only are unlikely to occur for $\gamma$ small in the random point set $\ppp^n$.

Because of the number of cases to be treated, without a clear way to develop a
big picture, we only sketch the remainder of the proof. We treat the cases
where $\rho_1=\rho^-(x_1,y_1,y_1')$ and $\rho_2=\rho^-(x_2, y_2, y_2')$, so in
particular the two rays which intersect at $z$ are regular rays.
\begin{itemize}
    \item If $\rho_3\in \Xi_{x_1}$ (or, by symmetry, $\rho_3\in \Xi_{x_2}$),
    the bound on the smallest angle between any two rays in
    Lemma~\ref{lem:smallest_angle} ensures that $z$ ought to be close to $x_1$:
    one must have $\arcsin(\gamma/\|x_1z\|)\le \eta$ where $\eta$ denotes the
    smallest angle. Then, either $x_2$ is also close to $z$, or $y_2$ and
    $y_2'$ actually almost lie on a circle whose centre lies on the line
    $(x_1,x_2)$. Both are unlikely: (1) the closest pair of points lie at
    distance $\Omega(1/n)$ (which is much larger than what we are aiming for)
    and (2) for fixed $x_1,x_2$ and $y_2$, the location of points $y_2'$ such
    that $y_2$ lies near the circle whose centre lies on $(x_1,x_2)$ and which
    contains $y_2$ has area of order $O(\gamma)$.
    \item If $\rho_3\in \Xi_{y_1}$ (or the other symmetric cases), there are
    the following possibilities: either $\rho_3$ is a regular ray and (up to
    symmetry)
    $$
        \rho_3
        \in 
        \left\{
        \begin{array}{l l l l}
        \text{(a)}& \rho^-(y_1,x_1,y_1'), & \text{(b)}& \rho^-(y_1,x_1,y_2)\\
        \text{(c)}&\rho^-(y_1, x_2, y_1'), & \text{(d)}&\rho^-(y_1,x_2, y_2)\\
        \text{(e)}&\rho^-(y_1,y_1',y_2'), & \text{(f)}&\rho^-(y_1,y_2,y_2')
        \end{array}
        \right\},
    $$ 
    or $\rho_3$ is an extreme ray and 
    $$
        \rho_3 \in 
        \big\{
        \begin{array}{l l l l}
        \text{(g)}& \rho^+(y_1,y_1'), & \text{(h)} & \rho^+(y_1,y_2)
        \end{array}
        \big\}.
    $$
    Since $\rho^-(x_1,y_1,y_1')$ is a ray, $y_1,y_1'$ both lie in a cone of
    half-angle $\pi/8$, and it is impossible that we also have $x_1,y_1'$ both
    lying in some cone of half-angle $\pi/8$; so configuration (a) does not
    occur. For situation (b), we use Lemma~\ref{lem:loc_points_ray-reg} (ii).
    For situations (c)--(f), we use the arguments in the proof of
    Lemma~\ref{lem:loc_points_ray-reg} to exhibit the constraints about the
    location of the third point defining $\rho_3$, once the first two are
    chosen.

    To deal with the cases where $\rho_3\in \Xi_{y_1}^+$, observe that once
    $y_1$ is placed, $y_1'$ (or $y_2$) must lie in a cone of angle
    $\arcsin(\gamma/\|y_1z\|)$: so either $\|y_1z\|\ge r$, and the cone has
    area $O(\gamma/r)$ or $\|y_1z\|\le r$, but then $y_1$ must lie in the
    portion of the front arc of length $O(r)$ about $\rho_1$. Since the
    conditional distribution of $y_1$ is uniform on the arc, the probability
    that this configuration occurs is $O(\inf_{r\ge 0}\{\gamma/r +
    r\})=O(\gamma^{1/2})$.
\end{itemize}
The remaining cases, where at least one of $\rho_1$ and $\rho_2$ is an extreme
ray, may all be treated using similar arguments and we omit the tedious details.
\end{proof}

\subsection{
    Lower bounding the area of the smallest cell: 
    Proof of Proposition~\ref{pro:area_smallest_cell}
} 
\label{sec:proof_area}
With Lemmas~\ref{lem:smallest_angle} and~\ref{lem:shortest_segment} under our
belt, we are now ready to prove Proposition~\ref{pro:area_smallest_cell}.

We use the line arrangement $\Xi(\ppp^n)$ of Section~\ref{sec:geometry}. The
faces of the corresponding subdivision are convex regions delimited by the line
segments between intersections of rays, and pieces of the boundary of the
domain $\cD$. Let $L(\ppp^n)$ and $\langle \ppp^n \rangle$ denote the length of
the shortest line segment and the smallest angle in the arrangement arising from
$\ppp^n$. Consider a face of the subdivision, and one of its vertices $u$ which
does not lie on the $\partial \cD$. Then the triangle formed by the $u$ and its
two adjacent vertices on the boundary of the face is contained in the face. The
lengths of the two line segments adjacent to $u$ are at least $L(\ppp^n)$ long,
and the angle they make lies between $\langle \ppp^n\rangle$ and $\pi-\langle
\ppp^n\rangle$, so that the area of the corresponding triangle is at least
$$L(\ppp^n)^2 \cdot \frac{\tan \langle \ppp^n\rangle} 2.$$ It follows easily
from Lemmas~\ref{lem:smallest_angle} and~\ref{lem:shortest_segment} that the
area of the smallest face it at least $\beta \gamma/2$ with probability at
least $1-O(n^4 \beta) - O(n^9 \gamma^{1/2})$. Choosing $\beta,\gamma$ such that
$n^4\beta = n^9 \gamma^{1/2}$ yields the claim.

\section{Comparison with Simulations}\label{sec:simulations}
We implemented \proc{Cone-Walk} in \texttt{C++} using the CGAL libraries.
For simulation purposes, we generated $10^7$ points uniformly at random in a
disc of area $10^7$. We then simulated $\proc{Cone-Walk}$ on $10^6$ different
walks starting from a point within a disc having a quarter of the radius of the
outer disc (to help reduce border effects) with a uniformly random destination.
For comparison, we give give the expected bounds for a walk whose destination
is at infinity. See Table~\ref{table:comparisons}. With respect to the path, we
give two values for the number of extra vertices visited in a step. The first
is the number obtained by using $\proc{Simple-Path}$ and the second, in
brackets, is the average number of `intermediary vertices' within each disc.

\begin{table}[h!] 
    
   
 \begin{center}
    \small
    \setlength{\tabcolsep}{2pt}
    \renewcommand{\arraystretch}{1.4}   
    \begin{tabular}{| l |  c | c | c |}
            \hline          
            & \textbf{Theory}
            & \textbf{Theory (5 s.f.)}
            & \textbf{Simulation}
        \\ \hline
            Radius
            & $\e[R]=\sqrt{\frac{\pi}{2\sqrt{2} + \pi}}$
            & 0.72542
            & 0.72557
        \\
            \# Intermediary path steps
            & $\leq \e[\tau_i] \leq \frac{4 \pi}{\pi + 2\sqrt{2}}$
            & $\leq$ 1.1049
            & 0.41244 (1.09574)
        \\
            $\proc{Simple-Path}$ Length
            & $
                \leq \e[\Lambda/L_0]\leq
                \frac{22 \pi - 4 \sqrt 2}{2+3\pi+8\sqrt 2}
              $
            & $\le 2.7907$
            & 1.51877
        \\
        \hline
    \end{tabular}
  \end{center}

    \caption{Comparison of theory with simulations. Inequalities are used
             to show when values are bounds. \label{table:comparisons}}
\end{table}

\section*{Appendices}
\appendix

\section{Maximum degree for Poisson Delaunay in a smooth convex}
\label{appendix:max_degree}
Let $\mathbf{\Gamma}$ be a homogeneous Poisson process on the entire Euclidean
plane with intensity 1. In this case,
\citet{BernEY91} give a proof that the expected maximum degree of any vertex of
the Delaunay triangulation $\DT(\mathbf{\Gamma})$ falling within the box
$[0,\sqrt{n}\,]^2$ is $\Theta\left(\frac{\log n}{\log \log n}\right)$. Whilst
such a bound may be useful in the analysis of geometric algorithms, it has a
shortcoming in that it implicitly avoids dealing with the border effects that
occur when considering points distributed within bounded regions. When
considering such bounded regions, it can be observed that the degree
distribution is significantly skewed near the border, with the majority of the
vertices on or near the convex hull having a much higher degree than the global
average. It is therefore not altogether trivial that the maximum degree should
still be bounded polylogarithmicly when the sites are generated by a homogeneous
Poisson process in a bounded region. In this section, we show that this indeed
the case for the specific case of a smooth compact convex. As far as we are
aware, this is the first such bound that has been given for a set of random
points in a bounded region. Let $\cD$ be a smooth compact convex subset of
$\reals^2$ with area $\lambda_2(\cD) = 1$, and $\cD_n=\sqrt{n}\cD$ (with area
$n$ and  diameter$(\cD_n)\leq c_2\sqrt{n}$, for some constant $c_2$). Let $\ppp$
be a homogeneous Poisson point process of intensity 1 contained within $\cD_n$,
so that in expectation we have $\e\,|\ppp|=n$. We use $\delta_\ppp(x)$ to denote
the degree of $x\in\ppp$ in $\DT(\ppp)$, and take $\Delta_\ppp := {\displaystyle
\max_{x\in\ppp}\;\delta_\ppp(x)}$.
\begin{proposition} \label{prop:max_degree}      
    For any $\xi > 0$, we have for $n$ sufficiently large,
    $$
        \p \bigg( \Delta_\ppp \geq \log^{2+\xi} n \bigg) 
        \leq
        \expo{-\log^{1+\xi/4} n}.
    $$
\end{proposition}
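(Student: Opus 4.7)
The plan is to combine Lemma~\ref{lem:largest_empty_disc}, which controls the size of the largest empty ball, with a Poisson concentration estimate, following the same broad strategy as the proof of Lemma~\ref{lem:tau_i_is_small}. The degree of $x \in \ppp$ in $\DT(\ppp)$ is bounded by the number of points of $\ppp \setminus \{x\}$ lying within a ball whose radius is twice the longest Delaunay edge incident to $x$, and that longest edge is in turn bounded via the largest empty Delaunay circumscribing disc.

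First I would apply Lemma~\ref{lem:largest_empty_disc} with $\omega_n = \log n$ and with its parameter $\xi$ replaced by $\xi/4$. On the resulting event $A$ of probability at least $1 - \exp(-(\log n)^{1+\xi/4})$, every ball centred in $\cD_n$ and empty of $\ppp$ has radius at most $r_0 := c\,(\log n)^{1/2 + \xi/4}$. For a Delaunay edge $xy$, at least one of its two adjacent Delaunay triangles has an empty circumscribing disc passing through $x$ and $y$; when its centre lies in $\cD_n$, its radius is at most $r_0$, yielding $|xy| \le 2 r_0$. Near $\partial\cD_n$ the circumcentre may lie outside $\cD_n$: projecting it to its closest point in $\cD_n$ (possible by convexity), the corresponding translated ball is still empty, and since the radius of curvature of $\partial \cD_n$ is of order $\sqrt n \gg r_0$ for $n$ large one still obtains $|xy| = O(r_0)$ uniformly, with an implicit constant depending only on the shape of $\cD$. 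Consequently, on $A$ every Delaunay neighbour of $x \in \ppp$ belongs to $b(x, C r_0)$ for some constant $C = C(\cD)$, and thus
\begin{equation*}
    \Delta_\ppp \le \max_{x \in \ppp} \bigl| \ppp \cap b(x, C r_0) \bigr|.
\end{equation*}

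To bound this maximum uniformly I would partition $\cD_n$ into axis-aligned cells of side $2 r_0$. Any ball of radius $C r_0$ is contained in an $O(1)$-block of neighbouring cells, so it suffices to control the maximum number of $\ppp$-points in a single cell. Each cell has area $O(r_0^2) = O((\log n)^{1+\xi/2})$, and its content is $\Po(\mu)$ with $\mu = O((\log n)^{1+\xi/2})$. Cramér's bound for Poisson tails,
\begin{equation*}
    \p\bigl( \Po(\mu) \ge t \bigr) \le \exp\bigl(-t \log(t/\mu) + t - \mu\bigr) \qquad (t > \mu),
\end{equation*}
applied with $t$ of order $(\log n)^{2+\xi}$ gives $\log(t/\mu) = \Omega(\log \log n)$, so a single cell overflows with probability at most $\exp(-\Omega((\log n)^{2+\xi} \log \log n))$. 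A union bound over the $O(n)$ cells preserves this estimate, which is far smaller than $\p(A^c)$; combining yields $\p(\Delta_\ppp \ge (\log n)^{2+\xi}) \le \exp(-(\log n)^{1+\xi/4})$ for $n$ large. The main technical obstacle is precisely the boundary case: Delaunay edges whose associated circumcentres lie outside $\cD_n$ need the short projection argument above, exploiting the smoothness of $\cD$, to translate Lemma~\ref{lem:largest_empty_disc} into an edge-length bound. The rest is a routine Poisson tail estimate.
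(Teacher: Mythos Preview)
Your argument works fine for points at distance at least $\sqrt{\log n}$ from $\partial\cD_n$, and indeed this is essentially what the paper does in Lemma~\ref{lem:degree_away_from_border}. The gap is in the boundary case, where the projection argument does not deliver what you claim.

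Concretely: take a convex-hull edge $xy$ of $\DT(\ppp)$. The witnessing empty disc can have arbitrarily large radius, with its centre arbitrarily far outside $\cD_n$; the intersection $b(c,R)\cap\cD_n$ is then a thin lune whose area is governed not by $R$ but by the sagitta between the chord and $\partial\cD_n$. Projecting $c$ to $p\in\partial\cD_n$ gives only $b(p,R-|cp|)\subset b(c,R)$, and $R-|cp|$ can be tiny even when $|xy|$ is large. In a disc of area $n$, convex-hull edges have typical length of order $n^{1/6}$ (the cap between chord and boundary has area $\sim s^3/\rho$, so emptiness forces $s^3=O(\rho\log n)$), which is far larger than your $r_0=(\log n)^{1/2+\xi/4}$. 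Thus the assertion ``on $A$ every Delaunay neighbour of $x$ lies in $b(x,Cr_0)$'' is false for boundary vertices, and so is the degree bound that you derive from it via point counting. The smoothness of $\cD$ controls how much of a \emph{centred} ball lies inside $\cD_n$ (this is what Lemma~\ref{lem:largest_empty_disc} uses), but it does not prevent long Delaunay edges hugging the boundary.

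The paper circumvents this by treating points within $\sqrt{\log n}$ of $\partial\cD_n$ separately (Lemma~\ref{lem:close_to_border}). Around such a point $x$ it builds a fan of circular sectors $S_0,S_1,\dots$, each of area $\log^{1+\xi}n$ but with geometrically decreasing radii, the first one reaching all the way to $\partial\cD_n$. Conditioning on each sector containing at least one site forces every Delaunay neighbour of $x$ to lie in the union of slightly enlarged sectors, a region whose total area is only polylogarithmic even though individual edges may be long. This sector construction is the missing idea in your proposal.
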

Define $\|Ax\| := \inf \left\{ \|xy\| : y \in A \right\}$. Our proof will follow
by considering two cases. For the first case we consider all points $x\in\ppp$
satisfying $\|\partial\cD_n x\| \leq \sqrt{\log n}$ and bound the number of
neighbours of $x$ in $\DT(\ppp)$ to one side of $x$; doubling the result
and the end for the final bound.
To begin, we trace a ray from $x$ to the point $y\in\partial\cD_n$ minimising
$\|xy\|$; we refer to this as $\cR_0$. Next, we create a new ray, $\cR_1$
exiting $x$ such that the area enclosed
by $\cR_0$, $\cR_1$ and $\partial \cD_n$
is $\log^{1+\xi} n$; we let $S_0$ denote this region.
(See Figure~\ref{fig:degree}.) 
For $n$ large enough, the
angle between  the rays $\cR_0$ and $\cR_1$ {\red is} smaller than
$\frac{\pi}{2}+\frac{\pi}{12}$. 
In addition, the length of $\cR_1$ is upper
bounded by the diameter of $\cD_n\leq c_2\sqrt{n}$ and lower bounded by
$c_1\sqrt{\log n}$ for $c_1=\frac{1}{2}$ since $\cR_0$ is not longer than
$\sqrt{\log n}$ by hypothesis.

\begin{figure}[h]
    \begin{center}
        \includegraphics[scale=0.75, page=1]{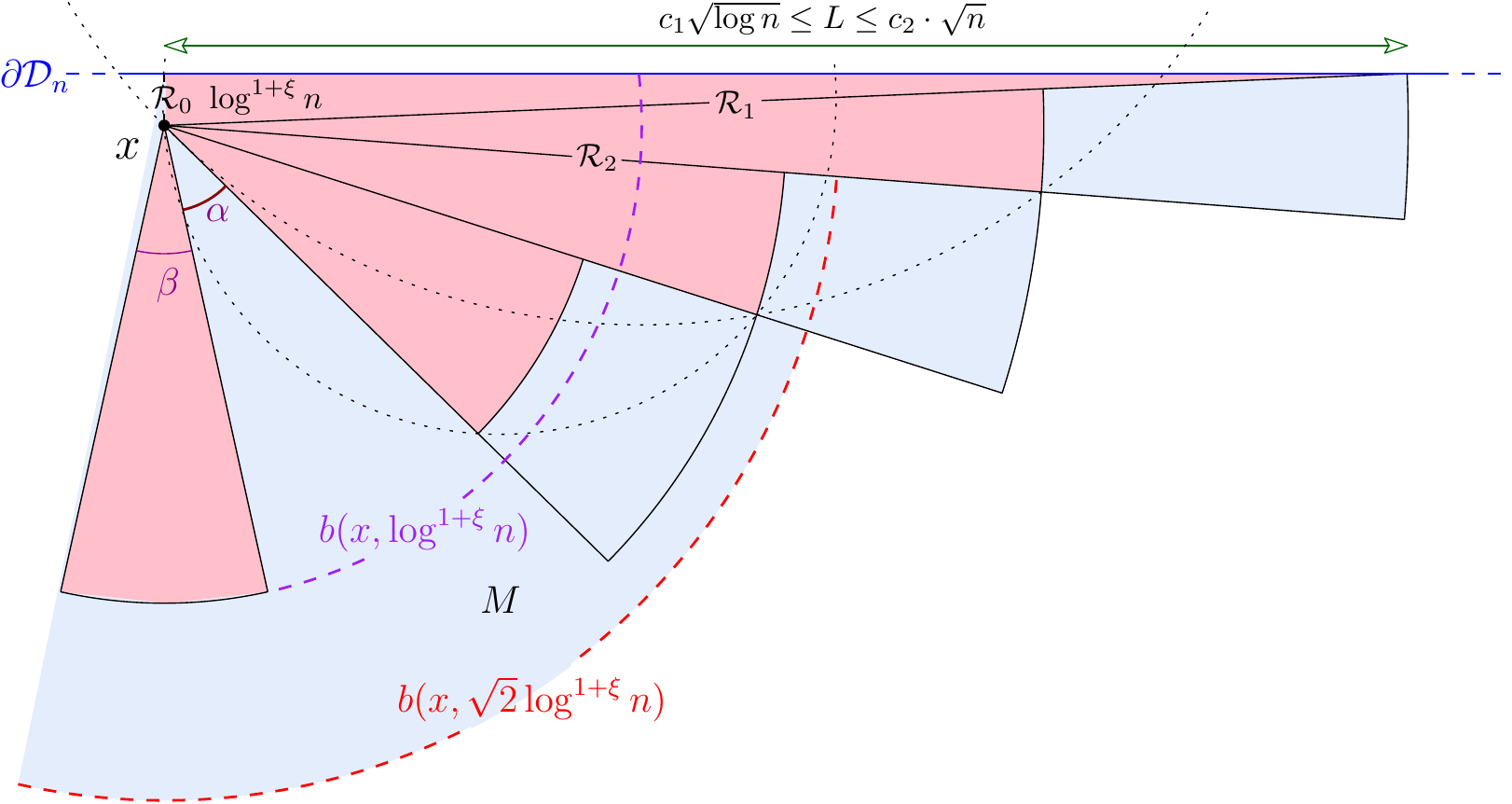}
    \end{center}
    \caption{ 
      \label{fig:degree} The construction to bound the maximum degree of 
      a vertex near the border. The pink shaded circular sectors are each
      conditioned to contain at least one point. In this case, no Delaunay
      neighbour to the right of $x$ can lie outside of the 
    }
\end{figure}

We then define an iterative process that finishes as soon as one of our
regions (that we define shortly) is totally contained within a ball of radius
$\log^{1+\xi} n$ about the point $x$. At each step $i$, we look at the ray
$\cR_{i-1}$ and then grow a sector of a circle with area $\log^{1+\xi}n$, and of
radius $\tfrac{1}{\sqrt{2}} |\cR_{i-1}|$, where $|\cR_i|$ is the length of the
$i$\textit{th} ray. The sector of radius $|\cR_{i+1}|$ defined by the rays 
$\cR_i$ and $\cR_{i+1}$ is denoted by $S_i$. 
Thus the internal angle of each new sector is exactly twice
that of the previous one. For each sector apart from the first, we add a
`border' (shaded blue in Figure~\ref{fig:degree}) which extends the each sector
to the length of the sector proceeding it: let $Q_i$ be the cone delimited by the 
rays $\cR_i$ and $\cR_{i+1}$, and of radius $|\cR_i|$. 
Let $I$ be the index of the first ray $\cR_i$ for which 
$|\cR_i|\le \sqrt 2 \log^{1+\xi} n$, so that the last of this decreasing sequence 
of sectors is $S_{I-1}$.
Finally, we add a sector $S_{I}$ directly
opposite $\cR_0$ within a ball of radius $\log^{1+\xi} n$. We choose its
internal angle $\beta$ so that its area is $\log^{1+\xi} n$. 

We now proceed to showing that each circular sector enclosed by two
adjacent rays contains a point with high probability and that when this event
occurs, we have a bound on the region of points that may be a neighbour to $x$ 
in $\DT(\ppp)$. 

\begin{lemma} \label{lem:alpha_lower}
    Let $\alpha$ be the angle between the final ray $\cR_I$ and the edge of the 
    sector $S_{I+1}$ opposite $\cR_0$. Then, for $n$ large enough, 
    $\alpha$ is positive. This implies that the sectors $S_{i}$, $0\le i \le I$
    are disjoint.
\end{lemma}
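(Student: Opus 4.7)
The plan is to bound $\alpha$ by a direct angular computation, exploiting the fact that the angles of the shrinking sectors form a geometric progression of ratio $2$. First I would establish the recursion: for $i\geq 1$ the sector $S_i$ is a circular sector of area $\log^{1+\xi} n$ and radius $|\cR_{i+1}|=|\cR_1|/(\sqrt{2})^{i}$, so its internal angle $\theta_i = 2\log^{1+\xi} n / |\cR_{i+1}|^2$ satisfies $\theta_i = 2\theta_{i-1}$. The stopping rule $|\cR_I|\leq \sqrt{2}\log^{1+\xi}n$ together with $|\cR_{I-1}|>\sqrt{2}\log^{1+\xi}n$ forces $|\cR_I|>\log^{1+\xi} n$, so the final angle obeys $\theta_{I-1}\leq 2/\log^{1+\xi} n = o(1)$.

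Next I would sum the angular extent on the side of $\cR_0$ occupied by the shrinking sectors. The doubling relation yields a geometric tail
\begin{equation*}
\sum_{i=1}^{I-1}\theta_i = \theta_{I-1}\left(1+\tfrac{1}{2}+\cdots+\tfrac{1}{2^{I-2}}\right) < 2\theta_{I-1}\leq\frac{4}{\log^{1+\xi} n}.
\end{equation*}
Combined with the bound $\theta_0 < \pi/2+\pi/12=7\pi/12$ already established in the text (from $|\cR_0|\leq\sqrt{\log n}$ together with the construction of $\cR_1$), the total angular sweep from $\cR_0$ to $\cR_I$ on that side is at most $7\pi/12+o(1)$.

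Finally I would compare this to the angular position of $S_I$. Since $S_I$ is a circular sector of area $\log^{1+\xi} n$ and radius $\log^{1+\xi} n$, its internal angle is $\beta = 2/\log^{1+\xi} n=o(1)$; being centred directly opposite $\cR_0$, its near edge sits at angle $\pi-\beta/2$ from $\cR_0$. Subtracting the cumulative sweep gives
\begin{equation*}
\alpha \geq \pi - \tfrac{\beta}{2} - \tfrac{7\pi}{12} - \tfrac{4}{\log^{1+\xi} n} = \tfrac{5\pi}{12} - o(1),
\end{equation*}
which is strictly positive for $n$ large enough. Disjointness of $S_0,\dots,S_I$ is then immediate: $S_0,\dots,S_{I-1}$ tile consecutive angular slices by construction, and the strict positivity of $\alpha$ together with the opposite-side placement of $S_I$ separates the final sector from all preceding ones. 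There is no substantive obstacle here; the only care required is in the indexing conventions and in the exceptional status of $S_0$ (whose outer boundary lies on $\partial\cD_n$ rather than on a circular arc), and the latter is already absorbed into the text's $7\pi/12$ bound.
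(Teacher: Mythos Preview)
Your proof is correct and follows essentially the same approach as the paper: both bound the largest sector angle via the stopping rule, sum the doubling angles as a geometric series to get $O(\log^{-(1+\xi)} n)$ total sweep from $\cR_1$ to $\cR_I$, combine with the $7\pi/12$ bound on the angle $\cR_0\cR_1$, and subtract from $\pi-\beta/2$ to obtain $\alpha \ge 5\pi/12 - o(1)$. Your write-up is in fact slightly cleaner than the paper's, which contains a stray factor of $\pi$ in the bound $2\beta = 4\pi\log^{-(1+\xi)} n$ and some indexing inconsistencies, but the argument is the same.
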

\begin{proof}
No angle between any two rays may exceed $2\beta=4\pi\log^{-(1+\xi)} n$ since
the area of a every sector is $\log^{1+\xi} n$, and the minimum circular radius
of a sector is $\tfrac{1}{\sqrt{2}}\log^{1+\xi} n$. Also, the angle between the
last ray $\cR_{I}$ and $\cR_1$ is smaller than 
$$\sum_{j=0}^{\infty}\frac{2\beta}{2^j}\leq 4\beta.$$ 
Since for $n$ large enough, the angle $\cR_0\cR_1$ is smaller than
$\tfrac{\pi}{2}+\tfrac{\pi}{12}$ we obtain $\alpha\ge 
\tfrac{\pi}{2}-\tfrac{\pi}{12}-\tfrac{\beta}{2}-4\beta$
which is positive for $n$ large enough.
\end{proof}

Let $M\subset\reals^2$ be defined by 
$$M:= b(x,\sqrt 2 \log^{1+\xi} n) \cup \bigcup_{i=1}^I Q_i,$$
that is the union of all shaded regions in Figure~\ref{fig:degree} (pink or blue). 

\begin{lemma} \label{lem:max_degree_points}    
    Suppose that every for every $0\le i\le I+1$, we have $|\ppp \cap S_i|>0$,
    and that the domain is rotated so that $\cR_0$ is in exactly in the
    direction of the $y$-axis. Then every neighbour of $x$ in $\DT(\ppp)$ having
    positive $x$-coordinate of must lie in $M$.
\end{lemma}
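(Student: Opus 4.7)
The plan is to argue the contrapositive via the empty-circle characterisation of the Delaunay triangulation: assuming $y$ has positive $x$-coordinate and $y\notin M$, I will show that every closed disc through $x$ and $y$ contains a point of $\ppp\setminus\{x,y\}$, so $y$ cannot be a Delaunay neighbour of $x$. The central geometric tool is the following elementary fact: if $z$ satisfies $\angle zxy\le\pi/4$ and $\|xz\|\le\|xy\|/\sqrt{2}$, then $\|xz\|\le\|xy\|\cos(\angle zxy)$, which is precisely the condition for $z$ to lie in the closed disc with $xy$ as a diameter. A direct computation on the one-parameter family of discs through $x,y$ then shows that such a witness lying above (resp.\ below) the line $xy$ in fact lies in every disc through $x,y$ whose centre sits weakly above (resp.\ below) $xy$; so producing one witness on each side of $xy$ blocks every disc at once.

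The bulk of the argument is a case analysis on the sector whose angular range contains $\overrightarrow{xy}$, which exists by Lemma~\ref{lem:alpha_lower}. For a generic interior sector $1\le i\le I-1$: the assumption $y\notin Q_i\subset M$ forces $\|xy\|>|\cR_i|=\sqrt{2}|\cR_{i+1}|$; the hypothesised point $z_i\in\ppp\cap S_i$ satisfies $\|xz_i\|\le|\cR_{i+1}|<\|xy\|/\sqrt{2}$; and the aperture $\theta_i=4\log^{1+\xi}n/|\cR_i|^2$ is far below $\pi/4$ thanks to $|\cR_i|\ge|\cR_{I-1}|>\sqrt{2}\log^{1+\xi}n$, so $z_i$ lies in the diameter disc of $xy$. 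The second witness, on the opposite side of $xy$, is extracted from the adjacent sector $S_{i-1}$ or $S_{i+1}$: the same radial/angular estimates, together with the fact that the relative angular offset of neighbouring sectors is still $O(\theta_i)$ and that $\|xy\|$ exceeds both $|\cR_{i\pm 1}|$'s relevant bound, place the adjacent point in the diameter disc as well. The extremal sector $i=I$ is handled analogously using $y\notin b(x,\sqrt{2}\log^{1+\xi}n)$, $S_I\subset b(x,\log^{1+\xi}n)$, and $\beta=2/\log^{1+\xi}n$.

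The main obstacle is the boundary sector $i=0$, whose angular aperture can reach $\pi/2+\pi/12$ and therefore defeats the one-shot diameter-disc criterion. My plan there is to exploit that $S_0$ is bounded by $\partial\cD_n$: any $y\in\cD_n$ whose direction lies in $S_0$'s angular range must itself lie in $S_0$, and combining this with $y\notin b(x,\sqrt{2}\log^{1+\xi}n)$ confines $y$ to the far, thin, boundary-hugging portion of $S_0$. I will then subdivide $S_0$ into $O(1)$ sub-cones of aperture at most $\pi/4$ around $x$ — presumably the role played by the extra sector in the hypothesis range ``$0\le i\le I+1$'' — each of which still houses a point of $\ppp$, and apply the previous diameter-disc argument within each sub-cone. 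Setting up this boundary-aware partition and verifying that witnesses on both sides of $xy$ are always available in this regime is where the remaining technical work sits; once this is in place, the three cases combine to give $y\in M$ whenever $y$ is a Delaunay neighbour of $x$ with positive $x$-coordinate.
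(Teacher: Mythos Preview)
Your approach diverges substantially from the paper's and carries two genuine gaps.

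The paper's argument is a single stroke: for $y\notin M$ with positive $x$-coordinate, it asserts (relying on the figure) that \emph{every} disc through $x$ and $y$ must entirely contain one of the pink sectors $S_i$, $0\le i\le I$. Since each $S_i$ is assumed nonempty, no such disc is empty and $y$ cannot be a Delaunay neighbour. There is no case split on the direction of $\overrightarrow{xy}$, no diameter-disc computation, and no need to manufacture a witness on each side of the chord.

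Your two-witness strategy, while a sound general technique, breaks down here in two places.

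\emph{The upper witness from $S_{i-1}$ need not lie in the diameter disc.} From $y\notin Q_i$ you get only $\|xy\|>|\cR_i|$. But $z_{i-1}\in S_{i-1}$ can sit at distance exactly $|\cR_i|$ from $x$, while $\angle z_{i-1}xy$ can be as large as $\theta_{i-1}+\theta_i>0$. The exact containment criterion $\|xz\|\le\|xy\|\cos(\angle zxy)$ then demands $\|xy\|\ge|\cR_i|/\cos(\theta_{i-1}+\theta_i)$, strictly more than $\|xy\|>|\cR_i|$, and nothing in $y\notin M$ supplies that slack. Your phrase ``$\|xy\|$ exceeds both $|\cR_{i\pm1}|$'s relevant bound'' papers over this; for the $i-1$ side the ``relevant bound'' is $|\cR_i|$, and the strict inequality you have is not enough once any positive angle enters the cosine.

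\emph{The plan for $S_0$ is not supported by the hypothesis.} You propose to subdivide $S_0$ into $O(1)$ sub-cones of aperture at most $\pi/4$, each containing a point of $\ppp$, and guess that this is what the extra index in ``$0\le i\le I+1$'' is for. It is not: the additional sector in the construction is $S_I$, placed directly opposite $\cR_0$ near the negative $y$-axis, not a refinement of $S_0$. The hypothesis gives you one point somewhere in $S_0$, nothing more; a subdivision argument cannot be run from that.

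The paper's whole-sector-containment claim sidesteps both problems simultaneously: the location of the point within its sector is irrelevant, and $S_0$ is handled on equal footing with the others because the $Q_i$'s and the ball $b(x,\sqrt{2}\log^{1+\xi}n)$ were engineered precisely so that any disc through $x$ and a point escaping $M$ must swallow one full pink sector. You would be better served arguing that containment directly rather than chasing individual witnesses.
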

\begin{proof}
    Any neighbour $y$ of $x$ in $\DT(\ppp)$ must have a circle not containing
    any point of $\ppp$ that touches both $x$ and $y$. Let $y\notin M$. Since
    $y$ has positive $x$-coordinate, it is easy to see that any circle touching
    both $x$ and $y$ must also fully contain one of the sectors $S_i$, $0\le
    i\le I$, in pink in Figure~\ref{fig:degree} (see dotted circles in
    Figure~\ref{fig:degree}). By assumption, each $S_i$, $0\le i\le I$ contains
    at least a site of $\ppp$, so no circle touching both $x$ and $y$ can be
    empty, and $y$ cannot be a neighbour of $x$ in $\DT(\ppp)$.
\end{proof}
\begin{lemma}\label{lem:close_to_border}
    For any $\xi> 0$,
    $$
      \p\bigg(
        \max\bigg\{ 
          \delta_\ppp(x) 
          \;\big|\;
          x\in\ppp, \; \|\partial\cD_n\;x\| \leq \sqrt{\log n} 
        \bigg\}
        \geq 
        \log^{2+3\xi} n
      \bigg)
      \leq
      2\expo{-\log^{1+\xi} n},
    $$
    for $n$ large enough.        
\end{lemma}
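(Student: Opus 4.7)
The plan is to combine the geometric construction preceding Lemma~\ref{lem:max_degree_points} with a union bound over all $x \in \ppp$ close to the boundary. Fix such an $x$ with $\|\partial\cD_n\,x\|\le \sqrt{\log n}$. First I would verify that the sector construction terminates after $I+2=O(\log n)$ steps: since $|\cR_1|\le \operatorname{diam}(\cD_n)\le c_2\sqrt n$ and the radii satisfy $|\cR_i|=\tfrac{1}{\sqrt 2}|\cR_{i-1}|$ for $i\ge 2$, the process stops as soon as $|\cR_I|\le \sqrt 2 \log^{1+\xi} n$, giving $I=O(\log n)$. By Lemma~\ref{lem:alpha_lower}, the sectors $S_0,\dots,S_{I+1}$ are pairwise disjoint, each of area exactly $\log^{1+\xi} n$.

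Next, using the independence of a Poisson process on disjoint sets, for each $i$ one has $\p(S_i\cap \ppp=\varnothing)=\expo{-\log^{1+\xi} n}$, so a union bound gives that the probability that some $S_i$ is empty is at most $(I+2)\expo{-\log^{1+\xi} n}=O(\log n)\expo{-\log^{1+\xi} n}$. On the complementary event, Lemma~\ref{lem:max_degree_points} implies that every Delaunay neighbour of $x$ with positive $x$-coordinate lies in $M=b(x,\sqrt 2 \log^{1+\xi} n)\cup \bigcup_{i=1}^I Q_i$. A short computation shows that each $Q_i$ has area $2\log^{1+\xi} n$ (its angle $\theta_i = 2\log^{1+\xi} n/|\cR_{i+1}|^2$ times $\tfrac 12 |\cR_i|^2=|\cR_{i+1}|^2$ yields $2\log^{1+\xi} n$), while the outer ball has area $2\pi\log^{2+2\xi} n$; summing, $\area(M)=O(\log^{2+2\xi} n)$. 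Applying the symmetric construction on the other side of $\cR_0$ yields an analogous region $M'$ with the same bound, so on the good event $\delta_\ppp(x)\le |\ppp\cap(M\cup M')|$.

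Then I would apply a standard Poisson tail bound (e.g.\ $\p(\Po(\lambda)\ge t)\le \expo{-t/3}$ for $t\ge 2\lambda$) to obtain $\p(|\ppp\cap (M\cup M')|\ge \log^{2+3\xi} n)\le \expo{-c\log^{2+3\xi} n}$ for some constant $c>0$, which is negligible compared to $\expo{-\log^{1+\xi} n}$. Thus for each fixed $x$ near the border, the probability of a bad event is at most $O(\log n)\expo{-\log^{1+\xi} n}$. To pass to the union over all points of $\ppp$ near the boundary, I would use the Campbell--Mecke formula: the near-border strip has area $O(\sqrt{n\log n})$, so the expected number of bad points is at most $O(\sqrt{n\log n}\cdot \log n)\expo{-\log^{1+\xi} n}$. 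Since any polynomial in $n$ is absorbed into the super-polynomial decay, Markov's inequality finishes the proof with the claimed bound $2\expo{-\log^{1+\xi} n}$ for $n$ large enough.

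The main obstacle is really already contained in Lemmas~\ref{lem:alpha_lower} and~\ref{lem:max_degree_points}, which carry the geometric content. The remaining work for this lemma is careful bookkeeping of the polylogarithmic factors coming from the number of sectors, the areas of the cones $Q_i$, and the Poisson tail estimates, together with checking that the sector construction actually fits inside $\cD_n$---which is exactly what the hypothesis $\|\partial\cD_n\,x\|\le \sqrt{\log n}$ guarantees, since it makes $\cR_0$ short enough for the subsequent rays $\cR_i$ of geometrically decreasing length to remain in the domain.
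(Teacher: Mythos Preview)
Your proposal is correct and follows essentially the same approach as the paper: bound the number of sectors, show each is nonempty with high probability so that Lemma~\ref{lem:max_degree_points} confines the neighbours to $M$, bound the number of points in $M$ via a Poisson tail, and take a union bound over points near the boundary. The only differences are cosmetic bookkeeping choices---you bound $I=O(\log n)$ via the geometric decay of radii and control $|\ppp\cap M|$ through a single area estimate, whereas the paper bounds the number of sectors angularly and controls each sector's point count separately before summing---but the argument is the same.
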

\begin{proof}
  Let $X_0$ be chosen uniformly at random among the points of $\ppp$
  within distance $\sqrt{\log n}$ of $\partial \cD_n$. Note that if there is 
  no such point, then 
  $$\max \big\{ \delta_\ppp(x) : 
  x\in\ppp, \; \|\partial\cD_n\;x\| \leq \sqrt{\log n} \big\} =0.
  $$
  Let $A_{X_0}$ be the event that no (pink) sector $S_i$, $0\le i\le I$
  about $X_0$ is empty and let $B_{X_0}$ be the event that no sector
  $S_i$, $0\le i \le I$  about $X_0$ contains more than $\log^{1+2\xi} n$ points (see
  Figure~\ref{fig:degree}). Given that the number of sectors $I+2$ about $X_0$ is
  deterministically bounded by $\tfrac{\pi}{6}\log^{1+\xi}n\leq n$ (for $n$
  large enough), we have by the union bound that the probability that
  $$\p(A_{X_0}^c) \le n\cdot\exp\big(-\log^{1+\xi}\big)
  \qquad \text{and}\qquad
  \p(B_{X_0}^c) \le n\cdot\exp\big(-\log^{1+2\xi} n\big).$$
  Conditional on $A_{X_0}$ and $B_{X_0}$ occurring, we may count the
  number of points that could possibly be Delaunay neighbours of $X_0$. This
  includes all points in the at most $\tfrac{\pi}{6}\log^{1+\xi}n$ sectors, each
  containing at most $\log^{1+2\xi} n$ points (conditional on $B_{X_0}$). We
  also add all the points not contained within any sector, but lying within the
  circle of radius $\sqrt{2}\log^{1+\xi}$ about $X_0$ (shaded blue in
  Figure~\ref{fig:degree}). Standard arguments give that this region contains no
  more than $2\pi\log^{2+3\xi} n$ points with probability bounded at most
  $\expo{-\log^{2+3\xi}}$. Putting these together and applying the union bound
  we have
  \begin{align*}   
    & \p(\max \big\{ \delta_\ppp(x) : 
  x\in\ppp, \; \|\partial\cD_n\;x\| \leq \sqrt{\log n} \big\} > \log^{2+3\xi} n)\\
    & \le 2n\cdot\p\big(\delta_\ppp(X_0) 
    > 
    \log^{2+3\xi} \big) + \p\big(|\ppp| > 2n\big) \\
    & \leq
    2\exp\big(-\log^{1+\xi}\big),
  \end{align*}
  for $n$ sufficiently large.
\end{proof}
The second case of our proof is much simpler, since it suffices to bound the
maximum distance between any two Delaunay neighbours, and then count the maximum
number of points falling within this region.
\begin{lemma} \label{lem:degree_away_from_border}
    For any $\xi> 0$,
    $$
      \p\bigg(
        \max\bigg\{ 
          \delta_\ppp(x) 
          \;\big|\;
          x\in\ppp, \; \|\partial\cD_n\;x\| > \sqrt{\log n} 
        \bigg\}
        \geq 
        \log^{2+\xi} n
      \bigg)
      \leq
      \expo{-\log^{1+\xi/3} n},
    $$
    for $n$ large enough.    
\end{lemma}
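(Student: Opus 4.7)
The plan is to combine a largest-empty-disc bound with a direct Poisson count. When $x$ is far from the border, any Delaunay edge out of $x$ must be short, because the defining empty circle would otherwise have to contain a large empty subdisc entirely inside $\cD_n$, contradicting Lemma~\ref{lem:largest_empty_disc}. Once edges out of $x$ are known to be short, the degree is controlled by the number of Poisson points in a small ball around $x$, which is easily estimated.

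First, I would apply Lemma~\ref{lem:largest_empty_disc} with $\omega_n = \log n$ and a small $\xi' > 0$ (to be chosen as a function of $\xi$) to conclude that, with probability at least $1 - \expo{-\log^{1+\xi'} n}$, no closed ball in $\cD_n$ of radius $r_n := c \log^{1/2 + \xi'} n$ is disjoint from $\ppp$, for a suitable constant $c$. Call this event $\mathcal{E}_n$.

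Next, on $\mathcal{E}_n$, I would show that every Delaunay neighbour $y$ of a point $x \in \ppp$ with $d := \|\partial \cD_n \, x\| > \sqrt{\log n}$ satisfies $\|xy\| \le 2 r_n$. If $B$ is an empty open disc through $x$ and $y$ with radius $R \ge \|xy\|/2$, and $\vec v$ is the unit vector from $x$ to the centre of $B$, then for any $t$ the disc $b(x + t\vec v, t)$ lies in $B$ whenever $t \le R$ and in $b(x, 2t) \subset \cD_n$ whenever $t \le d/2$. Taking $t = \min(R, d/2)$ therefore produces an empty disc of this radius inside $\cD_n$; on $\mathcal{E}_n$ the radius must be strictly less than $r_n$, so provided $d \ge 2 r_n$ one concludes $R < r_n$ and hence $\|xy\| < 2 r_n$. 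The narrow intermediate band $\sqrt{\log n} < d < 2 r_n$ where this argument does not directly apply is absorbed by a cosmetic extension of the close-to-border construction of Lemma~\ref{lem:close_to_border} so that it covers all points within distance $2 r_n$ of $\partial \cD_n$, at the cost only of adjusting constants in that earlier lemma (its probability bound degrades by at most a factor polynomial in $\log n$, which is harmless).

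Conditional on $\mathcal{E}_n$, one then has $\delta_\ppp(x) \le |\ppp \cap b(x, 2 r_n)|$, a Poisson random variable of mean $O(r_n^2) = O(\log^{1+2\xi'} n)$. Standard Poisson large-deviation bounds give that it exceeds $\log^{2+\xi} n$ with probability of order $\expo{-\log^{2+\xi} n \cdot \log \log n}$ as soon as $\xi' < \xi/2$. A union bound over the (at most $2n$ with overwhelming probability) points of $\ppp$, combined with $\p(\mathcal{E}_n^c) \le \expo{-\log^{1+\xi'} n}$, yields the claim for $\xi' = \xi/3$. The main obstacle is the geometric step: the threshold $\sqrt{\log n}$ in the statement is slightly weaker than the $2 r_n$ really needed for the clean empty-disc contradiction, requiring the small extension of Lemma~\ref{lem:close_to_border} indicated above. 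The counting and union-bound steps are then routine.
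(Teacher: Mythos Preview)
Your approach is essentially the paper's: bound the length of Delaunay edges out of a far-from-boundary point via the empty-circumcircle property together with Lemma~\ref{lem:largest_empty_disc}, then control the degree by a Poisson count in the resulting small ball about $x$, and finish with a union bound over $x\in\ppp$. The paper carries out exactly these three steps (using a grid partition for the count rather than a direct Poisson tail, which is immaterial).

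The one substantive difference is in the geometric step. The paper invokes, by reference to the proof of Lemma~\ref{lem:bound_Yw} and without giving details, the sharper estimate that an edge of length $\ell$ from a point at distance $d$ from $\partial\cD_n$ forces an empty region inside $\cD_n$ of area $\Omega(d\cdot\ell)$; with $d>\sqrt{\log n}$ and $\ell=\tfrac12\log^{1/2+\xi}n$ this already gives area $\Omega(\log^{1+\xi}n)$, so the threshold $\sqrt{\log n}$ is handled directly. Your explicit $\min(R,d/2)$-disc gives only area $\Omega(d^2)$, which is why you must reroute the band $\sqrt{\log n}<d<2r_n$ through an extension of the sector construction of Lemma~\ref{lem:close_to_border}. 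That workaround is legitimate (the construction there only uses that $|\cR_0|$ is polylogarithmic, and the degree bound it produces can be tuned to $\log^{2+\xi}n$ by running it with parameter $\xi/3$), so the argument closes; but it does mean you are proving the lemma by borrowing the machinery of its companion. The paper's route is shorter but leans on a geometric area bound it never proves, while yours is more self-contained at the cost of this detour.
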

\begin{proof}
  Let $X_0\in\ppp$ be chosen uniformly at random among the points of $\ppp$ 
  at distance more than $\sqrt{\log n}$ from $\partial \cD_n$. Again, note that 
  if there is no such point, 
  $$\max \big\{ \delta_\ppp(x) : 
  x\in\ppp, \; \|\partial\cD_n\;x\| > \sqrt{\log n} \big\} =0.
  $$
  If $X_0$ is well-defined, any neighbour of $X_0$ in $\DT(\ppp)$ outside of 
  the ball $b(X_0, 1/2 \log^{1/2+\xi} n)$
  implies the existence of large region of $\cD_n$ that is empty of points of $\ppp$. 
  By adapting the proof of Lemma~\ref{lem:bound_Yw} and using 
  Lemma~\ref{lem:largest_empty_disc}, the probability that such a neighbour
  exists may be bounded by $\expo{-\log^{1+\xi} n}$. We omit
  the details.

  We now upper bound the number of points that may fall within this ball. Split
  the domain into a regular grid with cells of side length 
  $\frac{1}{2}\log^{1/2+\xi} n$. The
  probability that any of these grid cells contains more than $\log^{2+\xi/3}$
  points is bounded by $\expo{-\log^{2+2\xi} n}$ for large $n$, and our ball may
  intersect at most four of these, so the degree of $X_0$ in $\DT(\ppp)$ is bounded by
  $4\log^{2+2\xi} n$ with probability $2\expo{-\log^{1+\xi} n}$ in this case.
  The result follows from the union bound, 
  just as in Lemma~\ref{lem:close_to_border}.
\end{proof}

\paragraph{Acknowledgments} We would like to thank 
Marc Glisse, Mordecai Golin, Jean-François Marckert, and Andrea Sportiello 
 for fruitful discussions during the Presage workshop 
on geometry and probability.

\bibliography{biblio}
\bibliographystyle{abbrvnaturl}

\end{document}